\newcommand{\8}{\infty}
\renewcommand{\d}{\delta}
\renewcommand{\a}{\alpha}
\renewcommand{\b}{\beta}
\newcommand{\eps}{\varepsilon}
\newcommand{\ov}{\overline}
\newcommand{\is}[2]{\langle #1,#2\rangle}
\newtheorem{theorem}[equation]{Theorem}
\newtheorem{corollary}[equation]{Corollary}
\newtheorem{lem}[equation]{Lemma}
\newtheorem{lemma}[equation]{Lemma}
\newtheorem{prop}[equation]{Proposition}
\newtheorem{proposition}[equation]{Proposition}
\theoremstyle{definition}
\newtheorem{rem}{Remark}[section]
\newtheorem{remark}{Remark}[section]
\numberwithin{equation}{section}
\newcommand{\Cov}{\mathbb{K}}
\newcommand{\E}{\mathbb{E}}
\newcommand{\Prob}{\mathbb{P}}
\renewcommand{\P}[2][]{\ensuremath{\mathbb{P}_{#1} \left( {#2} \right)}}
\newcommand{\Pkern}[1][]{\ensuremath{{}^{ #1}\!P}}
\newcommand{\Pfs}{\ensuremath{\mathbb{P}\text{-a.s.}}}
\newcommand{\Erw}[2][]{\ensuremath{\mathbb{E}_{#1} \left( {#2} \right)}}
\newcommand{\N}{\mathbb{N}}
\newcommand{\Z}{\mathbb{Z}}
\newcommand{\R}{\mathbb{R}}
\newcommand{\C}{\mathbb{C}}
\newcommand{\Var}{Var}
\renewcommand{\S}{\mathcal{S}}
\newcommand{\supp}{\mathrm{supp}}
\renewcommand{\epsilon}{\varepsilon}
\renewcommand{\rho}{\varrho}
\newcommand{\esl}[1]{\ensuremath{\left( #1 \right)^\sim}}
\newcommand{\1}[1][]{\mathbf{1}_{#1}}
\newcommand{\norm}[1]{\ensuremath{\left\| {#1} \right\|}}
\newcommand{\abs}[1]{\ensuremath{\left| {#1} \right|}}
\newcommand{\skalar}[1]{\langle #1 \rangle}
\newcommand{\eqdist}{\stackrel{d}{=}}
\newcommand{\Id}{\mathrm{Id}}
\newcommand{\mc}[1]{\mathcal{#1}}
\newcommand{\mf}[1]{\mathfrak{#1}}
\newcommand{\M}{\mf{M}}
\renewcommand{\S}{\mathcal{S}}
\begin{document}
\title[Tail behaviour of solutions of fixed
point equations]{Heavy tailed solutions of multivariate smoothing transforms}
\author[D. Buraczewski et al.]{Dariusz Buraczewski, Ewa Damek, Sebastian Mentemeier, Mariusz Mirek}

\thanks{{D.~Buraczewski and E.~Damek were partially supported by NCN grant
DEC-2012/05/B/ST1/00692.  S.~Mentemeier was supported by the Deutsche Forschungsgemeinschaft (SFB 878). M.~Mirek was  partially supported by MNiSW
grant  N N201 393937.}}

\maketitle

\begin{abstract} \ \\
Let $N > 1$ be a fixed integer and $(C_1, \dots, C_N,Q)$ a random element of
\makebox{ $M(d \times d, \R)^N \times \R^d$.} 
We consider solutions of multivariate smoothing transforms, i.e. random variables $R$ satisfying
$$R \eqdist \sum_{i=1}^N C_i R_i +Q $$
where $\eqdist$ denotes equality in distribution, and $R, R_1, \dots, R_N$ are independent identically distributed $\R^d$-valued random variables, and independent of $(C_1, \dots, C_N, Q)$. We briefly review conditions for the existence of solutions, and then study their asymptotic behaviour. We show that under natural conditions, these solutions exhibit heavy tails. Our results also cover the case of complex valued weights $(C_1, \dots, C_N)$.
\end{abstract}

\section{Introduction}



Let $N > 1$ be a fixed integer and $(C_1, \dots, C_N,Q)$ a random element of
{ $M(d \times d, \R)^N \times \R^d$.} This induces a mapping $\S : \M(\R^d) \to \M(\R^d)$
on the space of probability measures on $\R^d$, defined by
\begin{equation}\label{S}
\S \mu := \mathcal{L}\left( \sum_{i=1}^N C_i X_i +Q \right),
\end{equation}
where $X_1, \dots, X_N$ are independent identically distributed (iid) with
distribution $\mu$, independent of the \emph{weights and the immigration term} $(C_1, \dots, C_N, Q)$, and $\mathcal{L}$ denotes the law of $\sum_{i=1}^N C_i X_i + Q$.

 On a suitably chosen complete metric subspaces of
$\M(\R^d)$, $\S$ possesses  a unique fixed point.  In terms of
random variables, this means
\begin{equation}\label{SFPE}
R \eqdist \sum_{i=1}^N C_i R_i +Q,
\end{equation}
with $R, R_1, \dots, R_N$ iid and independent of $(C_1, \dots,
C_N,Q)$. Uniqueness of $R$ then means of course uniqueness of its
distribution. We will slightly abuse notation, and call the random
variable $R$ above a solution of the fixed point equation. In
fact, its distribution is a fixed point of $\S$.

We will also consider a particular case of \eqref{SFPE} when $Q=0$, i.e. we will study solutions of the equation
\begin{equation}\label{SFPE'}
R \eqdist \sum_{i=1}^N C_i R_i.
\end{equation}
It turns out that there are some subtle differences between those two cases. To distinguish between them we will call the stochastic equation \eqref{SFPE} inhomogeneous and \eqref{SFPE'} homogeneous.

\medskip

The case of nonnegative scalar weights, i.e. $C_i \in \R_+$, known
as the smoothing transform, has drawn much attention, see e.g. the
classical works \cite{DL1983,KP1976,Liu2000}, as well as
\cite{AM2010a} and the references therein. Also the case of
real-valued scalar weights has been studied, see \cite{CR2003} or
very recently in \cite{Meiners2012}.

The study of its multivariate analogue has a much shorter history,
but it draws more and more attention. We will focus on two
different kinds of assumptions, the first being $C_i$ from the set
of similarities, i.e.  products of orthogonal matrices and
dilations. This particularly covers the case of the smoothing
transform with complex valued weights. An equation of this type
was very recently studied in \cite{Chauvin2011}. Another
situation, where similarities appear as weights, is the joint
distribution of two statistics appearing in phylogenetic trees,
see \cite[Equation (14)]{Blum2006}; or the joint distribution of
key comparisons and key exchanges for Quicksort, see \cite[Theorem
4.1]{NR2006}. In all these  papers, only the case of
solutions with exponential moments has been studied. In particular
in the light of the article \cite{JO2010b}, it is tempting to
search for solutions with finite expectation, but heavy tails.

Secondly, we will treat { general matrices}, under some density assumptions on their distribution, as introduced in \cite{AM2010}. There, new properties, unknown from the one-dimensional case as well as prior multidimensional studies concerning positive solutions (\cite{BDG2011,Mirek2011a}), appear. We will discuss them by a nice example, which also allows us to (partially) answer a question raised by Neininger and R\"uschendorff \cite[Problem 3.2]{NR2006}.

{ A particular motivation for our work are equilibrium distributions of kinetic models. They can be studied as fixed points of smoothing transforms, as discussed for the one-dimensional case in the very recent article \cite{Bassetti2012+}. It is stated there \cite[Theorem 2.2 (iii, iv)]{Bassetti2012+} that these distributions show heavy tail behaviour in the sense that moments of high order are infinite. Our results (for similarities) then give precise tail asymptotics for these models. We will describe the basic ideas in subsection \ref{sect:example_mathes}, and derive heuristically a stochastic fixed point equation for the distribution of particle velocity in Maxwell gas. Naturally, this is a distribution on $\R^d$, and it was studied in \cite[Example 6.1]{Bassetti2011}, where it was shown to have heavy tails, \cite[6.1.4]{Bassetti2011}, but without precise tail index. Our results in the case of general matrices give these tail asympotics. The details are worked out in subsection \ref{sect:example_mathes}.    }

\subsection{What can be expected from the one-dimensional case?}

The main properties of the fixed points are governed by the
function $\hat{m}(s):= \E \sum_{i=1}^N C_i^s$, which is convex with
$\hat{m}(0)=N >1$, thus there are at most two solutions $0< \alpha < \beta$ to $\hat{m}(s)=1$.
It is shown in \cite[Theorem 8.1]{AM2010a}, that the solutions to \eqref{SFPE}
are of the form $$ X = R + hW^{1/\alpha}Y, $$
where $h \ge 0$  and $Y$ is a one-sided stable law
of index $\alpha$ independent of $(R,W)$, if $\alpha \in (0,1)$, and $Y=1$ if $\alpha=1$. The random
variables $(R,W)$ can be expressed in terms of the weighted branching process (WBP)
associated with $(C_1, \dots, C_N,Q)$. The WBP will be defined in Section \ref{sect:WBP}.
Note that in the recent preprint \cite{Meiners2012}, there is a similar result for real-valued weights, and vector valued solutions. This corresponds to diagonal matrices (which are of course similarities) in our setting.

If $h >0$, tails of the solutions are governed by $\alpha$. If $h=0$, then the
solution is given by $R$, and it is this special solution (called minimal
solution in \cite{AM2010a}), which we are interested in. It was first shown
by Guivarc'h \cite{Gui1990} (for the homogeneous case \eqref{SFPE'} and $\a=1$) that
given the existence of $\beta$, the solution $R$ has heavy tails with index
$\beta$, i.e.
\begin{equation}\label{heavy tails}
\lim_{t \to \infty} t^\beta \P{R > t} = K.
\end{equation} For the inhomogeneous equation, and also for the case
of real-valued weights, this was recently shown by { Jelenkovi{\'c}} and
Olvera-Cravioto \cite{JO2010a,JO2010b} (see also \cite{BDZ,BK}).

We extend their work to the multivariate case, and give in particular a nice
holomorphic argument that shows $K>0$, at least for similarities, in the general case it gives an equivalent condition.
The main part of this work is devoted to the study of the case of general regular matrices; most of the  proofs carry over
to the case of similarities, which will be treated separately.

\subsection{Acknowledgements}
The authors want to thank Gerold Alsmeyer and Jacek Zienkiewicz for helpful discussions during the preparation of the paper, and Daniel Matthes for information about kinetic models and pointing out reference \cite{Bassetti2011}. We are grateful to the referee for a very careful reading of the manuscript and many helpful suggestions improving the presentation.

\section{Statement of results}

\subsection{Notation and assumptions}
The Euclidean space $\R^d$ is endowed with the scalar product $\langle x,
y\rangle=\sum_{i=1}^{d}x_{i}y_{i}$ and the norm $|x|=\sqrt{\langle x, x\rangle}$. The unit sphere in $\R^d$ is denoted by $S$, and the projection of
a vector $x\in\R^d \setminus\{0\}$ on $S$ by $\esl{x}:= \abs{x}^{-1} x$.
Moreover,
on the space { $M(d\times d, \R)$ of $d \times
d$- real matrices } we will consider the operator norm $\|\cdot\|$ associated with the Euclidean norm $|\cdot|$
on $\R^d$, i.e. $\|a\|=\sup_{x\in S}|ax|$ for every $d \times
d$ matrix $a$.  The open ball of radius $\delta$ around a matrix $A$ w.r.t. to the operator norm is denoted by $B_\delta(A).$ The Lebesgue measure on $M(d \times d, \R)$, identified with $\R^{d^2}$, is denoted by $\lambda^{d \times d}$. We abbreviate $\R^+=(0, \infty)$.

The set of continuous functions $f : E \to \R$ on a metric space $E$ is denoted by $C(E)$, and by $C_C(E)$ we denote the compactly supported continuous functions $f : E \to \R$. The set of $k$-times (Frech\'et) differentiable functions is denoted by $C^k(E)$.

Let $I$ be a uniformly distributed random variable on the set $\{1, \dots, N\}$, independent of the random variable
$(C_1, \dots, C_N,Q)$. We introduce the random variable $C \eqdist C_I$, together
with a sequence $(C^{(n)})_{n \ge 0}$ of iid copies of $C$. Products of $C^{(1)},\ldots, C^{(n)}$ will be denoted by $\Pi_n := C^{(1)} \cdot\ldots\cdot C^{(n)}$.
We will see that the right multivariate expression for $\hat{m}$ is given by the
function
\begin{equation}\label{Def:m}
m(s) := N \lim_{n \to \infty} \left( \E \norm{\Pi_n}^s
\right)^\frac1n ,
\end{equation}
which is defined for $0 < s < s_\infty := \sup \{ s >0 \ : \ \E \norm{C}^s <
\infty \}$. Note that $s_\infty = \sup\{ s >0 \ : \ \max \E \norm{C_i}^s <
\infty \}.$ Again, $m$ is a convex function, and we define
\begin{align}
& \alpha := \inf \{ s > 0 : m(s) \le 1 \} \\
& \beta := \sup \{ s > 0 : m(s) \le 1 \} .
\end{align}
We will always assume that $\alpha < \beta < s_\infty$, then $m(\alpha)=m(\beta)=1$.{Though interesting examples with $\beta=s_\infty$ exist, in applications, $m$ usually has to be approximated by simulations, so it is convenient to introduce the standing assumption $\beta < s_\infty$ for it will also be needed in parts of the proofs.  }
Later on, we will see that $m$ is differentiable (at $\beta$), then $m'(\beta) >0$.

{ Applying the random variable $C_I$ one can easily prove (Proposition \ref{prop:permutation}) that \textbf{ $C_1,\ldots,C_N$ are identically distributed (but dependent), which we  w.l.o.g. assume from now on}. }

It may happen that the distribution of $R$ is degenerate. We  exclude this
case by the assumption
\begin{equation}
\text{For all } r \in \R^d, \quad \P{r = \sum_{k=1}^N C_k r + Q }<1. \label{no
trivial solution} \tag{\textsf{not triviality}}
\end{equation}

As soon as the first moments of $R$, $(C_1, \dots, C_N,Q)$ exist \eqref{SFPE}
yields an identity for the expectation of $R$. In some cases, we have to \emph{assume}
that this identity indeed has a solution, i.e. there is $r \in \R^d$ such that
\begin{equation}
 r = N \E C r + \E Q .
\label{ev}
\tag{\textsf{eigenvalue}}
\end{equation}
{ Moreover, if second moments exists, the covariance matrix $\Sigma$ of a solution $R$ to the homogeneous equation \eqref{SFPE'} has to satisfy
\begin{equation}
\label{covariance} \tag{\textsf{covariance}}\Sigma = N\ \E\big[ C \Sigma  C^\top \big].
\end{equation} }

\subsection{Existence and uniqueness results}
The following existence and uniqueness results are obtained from results of \cite{NR2004}. For the reader's convenience we give some ideas in Appendix \ref{sect:EU}.

\begin{proposition}[homogeneous case]\label{EU:hom}
Let $(C_1, \dots, C_N)$ be a random element of { $M(d \times d, \R)^N$.}
Assume { $\alpha <\min\{2, s_\infty\}$ and let $r \in \R^d$, satisfy \eqref{ev}.
\begin{itemize}
\item Case $\alpha <2$ : Then there is a unique solution $R$ to the homogeneous equation \eqref{SFPE'} with
$\E R = r$, and $\E \abs{R}^s < \infty$ for all $s < \min\{s_\infty, \beta\}$. $R\equiv 0$ if and only if $r=0$ .
\item Case $\alpha =2$: Assume in addition that there is a symmetric and positive definite matrix $\Sigma \in M(d \times d, \R)$, satisfying \eqref{covariance}. Then there is a unique solution $R$ to the homogeneous equation \eqref{SFPE'} with $\E R =r$, covariance matrix $\Cov(R)=\Sigma$ and $\E \abs{R}^s < \infty$ for all $s < \min\{s_\infty, \beta\}$.
\end{itemize} }
\end{proposition}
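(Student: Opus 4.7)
The plan is to apply the contraction method of Neininger--R\"uschendorf \cite{NR2004} using the Zolotarev ideal metric $\zeta_s$. Recall that for $s = p + \eta$ with $p \in \N_0$ and $\eta \in (0,1]$,
$$\zeta_s(\mu,\nu) := \sup_{f \in \mathcal{F}_s}\bigl|\textstyle\int f\,d\mu - \int f\,d\nu\bigr|,$$
where $\mathcal{F}_s$ denotes the class of $C^p$-functions $f : \R^d \to \R$ whose $p$-th (Fr\'echet) derivative is $\eta$-H\"older with constant at most $1$. The three properties of $\zeta_s$ that drive the proof are: (i) finiteness and completeness on sets of probability measures agreeing in all moments of order $\leq p$ and having finite $s$-th absolute moment; (ii) regularity under independent perturbation, $\zeta_s(X+Z,Y+Z)\leq \zeta_s(X,Y)$ whenever $Z$ is independent of $(X,Y)$; and (iii) matrix scaling, $\zeta_s(AX,AY)\leq \E\|A\|^s\,\zeta_s(X,Y)$ for a random matrix $A$ independent of $(X,Y)$, which follows by conditioning on $A$ and observing that $\|A\|^{-s} f(A\,\cdot\,) \in \mathcal{F}_s$ whenever $f \in \mathcal{F}_s$.

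In the case $\alpha < 2$ pick $s \in (\max\{1,\alpha\}, \min\{2,\beta\})$; in the case $\alpha = 2$ pick $s \in (2, \min\{\beta,s_\infty\})$. Both windows are nonempty under the standing assumptions. Let $\mathcal{M}$ be the set of Borel probability measures on $\R^d$ with finite $s$-th moment, mean $r$, and (in the case $\alpha=2$) covariance $\Sigma$. By property (i), $\zeta_s$ is a finite, complete metric on $\mathcal{M}$. Invariance $\S(\mathcal{M}) \subseteq \mathcal{M}$ follows from \eqref{ev} (and, when $\alpha = 2$, also \eqref{covariance}), together with a Minkowski / $c_r$-inequality bound on the $s$-th moment of $\sum_{i=1}^N C_i X_i$. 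Applying (ii) to the $N$ independent summands and then (iii) conditionally on $(C_1,\dots,C_N)$,
$$\zeta_s(\S\mu,\S\nu) \leq \sum_{i=1}^N \E\|C_i\|^s\,\zeta_s(\mu,\nu) = N\,\E\|C\|^s\,\zeta_s(\mu,\nu).$$
This need not be a strict contraction in one step, so iterate: unfolding $\S^n$ through the weighted branching process (Section \ref{sect:WBP}) gives $\zeta_s(\S^n\mu,\S^n\nu) \leq N^n\,\E\|\Pi_n\|^s\,\zeta_s(\mu,\nu)$; by definition of $m$, $(N^n\,\E\|\Pi_n\|^s)^{1/n}\to m(s) < 1$, so $\S^{n_0}$ is a strict contraction on $(\mathcal{M},\zeta_s)$ for some $n_0$. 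Banach's fixed point theorem then produces a unique fixed point of $\S^{n_0}$, hence of $\S$, in $\mathcal{M}$.

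To extend the moment bound to all $s' < \min\{\beta,s_\infty\}$, one uses that $m(s') < 1$ implies geometric decay of $N^n\,\E\|\Pi_n\|^{s'}$; iterating \eqref{SFPE'} and performing a truncation / monotone convergence argument along the branching tree then yields $\E|R|^{s'} < \infty$. When $\alpha < 2$ and $r = 0$, the Dirac mass $\delta_0$ belongs to $\mathcal{M}$ and is trivially $\S$-invariant, so uniqueness forces $R \equiv 0$; the converse is immediate. The main obstacle is that the one-step factor $N\,\E\|C\|^s$ can exceed $1$ even when $m(s) < 1$, so a single application of $\S$ need not contract; the crucial trick is to pass to a sufficiently high iterate $\S^{n_0}$ and control the contraction factor through the weighted branching process. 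A secondary subtlety, appearing only when $\alpha = 2$, is that $\zeta_s$ of order $s > 2$ demands matching second moments, which is exactly where the hypothesis \eqref{covariance} enters and why the covariance $\Sigma$ must be prescribed a priori.
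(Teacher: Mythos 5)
Your core argument is the paper's: set up $\zeta_s$ on the space of probability measures with fixed integer moments up to order $\lceil s-1\rceil$, apply the Neininger--R\"uschendorf one-step bound $\zeta_s(\S\mu,\S\nu)\le N\,\E\|C\|^s\,\zeta_s(\mu,\nu)$, and then pass to $\S^{n_0}$ via the weighted branching representation to obtain the factor $N^n\,\E\|\Pi_n\|^s\to m(s)^n<1$, which is exactly how the paper converts $m(s)<1$ into a contraction. You also correctly isolate why \eqref{ev} (and, when $\alpha=2$, \eqref{covariance}) enter: they guarantee $\S$ is a self-map of the moment-constrained subspace. So the existence and uniqueness part matches the paper's proof essentially step for step.

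The gap is in the moment extension to all $s'<\min\{\beta,s_\infty\}$. You appeal to ``a truncation / monotone convergence argument along the branching tree.'' In the homogeneous case this does not obviously close: iterating $R\eqdist\sum_{|v|=n}L(v)R(v)$ and bounding moments by Minkowski gives $\bigl(\E|R|^{s'}\bigr)^{1/s'}\le N^n\bigl(\E\|\Pi_n\|^{s'}\,\E|R|^{s'}\bigr)^{1/s'}$, which is circular and, because of the extra $N^{n(s'-1)}$, not even finite in the limit for $s'>1$. The paper instead \emph{re-runs the Zolotarev contraction at the exponent $s'$}, on $\M_{s'}^d(M_k)$ with $k=\lceil s'-1\rceil$, proceeding one integer at a time: by the previous step the moments $M_k(R)$ are already known to be finite, $R$ being a fixed point forces $\S$ to preserve $M_k(R)$, so $\S^{n_0}$ has a unique fixed point in $\M_{s'}^d(M_k(R))$; by the uniqueness already established at a smaller exponent this fixed point must coincide with $R$, whence $\E|R|^{s'}<\infty$. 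You should replace the truncation sketch with this bootstrap (or supply a genuinely different argument, which would require new ideas in the homogeneous case). A secondary, minor point: your window $\bigl(\max\{1,\alpha\},\min\{2,\beta\}\bigr)$ is empty when $\beta\le 1$; in that regime one must take $s\in(\alpha,1)$ with \emph{no} moment constraint, and then $\delta_0$ is the unique fixed point in $\M_s^d$ --- which is how the paper justifies the assertion that nontrivial solutions with finite mean require $\alpha\ge 1$, a fact your write-up leaves implicit.
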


\begin{proposition}[inhomogeneous case]\label{EU:inhom}
Let $(C_1, \dots, C_N,Q)$ be a random element of { $M(d \times d, \R)^N \times \R^d$.} Assume $0 < \alpha < \min\{s_\infty, 2\}$,
and, if $\alpha \ge 1$, let $r \in \R^d$ satisfy \eqref{ev}.
If $\E |Q|^s <\infty$ for some $\alpha <s< \min\{s_\infty, 2\}$ then
 there is a unique solution $R$ to the inhomogeneous equation \eqref{SFPE} with $\E R = r$.

 Moreover if $\E |Q|^{\min\{s_\infty, \beta\}}<\infty$, then  $\E \abs{R}^s < \infty$ for all $s < \min\{s_\infty, \beta\}$.
\end{proposition}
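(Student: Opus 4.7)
The plan is to apply Banach's fixed point theorem to $\S$ (or an iterate of it) on a suitable complete space of probability measures, following the framework of Neininger--R\"uschendorf \cite{NR2004}. Because $\E|Q|^s<\infty$ implies $\E|Q|^t<\infty$ for every $0<t\le s$ (since $|Q|^t\le 1+|Q|^s$), I may shrink $s$ and assume $s\in(\alpha,\beta)\cap(0,\min\{s_\infty,2\})$, so in particular $m(s)<1$. Let
$$
\M_s^r:=\Bigl\{\mu\in\M(\R^d):\ \int|x|^s\,d\mu(x)<\infty,\ \text{and } \int x\,d\mu(x)=r\ \text{if } s\ge 1\Bigr\},
$$
equipped with the minimal $L^s$-Wasserstein metric $\ell_s$. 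This is a complete metric space, nonempty thanks to \eqref{ev} when $s\ge 1$.

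Next, I would verify that $\S$ maps $\M_s^r$ into itself and that some iterate $\S^n$ is a strict contraction. \textbf{Invariance:} for $s\ge 1$, \eqref{ev} gives $\E[\sum_i C_iX_i+Q]=N\E Cr+\E Q=r$ whenever each $X_i$ has mean $r$; finiteness of the $s$-th moment of $\S\mu$ follows from $\E\norm{C}^s<\infty$ and $\E|Q|^s<\infty$ via Minkowski (for $s\ge 1$, after centering by $r$) or von Bahr--Esseen (for $s<1$). \textbf{Contraction:} coupling $X_i\sim\mu$ with $Y_i\sim\nu$ optimally and independently across $i$, and exploiting independence of the weights from $(X_i,Y_i)$, I obtain
$$
\ell_s(\S\mu,\S\nu)^{s\wedge 1}\le N\,\E\norm{C}^s\cdot \ell_s(\mu,\nu)^{s\wedge 1},
$$
by subadditivity of $x\mapsto x^s$ when $s<1$, or by von Bahr--Esseen applied to the mean-zero sum $\sum_i C_i(X_i-Y_i)$ when $1\le s<2$ (the centering enforced by \eqref{ev}). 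Iterating,
$$
\ell_s(\S^n\mu,\S^n\nu)^{s\wedge 1}\le N^n\,\E\norm{\Pi_n}^s\cdot \ell_s(\mu,\nu)^{s\wedge 1},
$$
and since $(N^n\E\norm{\Pi_n}^s)^{1/n}\to m(s)<1$ by \eqref{Def:m}, $\S^n$ is a strict contraction on $\M_s^r$ for $n$ large enough. Banach's theorem then yields a unique fixed point of $\S^n$, which is also the unique fixed point of $\S$ in $\M_s^r$.

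For the moment extension, fix $s'<\min\{s_\infty,\beta\}$ so that $m(s')<1$ and $\E|Q|^{s'}<\infty$. Iterating \eqref{SFPE} along the weighted branching tree (Section \ref{sect:WBP}), one obtains the series representation $R-r\eqdist\sum_{n\ge 0}\sum_{|v|=n}\Pi_v\,\widetilde Q^{(v)}$, where $\widetilde Q^{(v)}:=Q^{(v)}+\sum_i C_i^{(v)}r-N\E Cr-\E Q$ is the centered immigration attached at vertex $v$. Conditioning on the weights $(\Pi_v)_{|v|=n}$, the level-$n$ sum becomes a sum of conditionally independent, mean-zero random vectors, to which one applies Bahr--Esseen (if $s'\le 2$) or a Rosenthal-type inequality combined with a recursion on lower moments (if $s'>2$); in either case the factor $N^n\E\norm{\Pi_n}^{s'}$ drives the geometric decay because $m(s')<1$, so the series converges in $L^{s'}$ and $\E|R|^{s'}<\infty$. \textbf{Main obstacle.} The most delicate point is the centering when $1\le s<2$: the metric $\ell_s$ does not cooperate with non-centered independent sums, so the whole argument has to be carried out on the shifted variable $R-r$, with \eqref{ev} used at every step to guarantee that the immigration term $\widetilde Q$ has mean zero; this is exactly the place at which the hypothesis \eqref{ev} becomes indispensable.
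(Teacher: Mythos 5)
Your proof is correct in substance but follows a genuinely different route from the paper. The paper works throughout with the Zolotarev metric $\zeta_s$ on spaces $\mathfrak{M}_s^d(M_k)$ of measures with prescribed mixed integer moments up to order $k=\lceil s-1\rceil$, and obtains the Lipschitz estimate $\zeta_s(\S\nu,\S\eta)\le(\E\sum_i\norm{C_i}^s)\,\zeta_s(\nu,\eta)$ directly from \cite[Lemma 3.1]{NR2004}, so the contraction of $\S^{n_0}$ is immediate once $m(s)<1$. You instead use the minimal $L^s$-Wasserstein metric and the von Bahr--Esseen inequality. One small inaccuracy to watch: your displayed contraction estimate drops the von Bahr--Esseen constant. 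For $1\le s<2$ the correct bound is $\ell_s(\S^n\mu,\S^n\nu)^s\le 2\,N^n\E\norm{\Pi_n}^s\,\ell_s(\mu,\nu)^s$, and this constant $2$ cannot be removed; also, one must apply von Bahr--Esseen \emph{directly} to the level-$n$ sum of $N^n$ centred terms rather than ``iterating'' the one-step bound, since iteration would produce a factor $2^n$. Because $(N^n\E\norm{\Pi_n}^s)^{1/n}\to m(s)<1$, the bounded extra constant is harmless and $\S^n$ is eventually a strict contraction, so the conclusion stands, but the inequality as written is not correct. For the moment extension the paper simply re-runs the Banach argument in higher-moment Zolotarev spaces, inductively fixing the mixed integer moments of $R$ itself at each integer step; your WBP series representation together with von Bahr--Esseen (and Rosenthal plus a recursion on lower moments when $s'>2$) is a valid alternative, though it is somewhat heavier machinery for $s'>2$ where the Zolotarev route is essentially painless. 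One additional benefit of the paper's Zolotarev approach, not needed for this proposition but relevant in context, is that it handles the boundary case $\alpha=2$ (Proposition~\ref{EU:hom}) on equal footing, which the plain Wasserstein argument does not.
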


{
\begin{remark}
In dimension $d=1$, exploiting the identity \eqref{covariance} for the variance of a solution $R$, one can show that if $\alpha >2$, then there is no nontrivial solution with finite moment of order $s > \alpha$. See \cite{ADM2012} for a detailed discussion. Without further assumptions, this is not true in dimension $d \ge 2$, as the following (even deterministic) example shows: Let
\begin{equation}\label{example_alpha} C_1 = \dots = C_N = 
\begin{pmatrix}
N^{-\frac13} & 0 \\
0 & N^{-\frac12} \\
\end{pmatrix} .
\end{equation}
Then $m(s) = N \norm{C_1}^s = N (N^{-\frac13} )^s = N^{1-\frac{s}{3}}$, thus $\alpha =3$. But if $R_2$ has standard normal distribution, the random vector $(0, R_2)^\top$ is obviously a solution of the smoothing transform associated with \eqref{example_alpha}. The point is that $m$ is only concerned with the largest eigenvalue of $C$, but there may be solutions concentrated on subspaces. 
\end{remark} }

{ These existence and uniqueness results hold for general $d \times d$-matrices $(C_1, \dots, C_N) \in M(d \times d, \R)$. In order to describe the asymptotic behavior of $R$, we will also consider more specialised classes of matrices, namely invertible matrices, i.e. $(C_1, \dots, C_N) \in GL(d, \R)$, and the subclass of similarities.

The limit theorems are now stated separately for two main cases: matrices satisfying some irreducible and density hypotheses (here two subcases appear), and matrices  being similarities.}

\subsection{Asymptotic behavior - general matrices}

\medskip

We start with the first case studied recently in \cite{AM2010}. We will assume that $C$ acts irreducibly on the sphere, i.e.
\begin{equation}
\label{irred} \tag{\textsf{irred}} \forall _{x \in S} \ \forall _{ \text{ open } U \subset S} \quad \quad \max_{n \ge 1} \P{\esl{x\Pi_n} \in U}>0,
\end{equation}
and that the law of $C$ is spread out, i.e. 
\begin{equation}
\label{density} \tag{\textsf{density}} \exists _{A \in GL(d,\R)} \ \exists _{c,\delta >0} \  \exists  _{n \in \N}
\quad \quad \P{\Pi_n \in \cdot} \ge c \1[B_\delta(A)] \lambda^{d\times d} .
\end{equation}

\begin{theorem}\label{main theorem}
Let $(C_1, \dots, C_N,Q)$ be a random element of  { $GL(d,\R)^N \times \R^d$},
and $R$ be the unique solution to \eqref{SFPE} given by Propositions
\ref{EU:hom}, resp. \ref{EU:inhom}. Let the assumptions of these propositions
hold, and assume the existence of $\alpha < \beta < s_\infty$ such that $m(\alpha) = m(\beta)=1$, as well as {  $\E \abs{Q}^{s_\infty} < \infty$. } Let the
conditions \eqref{irred},\eqref{density} hold.

Then
\begin{equation}
\lim_{t \to \infty} t^\beta \P{xR >t} = K \cdot e(x),
\end{equation}
for a positive continuous function $e:S\mapsto(0, \8)$. $K >0$ if and only if $\E \abs{R}^\beta = \infty$.
\end{theorem}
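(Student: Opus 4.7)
The plan is to transform the tail problem into a Markov renewal problem on the unit sphere $S$, following the Kesten--Le~Page scheme adapted to products of invertible matrices. Using the weighted branching process representation of $R$ introduced in Section~\ref{sect:WBP}, and pairing \eqref{SFPE} with a direction $x \in S$, I would encode the tail of $xR$ through the Markov chain $X_n := \esl{x \Pi_n}$ on $S$ and the additive functional $S_n := \log\abs{x\Pi_n}$ built from the same products of matrices.

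First I would construct the spectral data at the exponent $\beta$. Consider the transfer operator
\begin{equation*}
P^\beta f(x) := \E\bigl[\abs{xC}^\beta f(\esl{xC})\bigr], \qquad x \in S,\; f \in C(S).
\end{equation*}
Under \eqref{irred} and \eqref{density}, a Perron--Frobenius/Krein--Rutman argument of Guivarc'h--Le~Page type produces a strictly positive continuous eigenfunction $e \in C(S)$ with eigenvalue $m(\beta)=1$ --- this is the function appearing in the statement --- and a unique probability eigenmeasure $\nu$ on $S$ for the adjoint. After an exponential change of measure via the cocycle $\abs{x\Pi_n}^\beta\, e(\esl{x\Pi_n})/e(x)$ one obtains a probability $\mathbb{P}^\beta$ under which $X_n$ is Harris ergodic with invariant measure $\nu$, and $S_n$ is a Markov-modulated random walk with positive drift $m'(\beta) > 0$. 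The spread-out hypothesis \eqref{density} supplies the aperiodicity needed for Markov renewal theory.

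Next I would set up the renewal equation. With $H(x,t) := e^{\beta t} \P{xR > e^t}$, decomposing $\{xR > e^t\}$ through the identity $xR \eqdist \sum_i (xC_i) R_i + xQ$ leads to an equation of the form
\begin{equation*}
H(x,t) = \sum_{i=1}^N \E\Bigl[\abs{xC_i}^\beta\, H\bigl(\esl{xC_i},\; t - \log\abs{xC_i}\bigr)\Bigr] + \psi(x,t),
\end{equation*}
where the inhomogeneity $\psi$ collects contributions of the event that no single summand dominates. Using $\E\abs{Q}^{s_\infty} < \infty$ together with $\E\abs{R}^s < \infty$ for all $s < \beta$ (Propositions~\ref{EU:hom}, \ref{EU:inhom}), I would verify that $\psi(x,t)/e(x)$ is directly Riemann integrable on $S \times \R$. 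Applying the Markov renewal theorem to $(X_n, S_n)$ under $\mathbb{P}^\beta$ then yields
\begin{equation*}
\lim_{t \to \infty} H(x,t) = K \cdot e(x)
\end{equation*}
for some $K \ge 0$, which is precisely the stated asymptotic.

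The hardest step will be the equivalence $K > 0 \iff \E\abs{R}^\beta = \infty$. The forward direction is immediate, since a strictly positive heavy-tailed leading term on $S$ forces $\E\abs{R}^\beta = \infty$. For the converse I would carry out a Goldie-type implicit renewal argument: express $K$ (up to the positive factor $m'(\beta)^{-1}$) as a single integral of $\psi/e$ against $\nu \otimes dt$, and then show that if $K = 0$ the Mellin function $s \mapsto \E\bigl[\abs{R}^s e(\esl{R})\bigr]$ admits a holomorphic extension across $s = \beta$, contradicting $\E\abs{R}^\beta = \infty$. This holomorphic continuation --- the device flagged in the introduction --- is the non-routine core of the argument, because in the multivariate setting the analytic extension must be carried out jointly over directions on $S$ and uses crucially both the eigenfunction $e$ and the irreducibility of the sphere chain $(X_n)$.
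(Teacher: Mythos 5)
Your proof of the limit formula is in close agreement with the paper's. The paper introduces the transfer operators $T_s f(x) = \E\bigl[|xC|^s f(\esl{xC})\bigr]$ in Section~\ref{sect:Ts}, produces the eigenfunction $e_\beta$ and eigenmeasure $\nu_\beta$ via a Perron--Frobenius argument under \eqref{irred} and \eqref{density}, performs the exponential change of measure to obtain a Markov random walk $(X_n,V_n)$ with positive Harris recurrent driving chain and positive drift, and then in Section~\ref{sect:implicit renewal} writes $\P{xR>e^t}$ as a telescoping sum along the weighted branching tree to arrive at a renewal equation for $\hat f$ with inhomogeneity $g(x,t)=\frac{e^{\beta t}}{e_\beta(x)}\bigl[\P{xR>e^t}-N\P{xCR>e^t}\bigr]$, whose exponential smoothing is directly Riemann integrable by Propositions~\ref{prop:Goldie lemma} and \ref{extension of moments}. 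Your single-step renewal equation with $\psi$ is the one-generation version of the same decomposition; the two setups are equivalent, and the application of the Markov renewal theorem is identical.

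There is, however, a genuine gap in your treatment of the equivalence $K>0 \iff \E|R|^\beta=\infty$. You propose to show that if $K=0$ then the Mellin function $s\mapsto \E\bigl[|R|^s e(\esl{R})\bigr]$ extends holomorphically past $\beta$ and then to invoke Landau's theorem. But the identity that actually falls out of the fixed-point structure for general matrices is
\begin{equation*}
\int_S \E|yR|^s\,\nu_s(dy) \;=\; (1-m(s))^{-1}\int_S \Erw{|yR|^s - \sum_{i=1}^N |yC_iR_i|^s}\nu_s(dy),
\end{equation*}
which uses the eigenmeasure $\nu_s$, not the eigenfunction $e_\beta$, because the eigenvalue relation is $T_s^*\nu_s=\kappa(s)\nu_s$. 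The left-hand side is not a Mellin transform of a fixed positive random variable: the parameter $s$ enters both through $|yR|^s$ and through $\nu_s$. The paper flags this explicitly as the reason Landau's lemma cannot be used (this is precisely the phenomenon that disappears in the similarity case of Proposition~\ref{prop:dichotomy}, where $\nu_s$ is independent of $s$ and the full dichotomy goes through). Your proposed function $\E\bigl[|R|^s e(\esl{R})\bigr]$ is not equal to $\int_S \E|yR|^s\nu_s(dy)$, and there is no clean way to derive a renewal-type identity for it, so the step ``show the Mellin function extends holomorphically'' is unsupported.

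The paper's actual workaround is weaker but sufficient: if $K=0$ one still obtains, via Proposition~\ref{RHS:holomorphic} and Corollary~\ref{cor:perturbation}, a holomorphic function $\Psi$ on a neighbourhood of $\beta$ that agrees with $\int_S \E|yR|^s\nu_s(dy)$ for $s<\beta$, hence $\lim_{s\uparrow\beta}\int_S\E|yR|^s\nu_s(dy)<\infty$. Lemma~\ref{landau lemma} then gives the contradiction: if $\E|R|^\beta=\infty$, using the uniform minorization $\nu_s\ge p\,\phi$ (Lemma~\ref{lemma:supp:nu}), the observation that $\E|xR|^\beta=\infty$ propagates to an open set of directions (Lemma~\ref{infinite on a open set}), and Fatou's lemma, one shows $\lim_{s\to\beta}\int_S\E|yR|^s\nu_s(dy)=\infty$. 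This real-variable limit argument replaces the unavailable Landau step and is the missing ingredient in your proposal. (The direction $\E|R|^\beta<\infty\Rightarrow K=0$ is, as you note, immediate.)
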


In the light of \cite{BDM2002,BL2009}, this property is (as long as $\beta$ is not an even integer) equivalent to multivariate regular variation: There is a unique Radon measure $\Lambda$ on $\R^d$, such that for every $f \in C_C(\R^d\setminus\{0\})$, the set of compactly supported functions,
$$ \lim_{t \to \infty} t^\beta \E f(t^{-1}R) = \int_{\R^d\setminus\{0\}} f(x) \Lambda(dx).$$

{ Particularly motivated by \cite[Example 6.1]{Bassetti2011}, we additionally state the theorem with a slightly different set of assumptions, which are in particular suitable for random matrices of the type $C=YY^\top$, where $Y$ is a random vector with a spread-out density.

So here we do not  assume $C \in GL(d, \R)$, instead we need:
\begin{equation}
\label{notvanish} \tag{\textsf{not vanishing}} \forall_{x \in S} \P{xC = 0}=0 .
\end{equation}
{ A probability measure on $\R$ is called nonarithmetic, if the smallest closed  group containing its support is $\R$.} The densitiy assumption now reads as follows:
\begin{equation}
\label{minorization} \tag{\textsf{minorization}} \exists_{p >0} \ \exists_{\phi \in \M(S)} \ \exists_{\psi \in \M(\R), \psi \text{ nonarithmetic}}\ \forall_{x \in S} \quad \P{ \esl{xC} \in \cdot, \log \abs{xC} \in \cdot} \ge p\ \phi \otimes \psi .
\end{equation}

\begin{theorem}\label{main theorem2}
Let $(C_1, \dots, C_N,Q)$ be a random element of  $M(d \times d,\R)^N \times \R^d$,
and $R$ be the unique solution to \eqref{SFPE} given by Propositions
\ref{EU:hom}, resp. \ref{EU:inhom}. Let the assumptions of these propositions
hold, and assume the existence of $\alpha < \beta < s_\infty$ such that $m(\alpha) = m(\beta)=1$, as well as { $\E \abs{Q}^{s_\infty} <\infty$}. Let the
conditions \eqref{notvanish},\eqref{minorization} hold.

Then
\begin{equation}
\lim_{t \to \infty} t^\beta \P{xR >t} = K \cdot e(x),
\end{equation}
for a positive continuous function $e:S\mapsto(0, \8)$. { Moreover, }$K >0$ if and only if $\E \abs{R}^\beta = \infty$.
\end{theorem}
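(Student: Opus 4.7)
The plan is to mirror the proof of Theorem \ref{main theorem}, replacing each appeal to \eqref{irred} and \eqref{density} by an argument based on the Doeblin-type condition \eqref{minorization} and the non-vanishing condition \eqref{notvanish}. First I would represent $R$ via the weighted branching process of Section \ref{sect:WBP}, writing $\langle x,R\rangle$ as an a.s.\ convergent sum indexed by the vertices of an $N$-ary tree whose summands are obtained by applying random matrix products to $x$. The tail asymptotics are then reduced, via a Goldie-type implicit renewal argument, to the study of the Markov random walk on $S\times\R$ given by $X_n^x := \esl{x\Pi_n}$ and $S_n^x := \log|x\Pi_n|$. Note that iterating \eqref{notvanish} conditionally shows $x\Pi_n \ne 0$ a.s.\ for every $n$, so $X_n^x$ and $S_n^x$ are well defined.

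Next comes the spectral analysis of the transfer operator $P^\beta f(x) := \E[|xC|^\beta f(\esl{xC})]$ on $C(S)$. The minorization \eqref{minorization} gives, uniformly in $x$, a lower bound $p\,\phi\otimes\psi$ on the joint law of $(X_1^x, S_1^x)$; iterated, this supplies a Doeblin-type condition which, together with the compactness of $S$, yields a spectral gap for $N P^\beta$ and, by a Perron--Frobenius argument, a unique strictly positive continuous eigenfunction $e \in C(S)$ satisfying $N P^\beta e = e$. This $e$ is the function appearing in the statement. Performing the standard $h$-transform on $\sigma(C^{(1)},\dots,C^{(n)})$ via density $N^n |x\Pi_n|^\beta e(X_n^x)/e(x)$ produces a new probability under which $(X_n^x)$ becomes a Harris-recurrent Markov chain with a unique invariant probability $\nu$ on $S$, and $(S_n^x)$ is a Markov random walk of positive drift $m'(\beta)/m(\beta) > 0$ (positivity of the derivative follows from $\alpha<\beta$ and convexity of $m$). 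The nonarithmeticity of $\psi$ transfers to nonarithmeticity of the increments of $S_n^x$ uniformly in $x$, which is exactly the hypothesis of the nonlattice Markov renewal theorem of Kesten--Athreya--Ney type; applying it to the WBP representation of $\langle x,R\rangle$ yields
\[ \lim_{t\to\infty} t^\beta\,\P{\langle x,R\rangle>t} \;=\; K\cdot e(x), \]
with continuity and strict positivity of $e$ inherited from the continuity of $P^\beta$ acting on $C(S)$.

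The main obstacle is the dichotomy $K>0 \iff \E|R|^\beta=\infty$. One direction is immediate: if $\E|R|^\beta<\infty$, then $t^\beta \P{|\langle x,R\rangle| > t} \to 0$ (otherwise the representation $\E|\langle x,R\rangle|^\beta = \beta\int_0^\infty t^{\beta-1}\P{|\langle x,R\rangle|>t}\,dt$ diverges), forcing $K\cdot e(x)=0$ and hence $K=0$ since $e>0$. For the converse, the invertibility-based argument of Theorem \ref{main theorem} is no longer available, and I would instead use the holomorphic/Mellin-transform approach indicated in the introduction: rewriting the fixed-point equation as an identity for $s \mapsto \E[\langle x,R\rangle_+^s]$, one obtains a meromorphic continuation across $s=\beta$ whose residue is proportional to $K\cdot e(x)$, so the vanishing of $K$ implies finiteness of the $\beta$-th moment. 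Handling the non-invertibility of $C$ within this holomorphic framework is the technical heart of the argument: \eqref{notvanish} is precisely what guarantees that $e(x)>0$ for every $x\in S$ and that the kernels $P^s$ on $S\times\R$ remain well defined and depend holomorphically on $s$ throughout a neighbourhood of $\beta$, so that the residue computation makes sense.
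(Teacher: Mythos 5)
Your plan for the existence of the limit is essentially identical to the paper's: represent $R$ by the weighted branching process, pass to the Markov random walk $(X_n^x,V_n^x)=(\widetilde{x\Pi_n},\log|x\Pi_n|)$, do the $e_\beta$-change of measure, and apply the Markov renewal theorem, with \eqref{minorization} supplying directly the Doeblin condition, bounded increments at regeneration epochs, and nonarithmeticity that in Theorem~\ref{main theorem} had to be extracted from \eqref{irred}, \eqref{density}. That part is correct and is how the paper argues (Sections~\ref{sect:Ts}--\ref{sect:implicit renewal}).

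For the positivity dichotomy, your easy direction is fine. Your hard direction, however, is a genuinely different route from the paper's, and it is worth being precise about the two. The paper does \emph{not} use a separate ``invertibility-based'' positivity argument for Theorem~\ref{main theorem}; Section~\ref{sect:positivity} treats both Theorems~\ref{main theorem} and \ref{main theorem2} by the same argument, centred on the scalar identity \eqref{identity},
\[
\int_S \E|yR|^s\,\nu_s(dy)=\bigl(1-m(s)\bigr)^{-1}\int_S \E\Bigl[|yR|^s-\sum_i|yC_iR_i|^s\Bigr]\nu_s(dy),
\]
and the paper then explicitly notes that Landau's theorem cannot be applied directly to the left side because $\int_S\E|yR|^s\,\nu_s(dy)$ is a \emph{mixture} of Mellin transforms, not a Mellin transform (the measure $\nu_s$ varies with $s$). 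It circumvents this via Lemma~\ref{landau lemma}, a Fatou-type argument showing that $\E|R|^\beta=\infty$ forces $\int_S\E|yR|^s\,\nu_s(dy)\to\infty$ as $s\uparrow\beta$, using Lemma~\ref{lemma:supp:nu} and Lemma~\ref{infinite on a open set}. You instead propose to continue the \emph{pointwise} Mellin transform $g_s(x)=\E\langle x,R\rangle_+^s$: from the functional equation $g_s=h_s+NT_sg_s$ with $h_s(x)=\E[\langle x,R\rangle_+^s-\sum_i\langle x,C_iR_i\rangle_+^s]$ (holomorphic on a strip by Propositions~\ref{prop:Goldie lemma} and \ref{extension of moments}), invert $(I-NT_s)$ using the simple-pole structure of its resolvent at $s=\beta$ (Corollary~\ref{cor:perturbation}), obtain a meromorphic continuation with residue a multiple of $e_\beta(x)\int_S h_\beta\,d\nu_\beta\propto K e_\beta(x)$, and if $K=0$ conclude from Landau that $\E\langle x,R\rangle_+^\beta<\infty$ for each $x$. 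That works and sidesteps the mixed-Mellin obstruction, at the cost of invoking the operator-valued perturbation theory more seriously. Note, though, that you still need the step from ``$\E\langle x,R\rangle_+^\beta<\infty$ for every $x$'' to ``$\E|R|^\beta<\infty$'', which is exactly Lemma~\ref{infinite on a open set} in the paper; your sketch leaves this implicit. With that step supplied, your variant is a clean alternative to the paper's Lemma~\ref{landau lemma}.
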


}

\subsection{Asymptotic behavior - group of similarities}

Next we will assume that $C_i$ are elements of the group of similarities
i.e. the group of elements $g$ of ${\rm GL}(d,\R)$ satisfying
$$
|gx| = \|g\||x|
$$ for every $x\in \R^d$. The group of similarities $G$ is the direct product of { the multiplicative group $\R^+$} and the orthogonal group $O(d)$.

\medskip

Notice that if $g\in G$ then its norm is given by its radial part, i.e. if $g=tk$, $t\in\R^+$, $k\in O(d)$, then $\|g\|=t$. This implies in particular
 that in this case $m(s) = N\E\|C\|^s$. We denote by $\mu$ the law of $C$ and by
 $G_{\mu}$ the subgroup of $G$ generated by the support of $\mu$. We will assume
 that { $\log \norm{C}$ is nonarithmetic.}
Then there is a subgroup $H_{\mu}$ of $O(d)$ such that{
$$ G_{\mu }=  \R ^+ \times H_{\mu}.$$}
(see \cite{BDGHU2009} for more about the structure of the group
$G$).

\begin{theorem}
\label{thm: similarities}
{ Let $(C_1, \dots, C_N,Q)$ be a random element of  $G^N \times \R^d$,
and $R$ be the unique solution to \eqref{SFPE} given by Propositions
\ref{EU:hom}, resp. \ref{EU:inhom}. Let the assumptions of these propositions
hold, and assume the existence of $\alpha < \beta < s_\infty$ such that $m(\alpha) = m(\beta)=1$. Assume moreover { that $\E|Q|^{\beta}<\8$ and that $\log \norm{C_i}$ }are nonarithmetic.} Then
$$
\lim_{t\to\8} t^\b \P{|R|>t} = K_+.
$$
Moreover there exists a unique Radon measure $\Lambda$ such that for any $f\in C_C(\R^d\setminus\{0\})$
$$
\lim_{\norm{a}\to 0, a\in G_{\mu}} \norm{a}^{-\b}\E f(aR) = \int_{\R^d\setminus\{0\}} f(x) \Lambda(dx).
$$
The measure $\Lambda$ is homogeneous and $\Lambda(dg) = \sigma(dk)\frac{dt}{t^{1+\b}}$ for some finite measure on $S$ such that
$$
\sigma(S) = \frac 1{m_\b} \E\bigg[ \bigg|\sum_{i=1}^N C_iR_i + Q\bigg|^\b    -\sum_{i=1}^N |C_iR_i|^\b  \bigg],
$$ where $m_\b = \E[\norm{C}^\b \log\norm{C}]>0$.
\end{theorem}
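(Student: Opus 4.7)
The plan is to exploit the similarity structure of the weights, which decouples the radial and angular dynamics. Writing $C_i = \norm{C_i}\,k_i$ with $k_i \in O(d)$, the identity $\abs{C_i x} = \norm{C_i}\,\abs{x}$ reduces the asymptotics of $\abs{R}$ to an essentially one-dimensional problem for the radial part. I would first iterate the fixed point equation \eqref{SFPE} along the $N$-ary weighted branching process associated with $(C_1,\dots,C_N,Q)$, yielding an a.s.\ convergent representation of $R$ as a weighted sum over nodes $u$ of the branching tree, and use that $\log\norm{L(u)}$ forms a branching random walk whose one-generation log-Laplace transform is $\log m(\cdot)$.

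The central step would be an application of Goldie's implicit renewal theorem, in the form adapted to smoothing transforms by Jelenkovi\'c and Olvera-Cravioto \cite{JO2010a,JO2010b}. Combining the fixed point equation with $\abs{C_iR_i}=\norm{C_i}\,\abs{R_i}$, and size-biasing the distribution of $\norm{C}$ by the factor $N\norm{C}^\beta$ (which integrates to $m(\beta)=1$), one derives a renewal equation of the form $g(u)=(g\ast\tilde{\eta})(u)+\psi(u)$, where $g(u)=e^{\beta u}\,\P{\abs{R}>e^u}$, $\tilde{\eta}$ is the image of the size-biased law under $\log$, and $\psi$ is an explicit defect. Using the moment hypothesis $\E\abs{Q}^\beta<\infty$, the existence of a second root $\alpha<\beta$ of $m(\cdot)=1$ (which provides the interpolation bound $\E\abs{R}^\alpha<\infty$), and the standing assumption that $\log\norm{C}$ is nonarithmetic, Blackwell's renewal theorem then delivers $t^\beta\,\P{\abs{R}>t}\to K_+$.

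For the Radon measure $\Lambda$, I would separately track the direction $\esl{R}\in S$ along the weighted branching process. Since each $k_i$ acts isometrically on $S$, the direction of the dominant branching-process summand at scale $t$ is driven by a product of orthogonal matrices sampled from the size-biased law; this is a random walk on $H_\mu$ whose induced Markov chain on $S$ admits an invariant distribution. Combining the radial convergence with the angular convergence, and exploiting the decomposition $a=\norm{a}\,k$ for $a\in G_\mu=\R^+\times H_\mu$, yields the claimed product form $\Lambda(dg)=\sigma(dk)\,dt/t^{1+\beta}$. To obtain the stated expression for $\sigma(S)$, I would apply $\abs{\cdot}^\beta$ to both sides of \eqref{SFPE} against a suitable truncation and pass to the limit: the naive identity $\E\abs{R}^\beta = \E\abs{\sum_i C_iR_i+Q}^\beta$ is vacuous in the regime $\E\abs{R}^\beta=\infty$, but the \emph{difference} $\E[\abs{\sum_i C_iR_i+Q}^\beta-\sum_i\abs{C_iR_i}^\beta]$ is finite and captures precisely the angular mixing, while division by $m_\beta=\E[\norm{C}^\beta\log\norm{C}]$ (positive because $m$ is differentiable at $\beta$ with $m'(\beta)>0$) is the standard Tauberian normalisation that converts this boundary quantity into $\sigma(S)$.

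The main obstacle will be establishing direct Riemann integrability of the defect $\psi$, which requires delicate moment bounds near $\beta$ interpolating between $\alpha$ and $\beta$; closely related is the verification that the angular chain on $S$ has sufficient ergodic regularity to yield the weak convergence of $\esl{R}$ conditional on $\abs{R}$ large, and the \emph{a priori} finiteness of the expression inside the expectation defining $\sigma(S)$.
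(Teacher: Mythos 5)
Your plan for the radial tail $t^\beta\,\P{\abs{R}>t}\to K_+$ is sound and in fact parallels the paper: expand $\P{\abs{R}>t}$ telescopically along the weighted branching process, size-bias by $N\norm{C}^\beta$, identify the defect $e^{\beta u}\bigl[\P{\abs{R}>e^u}-N\P{\abs{CR}>e^u}\bigr]$, prove direct Riemann integrability via interpolation between $\alpha$ and $\beta$ moments, and apply the key renewal theorem. Where you diverge is significant: you propose a purely scalar Goldie/Jelenkovi\'c--Olvera-Cravioto renewal argument for $\abs{R}$, followed by a \emph{separate} angular analysis, whereas the paper works with the potential $\sum_k\int_G\psi_f(ag)\,\mu_\beta^{*k}(dg)$ of the size-biased law on the group $G=\R^+\times O(d)$ and invokes a single renewal theorem on $G$ (Theorem~A.1 of \cite{BDGHU2009}) that directly yields $\lim_{\norm{a}\to 0}\norm{a}^{-\beta}\E f(aR)=m_\beta^{-1}\int_G\psi_f\,dg$ for all $f\in C_C(\R^d\setminus\{0\})$. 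This produces the Radon measure $\Lambda$, its uniqueness, and its homogeneity (hence the product form $\sigma(dk)\,dt/t^{1+\beta}$) in one stroke.

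The genuine gap in your proposal is the angular step. Your argument that ``the direction of the dominant branching-process summand at scale $t$ is driven by a random walk on $H_\mu$'' is a heuristic, not a proof: for a weighted branching sum there need not be a single dominant summand, the heavy-tail contribution is not localized on one path, and a Harris-ergodicity statement for the directional chain does not by itself yield convergence of the law of $\esl{R}$ conditioned on $\abs{R}>t$, nor the identification of the limit with the product of an invariant angular measure and the power law. Deducing multivariate regular variation from one-dimensional tail convergence plus an ergodic angular chain requires a substantive bridging argument, and that bridge is exactly what the group-level renewal theorem supplies. Finally, your derivation of $\sigma(S)$ is only a sketch; the paper's route is to observe that $\psi_f$ is radial for radial $f$, write $\is{f}{\Lambda}=\sigma(S)\,I_f(\beta)=m_\beta^{-1}\int_0^\infty\psi_f(t)\,dt/t$, and then compute $\int_0^\infty t^{\beta-s}\psi_f(t)\,dt/t=\E\bigl[\abs{\sum_iC_iR_i+Q}^s-\sum_i\norm{C_i}^s\abs{R_i}^s\bigr]I_f(s)$ for $s<\beta$ (where all moments are finite) and let $s\uparrow\beta$ by dominated convergence --- you should make this explicit, since the truncation argument you gesture at is not obviously equivalent.
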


\begin{rem} The theorem stated above is an analogy with the one
obtained in \cite{BDGHU2009} for the solution of $R\stackrel{d}= CR+Q$.
\end{rem}

In this case of similarities, we obtain a much stronger dichotomy
concerning nontriviality of the limit measure: 
\begin{prop}\label{prop:dichotomy}
Suppose that the assumptions of Theorem \ref{thm:
similarities} are satisfied { and additionally $\E |Q|^{s_\infty}<\infty $.Then either $K_+$ and
$\sigma(S)$ are positive, or $\E \abs{R}^s < \infty$ for all $s <
s_\infty$. }
\end{prop}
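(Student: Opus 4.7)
The plan is to establish the dichotomy via three facts: (i) $K_+>0\iff\sigma(S)>0$; (ii) $\sigma(S)=0\iff\E|R|^\beta<\infty$; and (iii) $\E|R|^\beta<\infty\Rightarrow\E|R|^s<\infty$ for all $s<s_\infty$. Together these imply that the only possibilities are the two cases in the statement. Claim (i) is immediate from the product form $\Lambda(dg)=\sigma(dk)\,t^{-1-\beta}\,dt$ established in Theorem~\ref{thm: similarities}: integrating over $\{|x|\ge 1\}$ gives $K_+=\sigma(S)/\beta$.

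For (ii), observe that the similarity property $|C_iR_i|=\|C_i\|\,|R_i|$, independence of $R_i$ from $C_i$, and $N\E\|C\|^\beta=m(\beta)=1$ together yield $\sum_{i=1}^{N}\E|C_iR_i|^\beta=\E|R|^\beta$, while the fixed point equation gives $\E|\sum C_iR_i+Q|^\beta=\E|R|^\beta$. Hence, whenever $\E|R|^\beta<\infty$, the bracket in the formula for $\sigma(S)$ in Theorem~\ref{thm: similarities} has zero expectation. Conversely, $\E|R|^\beta=\infty$ forces $t^\beta\P{|R|>t}\not\to 0$ by Markov, and combined with the existence of the limit $K_+$ this gives $K_+>0$ and hence $\sigma(S)>0$.

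The substance of the proof lies in (iii). Assume $\E|R|^\beta<\infty$. I would bootstrap: show that for any $s\in[\beta,s_\infty)$ with $\E|R|^s<\infty$, there exists $\varepsilon=\varepsilon(s)>0$ such that $\E|R|^{s+\varepsilon}<\infty$. The main tool is the weighted branching representation obtained by iterating the fixed point equation $n$ times,
\[
R\eqdist\sum_{|v|=n}L_v R_v+\sum_{k=0}^{n-1}\sum_{|v|=k}L_v Q_v,
\]
where $L_v$ is the product of similarities along the path $v$. Centring by $r=\E R$, which satisfies \eqref{ev}, converts the branches into conditionally centred sums of independent random vectors; Rosenthal's inequality applied conditionally on the environment, together with the integrability $\E|Q|^{s_\infty}<\infty$, controls the lower-order ``square-function'' contributions and leaves one with a single problematic factor of the form $(m(s+\varepsilon))^n\cdot\E|R-r|^{s+\varepsilon}$.

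The main obstacle is that $m(s+\varepsilon)>1$ whenever $s+\varepsilon>\beta$, so the naive iterate grows rather than contracts in $n$. The natural way out is to use the spectral ingredients driving the renewal-theoretic proof of Theorem~\ref{thm: similarities}: the asymptotic $t^\beta\P{|R|>t}\to K_+$ is governed by the leading eigenvalue of the transfer operator on the sphere at exponent $\beta$; when $K_+=0$ this leading piece vanishes, and the spectral gap of that operator upgrades the tail to $\P{|R|>t}=O(t^{-\beta-\delta})$ for some $\delta>0$. This yields $\E|R|^{\beta+\delta'}<\infty$ for every $\delta'<\delta$, and repeating the bootstrap step with $\beta+\delta'$ in place of $\beta$ a finite number of times exhausts $[\beta,s_\infty)$.
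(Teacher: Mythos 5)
Your step (i) and the forward half of (ii) are fine, but there are two genuine gaps that the paper's actual argument is specifically designed to avoid.

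First, the ``conversely'' direction of (ii) is wrong as stated. You claim that $\E|R|^\beta=\infty$ forces $t^\beta\P{|R|>t}\not\to0$ ``by Markov,'' but Markov's inequality gives the implication in the opposite direction: knowing $\lim_t t^\beta\P{|R|>t}=0$ does \emph{not} imply $\E|R|^\beta<\infty$. Consider $\P{|R|>t}=t^{-\beta}/\log t$ for large $t$: then $t^\beta\P{|R|>t}\to 0$, yet $\E|R|^\beta=\beta\int^\infty t^{\beta-1}\P{|R|>t}\,dt$ diverges like $\int dt/(t\log t)$. So the existence of the limit $K_+$ with $K_+=0$ is perfectly compatible with $\E|R|^\beta=\infty$ at the level of this elementary argument, and the step $\sigma(S)=0\Rightarrow\E|R|^\beta<\infty$ --- which is the heart of the proposition --- is left unproved.

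Second, the bootstrap in (iii) is not actually carried out. You correctly identify the obstruction ($m(s+\varepsilon)>1$ once you are past $\beta$, so the $n$-fold weighted branching iterate does not contract), but the proposed resolution (``the spectral gap upgrades the tail to $O(t^{-\beta-\delta})$'') is a heuristic, not a proof: the renewal theorem used in Theorem~\ref{thm: similarities} delivers a limit, not a rate, and it is not clear how to extract the claimed polynomial improvement from a vanishing leading term.

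The paper bypasses both gaps at once by a holomorphic-continuation argument. Using the Goldie-type identity one writes $\E|R|^z=\bigl(\E[|R|^z-\sum_i|C_iR_i|^z]\bigr)/(1-m(z))$ for $\Re z<\beta$. If $\sigma(S)=0$, the numerator (which equals $\E[|\sum_iC_iR_i+Q|^z-\sum_i|C_iR_i|^z]$) extends holomorphically to $\Re z<\beta+\varepsilon$ and vanishes at $\beta$, cancelling the simple zero of $1-m(z)$; hence the Mellin transform $s\mapsto\E|R|^s$ extends holomorphically past $\beta$. Landau's theorem (for Laplace/Mellin transforms with nonnegative kernel, \cite[Thm.~5, p.~57]{Widder1941}) then says the abscissa of convergence $\theta_\infty$ must exceed $\beta+\varepsilon$, giving $\E|R|^{\beta+\varepsilon}<\infty$; repeating the argument at $\theta_\infty$ (using $m(z)\ne1$ for $z\ne\alpha,\beta$) shows $\theta_\infty\ge s_\infty$. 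This single Landau step replaces both your (ii)$\Rightarrow$ and your (iii); it is the key lemma your proposal is missing.
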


From this we will deduce a sufficient condition for $\sigma(S)>0$:
\begin{proposition}\label{prop:positivity}
{ Under the assumptions of Theorem \ref{thm: similarities}, let  $\gamma\in(\beta, s_\infty)$, be such that $\E \norm{C}^\gamma =1$, and $\E \abs{Q}^{s_\infty} < \infty$. Then
\begin{equation}
\sigma(S)>0 \ \Leftrightarrow \  \text{\eqref{no trivial solution} holds.}
\end{equation} }
\end{proposition}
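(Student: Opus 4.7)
The plan is to establish the equivalence by showing each implication, the forward one being essentially immediate from uniqueness and the backward one requiring a moment argument that exploits the extra exponent $\gamma$.

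\emph{Easy direction} ($\sigma(S)>0 \Rightarrow \eqref{no trivial solution}$). I argue by contraposition. If $r\in\R^d$ satisfies $\P{r=\sum_k C_k r+Q}=1$, then $R\equiv r$ by the uniqueness in Propositions~\ref{EU:hom}/\ref{EU:inhom}, so $|R|\le|r|$ a.s., and therefore $K_+=0$ and $\sigma(S)=0$.

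\emph{Hard direction} ($\eqref{no trivial solution}\Rightarrow\sigma(S)>0$). Again by contraposition, I assume $\sigma(S)=0$ and produce a deterministic solution. Since $\E|Q|^{s_\infty}<\infty$, Proposition~\ref{prop:dichotomy} applies and gives $\E|R|^s<\infty$ for every $s<s_\infty$; in particular $\E|R|^\gamma<\infty$. The core task is then to show that $\E|R|^\gamma<\infty$ combined with $\E\|C\|^\gamma=1$ and $\gamma>\beta$ forces $R$ to be a.s.\ constant, so that $r^*:=\E R$ satisfies $r^*=\sum_i C_i r^*+Q$ a.s., contradicting \eqref{no trivial solution}. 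To carry this out I would centre, setting $U:=R-r^*$; by \eqref{ev}, $U\eqdist\sum_i C_i U_i+\tilde Q$ with $\tilde Q:=\sum_i (C_i-\E C)r^*+(Q-\E Q)$ of mean zero. Iterating $n$ times along the weighted branching process yields
\[
U\eqdist \sum_{|v|=n}L(v)U_v+\tilde W_n,\qquad \tilde W_n:=\sum_{|u|<n}L(u)\tilde Q_u,
\]
with $U_v$ and $\tilde Q_u$ iid centred copies, independent of the weights. For similarities $|L(v)x|=\|L(v)\|\,|x|$, and by independence of the matrices along a path,
\[
\E\sum_{|v|=n}\|L(v)\|^\gamma = N^n(\E\|C\|^\gamma)^n = N^n,
\]
so $\gamma$ is precisely the exponent at which the branching $\gamma$-mass grows at the full tree-size rate. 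Conditioning on the weights, $\sum_{|v|=n}L(v)U_v$ is a sum of $N^n$ independent centred random vectors; applying a vector-valued Marcinkiewicz--Zygmund / Rosenthal \emph{lower} bound combined with the reverse triangle inequality $|a+b|^\gamma\ge 2^{1-\gamma}|a|^\gamma-|b|^\gamma$ produces an inequality of the form
\[
\E|U|^\gamma \;\ge\; c_\gamma N^n\E|U|^\gamma \;-\; C_\gamma \E|\tilde W_n|^\gamma,
\]
which, in view of a careful upper bound on $\E|\tilde W_n|^\gamma$ obtained from $\E|Q|^{s_\infty}<\infty$ and the branching structure, can hold as $n\to\infty$ only if $\E|U|^\gamma=0$, hence $R\equiv r^*$ a.s.

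\emph{Main obstacle.} The lower Marcinkiewicz--Zygmund bound above is clean only for $\gamma\ge 2$. In the range $\gamma\in(\beta,2)$ one has to either (i) reduce to the $L^2$-argument (valid when $s_\infty>2$, where the similarity identity $\mathrm{tr}(L(v)\Sigma L(v)^\top)=\|L(v)\|^2\mathrm{tr}\,\Sigma$ collapses the covariance equation to the scalar relation $(1-m(2))\mathrm{tr}\,\Sigma=\mathrm{tr}\,\mathrm{Cov}(\tilde Q)$, closing the argument when $m(2)\ne 1$), or (ii) use symmetrisation together with von Bahr--Esseen-type estimates exploiting the similarity structure. A second delicate point is the balance between the branching contribution and $\tilde W_n$, which a priori grow at comparable exponential rates: teasing out the dominant term requires using the independence of $(U_v)_v$ from $\tilde W_n$ together with the precise $\gamma$-moment bounds on $\tilde Q$ coming from $\E|Q|^{s_\infty}<\infty$.
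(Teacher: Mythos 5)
Your easy direction coincides with the paper's. For the hard direction, however, the paper takes a fundamentally different and much shorter route: it sets $B:=\sum_{i=2}^N C_iR_i+Q$ and observes that $R$ solves the one-term affine recursion $R\eqdist C_1R+B$; the Lemma immediately preceding the proof shows that \eqref{no trivial solution} forces the Kesten--Goldie nondegeneracy condition $\P{C_1r+B=r}<1$ for all $r$. Since $\E\norm{C_1}^\gamma=1$, $\log\norm{C_1}$ is nonarithmetic, and $\E\abs{B}^\gamma<\infty$ (from $\E\abs{Q}^\gamma<\infty$ together with the moment finiteness delivered by Proposition~\ref{prop:dichotomy}), condition~\textbf{H} of \cite{BDGHU2009} holds and one can simply invoke \cite[Proposition~2.6]{BDG2010} to get $\E\abs{R}^\gamma=\infty$, contradicting that finiteness. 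Your proposal instead tries to reprove the Kesten-type heavy-tail conclusion from scratch via a branching moment estimate, and that attempt does not close.

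Two gaps, both of which you flag but neither of which you resolve, are genuine obstructions. First, the lower Rosenthal/Marcinkiewicz--Zygmund bound you invoke is only available for $\gamma\ge 2$, whereas $\gamma$ can lie anywhere in $(\beta,s_\infty)$. Second --- and this is fatal even when $\gamma\ge 2$ --- the immigration term $\tilde W_n=\sum_{\abs{u}<n}L(u)\tilde Q_u$ has $\gamma$-moment of the \emph{same} order $N^n$ as the branching term: because $\E\norm{C}^\gamma=1$, the generation-$k$ contribution to $\E\abs{\tilde W_n}^\gamma$ scales like $N^k$, and $\sum_{k<n}N^k\asymp N^n$. Hence your inequality becomes $\E\abs{U}^\gamma\ge c_\gamma N^n\,\E\abs{U}^\gamma-C'N^n$, which for any fixed $n$ yields only an \emph{upper} bound on $\E\abs{U}^\gamma$ (namely $\E\abs{U}^\gamma\lesssim C'/c_\gamma$) and never forces $\E\abs{U}^\gamma=0$. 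No crude moment balance at the critical exponent $\gamma$ can distinguish a constant solution from a heavy-tailed one; this dichotomy is exactly what the implicit-renewal machinery behind \cite[Proposition~2.6]{BDG2010} is designed to detect, which is why the paper routes through that result rather than through direct moment bounds.
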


\subsection{The structure of the paper}

The organization of the paper is as follows. We start with a brief section introducing the weighted branching process, a stochastic model that allows to study iterations of $\S$ in terms of random variables. 
The proof of existence and uniqueness of solutions to \eqref{SFPE} and \eqref{SFPE'} (Proposition \ref{EU:hom} and Proposition \ref{EU:inhom}) is classical and we present only main arguments, therefore we postpone the proof to Appendix \ref{sect:EU}. We consider separately general matrices and similarities. Although ideas are similar, the two proofs require slightly different techniques. All proofs are divided into two parts. First we prove the existence of the limit and then the positivity of the limiting constant. 
General matrices are treated in Sections \ref{sect:Ts} and \ref{sect:implicit renewal}. First we introduce the tools (transfer operators) in Section \ref{sect:Ts} and then in Section \ref{sect:implicit renewal} we formulate the renewal theorem and conclude existence of the limit. Asymptotic behavior of solutions in the similarity case is considered  in Section \ref{sect:similarities}. Finally we finish proofs of our main results in Section \ref{sect:positivity}, where we prove positivity of the limiting constants. Since  the proofs consist of tedious calculations,
  for a better stream of arguments, some proofs have been carried over into  Appendix \ref{app:lemmas} and Appendix \ref{app:proofs}. In Section \ref{sect:example} we present applications of our results to very concrete models.

\section{The weighted branching
process}\label{sect:WBP}

In this section, we introduce a stochastic model, the weighted branching process (WBP), that produces a sequence of random variables $(Y_n)$ with $\mathcal{L}(Y_n)=\S^n(\mu)$.
We consider random variables, indexed by a $N$-ary tree { $\mathcal{T}$}. For a node
$v=(i_1, \dots, i_k)$, we denote its level by $\abs{v}=k$, its ancestor in the
$l$-th level, $l \le k$, by $v|l=(i_1, \dots, i_l)$ and its $i$th successor by
$vi=(i_1, \dots, i_k,i)$. The root is denoted by $\emptyset$.
To define the weighted branching process, we assign to each node $v$ a copy
$(C_1(v), \dots, C_N(v),Q)$ of the vector $(C_1, \dots, C_N,Q)$, independent
of all other random variables. The product along a path is { defined } recursively
by $L(\emptyset):=\mathrm{Id}$, the identity matrix, and
$ L(vi):= L(v)C_i(v)$.
Given a random variable $X$, we assign to each node also a copy $X(v)$ of $X$,
independent of all other random variables.
Then $$Y_n := \sum_{\abs{v}=n} L(v) X(v) + \sum_{k=0}^{n-1} \sum_{\abs{v}=k}
L(v) Q(v)$$ is called the weighted branching process associated with $X,(C_1,
\dots, C_N,Q)$. It is easy to see that if $X$ has distribution $\mu$, then $\S^n(\mu)=\mathcal{L}(Y_n)$.
Moreover, the weights $(L(v))_{\abs{v}=n}$ are dependent, but identically distributed with the same distribution as $\Pi_n$.

In many cases (see e.g. \cite{BDG2011, JO2010b, JO2010a, Mirek2011a})  one can prove that the sequence $Y_n$ converges pointwise and its limit provides a solution of the smoothing transform. However in order to prove existence theorems in full generality, one has to consider $\S$ as an operator on an appropriate complete metric space and apply the Banach fixed point theorem. The details are worked out in Appendix \ref{sect:EU}.

\section{Transfer operators and change of measure}\label{sect:Ts}
This section and the next section are devoted to the partial  proof of our main results, { Theorems \ref{main theorem} and \ref{main theorem2}}. 
We prove existence of the limit, and the positivity of the constant is postponed to Section \ref{sect:positivity}. 
Thus we assume that the $C_i$'s are in $GL(d, \R)$ and satisfy  \eqref{irred}, \eqref{density} or that the $C_i$'s are in $M(d \times d, \R)$ and satisfy \eqref{minorization} and \eqref{notvanish}. 

First, we study a family of transfer operators introduced by the action of $C$ on the sphere  $S$, and derive harmonic functions, which allow for a change of measure.
 The results of this section are mainly technical, however this is the main tool that will be used in the next section
 when applying the Markov Renewal Theorem. As a by-product, we prove holomorphicity of $m(s)$.

 \subsection{Transfer operators}

For $0\le s<s_\8$ we introduce the family of operators on continuous functions on the sphere, $T_s : C(S) \to C(S)$, defined by
$$ T_s f(x) := \E\big[f(\esl{x C} )\abs{x C }^s\big] .$$
{ They are well defined as mappings on $C(S)$ since $C \in GL(d, \R)$ resp. \eqref{notvanish} holds.}
One can easily check that powers of these  operators are given by 
$$ T_s^n f(x) := \E\big[f(\esl{x \Pi_n} )\abs{x \Pi_n }^s\big] .$$

Observe that $T_0$ is a Markov transition operator on $S$, with associated transition kernel defined by $P(x,A):=\Prob((x\Pi_{1})^{\sim}\in A)$ for $x\in S$ and measurable $A\subset S$. For compact subsets $D$ of $GL(d,\R)$, we further define the substochastic kernels $P_{D}(x,\cdot):=\Prob((x\Pi_{1})^{\sim}\in\cdot,\Pi_{1}\in D)$.
{ Then from assumptions \eqref{irred} and \eqref{density}, we may derive a property very similar to \eqref{minorization}:}

\begin{lemma}\label{corr:MC}
Suppose \eqref{irred} and \eqref{density}. There is $n \in \N$, $p>0$, a compact subset $D$ of $GL(d,\R)$ and a probability measure $\phi$ with $\supp (\phi) = S$ such that the minorization condition
\begin{equation}\label{MC} \tag{\textsf{MC}}
P^n(y, \cdot) \ge P_D^n(y, \cdot) \ge p \cdot \phi
\end{equation}
holds for all $y \in S$.
\end{lemma}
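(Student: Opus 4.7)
The plan is to combine the smoothing from \eqref{density} with the reachability from \eqref{irred} in three phases: extract a locally uniform absolutely continuous minorization of $P^{n_0}$; convert it into a uniform-in-$y$ Doeblin minorization for some power $P^n$; and finally pass to the restricted kernel $P_D^n$. Throughout, $\lambda_S$ denotes the surface measure on $S$ and $B_S(x,r)$ the open spherical ball of radius $r$ around $x\in S$.

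For the first phase, set $D_0:=\overline{B_\delta(A)}$. The map $g\mapsto\esl{yg}$ is a submersion from $GL(d,\R)$ to $S$, so for every $y\in S$ the pushforward of $\lambda^{d\times d}\big|_{B_\delta(A)}$ admits a continuous density with respect to $\lambda_S$, bounded below on an open neighbourhood of $\esl{yA}$. Continuity in $y$ together with compactness of $S$ yield uniform $\eta,c_1>0$ with
\[
 P^{n_0}(y,\cdot)\ \ge\ c_1\,\lambda_S\big|_{B_S(\esl{yA},\eta)},\qquad y\in S.
\]

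For the second phase, fix a reference point $x_0\in S$ and an open neighbourhood $U\ni x_0$ small enough that $B_S(\esl{zA},\eta)\supset B_S(\esl{x_0 A},\eta/2)$ for every $z\in U$. From \eqref{irred}, lower semicontinuity of $y\mapsto P^k(y,U)$ (valid since $U$ is open), and compactness of $S$, one finds $N\in\N$ and $c_2>0$ such that for each $y\in S$ some $k(y)\le N$ satisfies $P^{k(y)}(y,U)\ge c_2$. Composing with phase~1 gives
\[
 P^{k(y)+n_0}(y,\cdot)\ \ge\ c_1c_2\,\lambda_S\big|_{B_S(\esl{x_0 A},\eta/2)},
\]
and applying a further $N$ iterations of $P$ to both sides propagates this lower bound: the family $\{\lambda_S|_{B_S(\esl{x_0A},\eta/2)}\cdot P^j\}_{0\le j\le N}$ consists of absolutely continuous measures (by reapplying phase~1) whose supports, by \eqref{irred}, eventually cover $S$. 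Consequently, after enough further iterations they uniformly dominate a fixed probability measure $\phi$ on $S$ with $\supp\phi=S$, giving
\[
 P^n(y,\cdot)\ \ge\ p_0\,\phi \qquad\text{for all }y\in S
\]
and some $n\in\N$, $p_0>0$.

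For the third phase, enlarge $D_0$ to a compact $D\subset GL(d,\R)$ with $\P{C\notin D}$ as small as we like (possible by $\sigma$-compactness of $GL(d,\R)$). A union bound gives $P^n(y,\cdot)-P_D^n(y,\cdot)\le n\,\P{C\notin D}$ in total variation; by arranging $\phi$ to be absolutely continuous with density bounded away from zero on an open set (which can be achieved by convolving with one extra $n_0$-block using phase~1), this error can be absorbed to yield $P_D^n(y,\cdot)\ge p\,\phi$ for $p=p_0/2$. The main obstacle is the time-uniformization in phase~2: the exponent $k(y)+n_0$ depends on the starting point, and converting this into a minorization for a single power $P^n$ requires carefully propagating the absolutely continuous density from phase~1 under further iterations of $P$, using \eqref{irred} to guarantee that the resulting measures jointly dominate a single full-support probability measure, independently of~$y$.
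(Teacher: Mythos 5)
Your proposal takes a fundamentally different route from the paper: the paper's proof begins from \cite[Lemma 2.1]{AM2010}, which already provides a minorization that is \emph{uniform in the starting point} $y\in S$, namely $P_{D_x}^{n_x}(y,\cdot)\ge p_x\phi_x$ for all $y$, with $\supp\phi_x$ a small ball around $x$. The remaining work in the paper is then purely combinatorial: cover $S$ by finitely many such balls, pass to $n=\mathrm{LCM}(n_{x_1},\dots,n_{x_k})$, and sum over all admissible chainings $\omega\in L$ (with $\omega_n\ne 0$) to land a lower bound of $p\cdot\phi$ with $\phi=k^{-1}\sum\phi_i$ of full support. You instead try to rebuild the whole minorization directly from \eqref{density} and \eqref{irred}, which is more ambitious but leaves two genuine gaps.

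The first gap is the time-uniformization you yourself flag at the end. After your phase 2 you have $P^{k(y)+n_0}(y,\cdot)\ge c_1c_2\,\lambda_S|_{B_S(\esl{x_0A},\eta/2)}$, where the exponent $k(y)+n_0$ varies over finitely many values with $y$. To replace this by a single power $P^n$ you assert that the family $\{\lambda_S|_{B_S(\esl{x_0A},\eta/2)}\cdot P^j\}_{0\le j\le N}$ ``eventually uniformly dominates a fixed full-support $\phi$.'' But \eqref{irred} only guarantees, for each fixed open set, that \emph{some} power has positive probability of reaching it; it gives neither aperiodicity nor any uniformity across the different offsets $j$. The pushforwards $\lambda_S|_{\mathrm{ball}}\,P^j$ for different $j$ could have disjoint or drifting supports, so a single $\phi$ minorizing them all for one common $n$ is not forthcoming from what you have written. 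This is precisely the difficulty that the paper's LCM-and-chaining construction is designed to circumvent: by requiring $\omega_n\ne 0$ in the definition of $L$, the paper guarantees that every admissible chain finishes with a genuine $\phi_{x_{\omega_n}}$-minorization, and by summing over all chains it gets full support. You would need to supply an analogous argument, not merely observe that one is needed.

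The second gap is in phase 3. You claim $P^n(y,\cdot)-P_D^n(y,\cdot)\le n\,\P{C\notin D}$ in total variation, which is correct, and then propose to ``absorb'' this error to obtain $P_D^n(y,\cdot)\ge p\,\phi$. This does not follow: the error measure $P^n(y,\cdot)-P_D^n(y,\cdot)$ is a positive measure of small total mass, but a bound on its total mass does not prevent it from being concentrated exactly on a small set where $\phi$ also puts mass, in which case $P_D^n(y,A)\ge P^n(y,A)-n\P{C\notin D}$ can be negative for small $A$ and certainly cannot dominate $p\phi(A)$. Making $\phi$ absolutely continuous with bounded-below density does not help, because the error measure can still have a singular or highly concentrated component. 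The paper sidesteps this entirely because the cited \cite[Lemma 2.1]{AM2010} minorization is already formulated for the substochastic kernels $P_{D_x}^{n_x}$, so the restriction to a compact set is built in from the start and no a posteriori absorption is needed. If you want to push your phase-1/2 argument through, you should carry the compact set $D$ along from the very beginning (i.e., work with $P_D$ rather than $P$ throughout), which is exactly what the paper's use of $P_{D_x}$ achieves.
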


\begin{proof}
A weaker version of the lemma was proven in \cite{AM2010} (Lemma 2.1): {\it
 there exists for each $x\in S$ a compact subset
$D_x$ of $GL(d,\R)$ and $\delta_x,p_x>0$, $n_x\in\N$, and a probability measure $\phi_x$  with $\supp (\phi_x) =B_{\delta_x}(x)$ such that
\begin{equation}
P^{n_x}(y,\cdot) \ge P_{D_x}^{n_x}(y,\cdot) \ge p_x\phi_x
\end{equation}
for all $y\in S$. }

Therefore, for each $x \in S$ there is an open ball $B_{\delta_x}$ and a probability measure $\phi$ on it.
This is an open covering of the compact set $S$, choose a finite covering around points $x_1, \dots, x_k$.
Set $n:=\mathrm{LCM}(n_{x_1}, \dots, n_{x_k})$, and define
$$ L := \left\{ (\omega_1, \dots, \omega_n) \in \{0, n_{x_1}, \dots, n_{x_k}\}^n
\ : \ \sum_{i=1}^n \omega_i = n, \quad \omega_n \neq 0 \right\}.$$
For $\omega \in L$, set
$$D_\omega :=\{ M_1\cdot \ldots\cdot M_n \ : \ M_1 \in D_{x_{\omega_1}},  \dots, M_n
\in D_{x_{\omega_n}} \} ,$$ with $D_0$ containing only the identity matrix, and
$D := \bigcup_{\omega \in L} D_\omega$. Observe that this union is finite, so
$D$ is still compact. With $p_0 :=1$ define { $ p_\omega := \prod_{i=1}^n p_{x_i}^{\omega_i} >0$} and
$p :=\frac{1}{\abs{L}} \min_{\omega \in L}
p_\omega > 0$.
Finally, with $\phi:=k^{-1}( \phi_1 + \dots + \phi_k)$, we have
$$P^n_D(y, \cdot) \ge  \frac{1}{\abs{L}}  \sum_{\omega \in L}
P^{\omega_1}_{D_{x_{\omega_1}}} \otimes \ldots \otimes
P^{\omega_n}_{D_{x_{\omega_n}}}(y, \cdot) \\
\ge  \sum_{\omega \in L} \frac{1}{\abs{L}} p_\omega \phi_{x_{\omega_n}} \\
\ge  p \phi.$$
\end{proof}


\begin{remark} \label{rem:Doeblin}
It is a direct consequence of the existence and uniqueness results obtained in Appendix \ref{sect:EU}
  that we may as well study iterated versions of the fixed point equation, i.e.
$$ R \eqdist \sum_{\abs{v}=n}L(v) R(v) + \sum_{k=0}^{n-1} \sum_{\abs{v}=k} L(v)
Q(v) ,$$ where the matrices $(L(v))_{\abs{v}=n}$ are not independent, but
identically distributed, with the same distribution as $\Pi_n$, and { $(R(v))_{v \in \mathcal{T}}$ are iid with the same distribution as $R$ and independent of $(L(v),Q(v))_{v \in \mathcal{T}}$.} Thus, we may w.l.o.g. assume that { if \eqref{irred} and \eqref{density} hold, then }
\begin{quote}
\eqref{MC} holds with $n=1$
\end{quote}
- which we will do from now on.
\end{remark}

This result now allows us to deduce the following properties of $T_s$ from the results in \cite{AM2010}:
\begin{proposition}\label{prop:properties es nus}
{ Let conditions \{\eqref{irred}, \eqref{density}\} or \{\eqref{notvanish}, \eqref{minorization}\} hold,} and $0 \le s < s_\infty.$ Then the spectral radius and the dominant eigenvalue of $T_s$ are equal to $\kappa(s):=\frac1N m(s)$. There is a unique strictly positive continuous function $e_s:S\mapsto(0,\8)$ and a unique probability measure $\nu_s$ with $\supp(\nu_s)=S$ such that
$$ T_s \nu_s = \kappa(s) \nu_s, \quad T_s e_s = \kappa(s) e_s, \quad \int_S e_s(x) \nu(dx)=1 .$$
Moreover, $e_s(x)=e_s(-x)$ for all $x \in S$.
\end{proposition}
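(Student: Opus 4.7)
The plan is to treat this as a direct application of Perron--Frobenius type spectral theory for positive operators on $C(S)$, with the minorization condition serving as the Doeblin-type input that drives everything. The authors themselves indicate this route (``this result now allows us to deduce the following properties of $T_s$ from the results in \cite{AM2010}''), so the task is really to assemble and verify the prerequisites. In both hypothesis packages, \{\eqref{irred},\eqref{density}\} or \{\eqref{notvanish},\eqref{minorization}\}, the Markov kernel $P$ on $S$ associated with $T_0$ admits a uniform minorization $P^n(y,\cdot) \ge p\,\phi(\cdot)$ with $\supp \phi = S$: in the first case this is Lemma \ref{corr:MC}, and Remark \ref{rem:Doeblin} lets us assume $n=1$; in the second case \eqref{minorization}, restricted to its $S$-component, gives it directly.

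First I would check that $T_s$ is a well-defined, bounded, positive operator on $C(S)$ for $0 \le s < s_\infty$. Continuity of $T_s f$ comes from dominated convergence, provided $\mathbb{P}(xC = 0) = 0$ for every $x \in S$; this holds because $C \in GL(d,\R)$ in the first case and because of \eqref{notvanish} in the second. Boundedness is immediate from $|T_s f(x)| \le \|f\|_\infty\, \E\|C\|^s < \infty$ for $s < s_\infty$. At this point I would invoke the Birkhoff--Hopf / Krein--Rutman machinery from \cite{AM2010}: the minorization yields a uniform contraction of $(\text{dominant eigenvalue})^{-n} T_s^n$ in Hilbert's projective metric on the positive cone, producing a simple dominant eigenvalue $\lambda_s$ with a unique strictly positive continuous eigenfunction $e_s$ and a unique probability eigenmeasure $\nu_s$ (after the normalization $\int e_s\, d\nu_s = 1$), together with a spectral gap that will be needed in Section \ref{sect:implicit renewal}. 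Strict positivity of $e_s$ follows from $\lambda_s^n e_s(x) = T_s^n e_s(x) \ge p \int e_s\, d\phi > 0$, and $\supp \nu_s = S$ is inherited from $\supp \phi = S$ via the adjoint minorization.

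Next I would identify $\lambda_s$ with $\kappa(s) = m(s)/N$. Iterating the eigenvalue equation gives $\lambda_s^n e_s(x) = \E\bigl[|x\Pi_n|^s e_s(\esl{x\Pi_n})\bigr]$; since $e_s$ is strictly positive and continuous on the compact $S$, $0 < c_1 \le e_s \le c_2$, so $\lambda_s^n \asymp \E|x\Pi_n|^s$ uniformly in $x$. The upper bound $\lambda_s \le m(s)/N$ is then immediate from $\E|x\Pi_n|^s \le \E\|\Pi_n\|^s$ together with Gelfand's formula. For the matching lower bound one uses the minorization to steer, in a bounded number $k$ of steps and uniformly in $x \in S$, the distribution of $\esl{x\Pi_k}$ to a measure charging the direction that nearly realizes $\|\Pi_n\|$, yielding $\E|x\Pi_{n+k}|^s \ge c \,\E\|\Pi_n\|^s$ and hence $\lambda_s \ge m(s)/N$.

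Finally, the symmetry $e_s(-x) = e_s(x)$ follows because the antipodal involution $\sigma(x) := -x$ commutes with $T_s$: since $\esl{-xC} = -\esl{xC}$ and $|{-xC}|^s = |xC|^s$, one has $T_s(f\circ \sigma) = (T_s f)\circ \sigma$. Hence $e_s \circ \sigma$ is again a strictly positive continuous eigenfunction for $\kappa(s)$ with $\int (e_s\circ\sigma)\,d\nu_s = \int e_s\,d\nu_s = 1$ (the measure $\nu_s$ is likewise fixed by $\sigma$ after pushforward and uniqueness), and uniqueness forces $e_s\circ\sigma = e_s$. The main obstacle I anticipate is the quantitative transfer from the minorization of the unweighted kernel $P$ to the weighted operator $T_s$: the minorizing measure must lie in a region where the weight $|xC|^s$ is bounded below, which is exactly why the compactness of $D$ in Lemma \ref{corr:MC} and the explicit control over $\log|xC|$ in \eqref{minorization} were built into the hypotheses.
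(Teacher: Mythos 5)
Your proposal is correct and follows essentially the same route as the paper: use the minorization \eqref{MC} (with $n=1$, via Remark \ref{rem:Doeblin}) resp.\ \eqref{minorization} to show $T_s$ is strictly positive with lower bound $cp\,\phi$ (controlling the weight $|xg|^s$ from below by compactness of $D$ resp.\ the control over $\log|xC|$), then cite the Perron--Frobenius/quasi-compactness results of \cite{AM2010} (Lemmas 5.2--5.4) for the dominant eigenvalue $\kappa(s)=m(s)/N$, the eigenpair $(e_s,\nu_s)$, and the symmetry of $e_s$, and finally push the minorization through the adjoint to get $\supp\nu_s=S$. You flesh out two steps the paper merely cites -- the Gelfand-type identification $\kappa(s)=m(s)/N$ and the argument that $e_s$ commutes with the antipodal map -- which is a reasonable elaboration rather than a different method.
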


\begin{proof}
By the minorization condition \eqref{MC} with $n=1$, there is a compact subset $D$, and $\phi, p$ such that
\begin{multline*}
T_s f(x) 
\ge  \Erw{f(\esl{xC} \abs{xC}^s) \1[D](C)} \\
\ge  \min \{\abs{xg}^s \ : x \in S, g \in D, s \in I\} \cdot \Erw{f(\esl{xC} \1[D](C)} \\
=  c \int f(y) \Pkern_D(x,dy)
\ge  cp \int_{S} f(y) \phi(dy) .
\end{multline*}
{ The same follows from \eqref{minorization}.}
This shows that the operator $T_s$ is strictly positive, i.e. it maps a nonzero nonnegative continuous function to a strictly positive function on $S$. Now \cite[Lemma 5.2]{AM2010} states the existence of a dominant eigenvalue $\kappa(s)$, equal to the spectral radius; as well as the existence of the corresponding eigenmeasure. In \cite[Lemma 5.3]{AM2010}, it is shown that $\kappa(s)=\frac{1}{N}m(s)$. Properties of the eigenfunction $e_s$ are shown in \cite[Lemma 5.4]{AM2010}. The assertion about the support of $\nu_s$ is a direct consequence of the above calculation, since for all $f \in C(S)$, $f \ge 0, f \neq 0$,
$$  \kappa(s) \int_S f(x) \nu_s(dx) = \int_S (T_s f)(x) \nu_s(dx) \ge cp \int_S f(y) \phi(dy) >0 ,$$
for $\supp(\phi)=S$.
\end{proof}

As an observation, we note that along the same lines, one can also obtain a uniform minorization for the eigenmeasures $\nu_s$:

\begin{lemma}\label{lemma:supp:nu}
Let $I$ be a compact subset of $[0, s_\infty)$. Then there is
$p>0$ and a probability measure $\phi$ with support $S$,
such that
$ \nu_s \ge p \cdot \phi$ for $s\in I$.
In particular, $\supp(\nu_s)=S$ for all $s \in [0, s_\infty)$.
\end{lemma}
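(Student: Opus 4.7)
The plan is to revisit the calculation in the proof of Proposition \ref{prop:properties es nus} and track the dependence on $s$, showing that the constants can be chosen uniformly on any compact $I \subset [0, s_\infty)$.

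Concretely, recall the chain of inequalities in that proof: under \{\eqref{irred}, \eqref{density}\} (using \eqref{MC} with $n=1$ after Remark \ref{rem:Doeblin}) or \{\eqref{notvanish}, \eqref{minorization}\}, for every $f \in C(S)$ with $f \ge 0$ one obtains
\begin{equation*}
T_s f(x) \;\ge\; c(s)\, p \int_S f(y)\, \phi(dy),
\end{equation*}
where, in the first setup, $c(s) := \min\{\abs{xg}^s : x \in S,\ g \in D\}$, and $D \subset GL(d,\R)$ is compact so that $\min\{\abs{xg} : x \in S,\ g \in D\} > 0$ by invertibility. (In the setup of \eqref{minorization}, the analogous constant is obtained by integrating $\abs{xC}^s$ over the support of $\psi$, which is bounded below away from $0$ after restricting to a suitable compact piece of $\supp \psi$.) In particular $s \mapsto c(s)$ is continuous and strictly positive, so $c_I := \inf_{s \in I} c(s) > 0$ on any compact $I \subset [0, s_\infty)$.

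Integrating against $\nu_s$ and using $T_s^* \nu_s = \kappa(s) \nu_s$ yields
\begin{equation*}
\kappa(s) \int_S f\, d\nu_s \;=\; \int_S T_s f \, d\nu_s \;\ge\; c_I\, p \int_S f\, d\phi,
\end{equation*}
valid for every nonnegative $f \in C(S)$ and every $s \in I$. Since $\kappa(s) = m(s)/N$ and $m$ is convex on $[0, s_\infty)$, it is continuous there, hence $K_I := \sup_{s \in I} \kappa(s) < \infty$. Setting $p_I := c_I\, p / K_I > 0$, we conclude
\begin{equation*}
\nu_s \;\ge\; p_I\, \phi \qquad \text{for all } s \in I,
\end{equation*}
which is the desired uniform minorization (with $\phi$ the same measure as in Proposition~\ref{prop:properties es nus}, having $\supp(\phi) = S$).

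The ``in particular'' statement follows immediately: for any $s_0 \in [0, s_\infty)$, apply the conclusion to $I = \{s_0\}$ (or any compact neighborhood of it) to obtain $\nu_{s_0} \ge p_{s_0} \phi$ with $\supp(\phi) = S$, forcing $\supp(\nu_{s_0}) = S$. The only mild obstacle is checking that the $s$-dependent constant $c(s)$ is uniformly bounded below on $I$; this reduces to continuity of $s \mapsto c(s)$ on $[0, s_\infty)$, which is immediate from the definition of $c(s)$ as a minimum of $\abs{xg}^s$ over a compact set where $\abs{xg}$ is bounded away from $0$ and $\infty$.
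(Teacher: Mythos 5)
Your proof is correct and matches the approach the paper intends: the lemma is stated right after Proposition \ref{prop:properties es nus} with the remark that it follows ``along the same lines,'' and indeed the paper's own display there already takes the minimum over $s \in I$ in the constant $c$. You make explicit the two uniformity checks that the paper leaves implicit (the lower bound $c_I>0$ and the upper bound $K_I = \sup_{s\in I}\kappa(s)<\infty$ needed when dividing through by $\kappa(s)$), which is exactly the right bookkeeping.
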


\subsection{Markov Random Walks and Change of Measure}
A main ingredient in the proofs in \cite{JO2010b,JO2010a} was application of a
suitable renewal theorem. We will use the Markov renewal theorem of
\cite{Alsmeyer1997} (see Section \ref{sect:implicit renewal}), which deals with
\emph{Markov Random Walks}: Let $(X_n, U_n)_{n \ge 0}$ be a temporally homogeneous Markov chain on $S
\times \R$ such that
$$ \P{(X_{n+1}, U_{n+1}) \in A \times B | X_n, U_n} = \tilde{P}(X_n, A \times B) \quad
\mathrm{a.s.} $$
for all $n \ge 0$ and a transition kernel $\tilde{P}$ from $S$ to $S \times \R$. Then the associated sequence
$(X_n, V_n)_{n \ge 0}$ with $V_n = V_{n-1} + U_n$ is also a Markov chain and
called the Markov Random Walk with \emph{driving chain} $(X_n)_{n \ge 0}$.

We will study the Markov Random Walk defined by
$$ (X_n, V_n) := (\esl{X_0 \Pi_n}, \log \abs{X_0 \Pi_n}) \eqdist
(\esl{X_{n-1}C}, V_{n-1} + \log \abs{X_{n-1}C}) .$$ Its initial distribution is noted via $\Prob_x(X_0=x, V_0=0)=1$.

Notice that

\begin{equation}
\Pkern[\beta] f(x,t) := \frac{N}{e_\beta(x)e^{\beta t}} \Erw[x]{f(X_1,U_1+t)e_\beta(X_1)e^{\beta
(U_1+t)}}
\end{equation}
is a Markov transition kernel, since {
$$ \Pkern[\beta] \1 (x,t)= \frac{N}{e_\beta(x)e^{\beta t}} \Erw[x]{\1[{S\times \R}](X_1,U_1+t)e_\beta(X_1)e^{\beta(U_1 +t)}} = \frac{N}{e_\beta(x)} T_\beta e_\beta(x) = 1 .$$}
The associated
probability  measure on the path space, $^\beta\Prob$ and the related expectation $\E_x$  are defined by
\begin{align}
&^\beta\Erw[x]{f(X_0,V_0,X_1, V_1, \dots, X_n, V_n)}\nonumber \\
&\hspace{1cm}:= \frac{N^n}{e_\beta(x)} \E_{x}\Big(e_\beta(X_n)e^{\beta V_n} f
\left(X_0,V_0,X_1, V_1, \dots, X_n, V_n \right)\Big), \label{Def:transformiertesMass}
\end{align}
for all bounded continuous functions $f$ and all $n \geq 0$.

The transition operator of the driving chain $(X_n)_{n \ge 0}$ under $^\beta\Prob$ is then given by
\begin{equation}\label{eq:phat}
^\beta\hat{P}f(x) = \frac{N}{e_\beta(x)} (T_\beta e_\beta f)(x).
\end{equation} From the minorization condition \eqref{MC} with $n=1$, { resp. \eqref{minorization}} one obtains directly that it satisfies the Doeblin condition, i.e. for all $x \in S$
$$ ^\beta\hat{P}(x,\cdot) \ge c \cdot \phi( \cdot),$$
for the probability measure $\phi$ as defined in \eqref{MC}. Thus it is in particular positive Harris recurrent.

By the regeneration procedure of Athreya and Ney \cite{Athreya1978}, there is a
sequence of random times $(\sigma_n)_{n \ge 1}$, called regeneration epochs,
such that for each $k \ge 1$, $(X_{\sigma_k +n})_{n \ge 0}$ is independent of $(X_j)_{0 \le j \le \sigma_k
-1}$ with distribution $\P[\phi]{(X_n)_{n \ge 0} \in \cdot}$ (see
\cite[Lemma 4.1]{AM2010}), with $\phi$ defined in Corollary \ref{corr:MC} { resp. given by \eqref{minorization}}.

The following lemma is a consequence of the minorization in \eqref{MC} via the substochastic kernels $\Pkern_D$:

\begin{lemma}[{\cite[Lemma 5.6]{AM2010}}]\label{lem:bounded increments}
We can choose a sequence of regeneration epochs $(\sigma_n)_{n \ge 0}$, such
that there is a finite interval $I \subset \R$ with $U_{\sigma_n} \in I$ $\Pfs$
for all $n \ge 1$.
\end{lemma}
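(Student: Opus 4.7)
The plan is to exploit the fact that the minorization \eqref{MC} arises from the \emph{substochastic} kernel $P_D$ associated with a compact subset $D$ of $GL(d,\R)$. By Remark \ref{rem:Doeblin} we may assume \eqref{MC} holds at step $n=1$, so $P(y,\cdot) \ge P_D(y,\cdot) \ge p\,\phi(\cdot)$ uniformly in $y$. The key observation is that on the event $\{\Pi_1 \in D\}$, the compactness of $D$ in $GL(d,\R)$ forces
$$
0 \;<\; \inf_{y \in S,\, g \in D} |yg| \;\le\; |y\Pi_1| \;\le\; \sup_{g \in D}\|g\| \;<\; \infty,
$$
the lower bound being strictly positive because $g \mapsto \|g^{-1}\|$ is continuous on $GL(d,\R)$. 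Hence, whenever $\Pi_1 \in D$, the increment $U_1 = \log|y\Pi_1|$ lies in a fixed bounded interval $I \subset \R$, independently of $y \in S$. Under \eqref{notvanish}--\eqref{minorization} the analogue is even easier, since one only has to restrict the nonarithmetic distribution $\psi$ to a bounded subinterval on which it has positive mass.

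The next step is to upgrade the marginal minorization for the driving chain to a \emph{joint} minorization for $(X_1, U_1)$ under the shifted measure ${}^\beta\Prob$. Starting from the definition \eqref{Def:transformiertesMass} and restricting the underlying expectation to $\{\Pi_1 \in D\}$, the continuity and strict positivity of $e_\beta$ on the compact sphere together with the uniform two-sided bounds on $|y\Pi_1|$ derived above furnish a constant $\hat p > 0$ and a probability measure $\hat\phi$ supported on $S \times I$ such that
$$
{}^\beta\Prob_x\big((X_1,U_1)\in A\big) \;\ge\; \hat p\, \hat\phi(A) \qquad \forall\, x \in S,\ A \subset S \times \R,
$$
with $\hat\phi(\cdot \times \R)$ proportional to $\phi$. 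This is the joint analogue of the Doeblin condition derived from \eqref{eq:phat}, but keeping track of the $U$-coordinate.

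Finally, one applies the Athreya--Ney splitting, not to the driving chain alone, but to the Markov chain $(X_n, U_n)$ on the augmented state space $S \times \R$ under ${}^\beta\Prob$, using the joint minorization from the previous step. This produces regeneration epochs $(\sigma_n)_{n \ge 0}$ at which the pair $(X_{\sigma_n}, U_{\sigma_n})$ is drawn independently of the past from $\hat\phi$. Since $\hat\phi$ is concentrated on $S \times I$ with $I$ bounded, we obtain $U_{\sigma_n} \in I$ ${}^\beta\Prob$-a.s., which is the claim. Because the $X$-marginal of $\hat\phi$ is proportional to $\phi$, the resulting $(\sigma_n)$ are a valid choice for the regeneration construction of \cite[Lemma 4.1]{AM2010} cited just before the lemma. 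The main obstacle is the joint minorization: the standard Athreya--Ney construction is formulated for the driving chain only, and one must verify that carrying it out on the augmented chain $(X_n,U_n)$ preserves the independence properties needed later for the Markov renewal argument. Once the joint minorization is in place, the boundedness of $U_{\sigma_n}$ follows automatically from the compactness of $D$ in $GL(d,\R)$.
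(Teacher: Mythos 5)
Your core observation is the right one and matches the rationale the paper gives in the preamble to the lemma: the minorization factors through the substochastic kernels $P_D$ with $D$ compact in $GL(d,\R)$, and on $\{\Pi_1 \in D\}$ the ratio $|y\Pi_1|$ is uniformly bounded away from $0$ and $\infty$ (the two-sided estimate you give, using $\sup_{g\in D}\|g\|<\infty$ and $\sup_{g\in D}\|g^{-1}\|<\infty$, is correct). Your treatment of the \eqref{minorization} case --- restricting $\psi$ to a bounded interval of positive mass to obtain a product minorization $\phi\otimes\psi|_J$ --- is also fine, since there the joint minorization is given.

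The gap is in the second step, for the \eqref{irred}+\eqref{density} case. You claim that the marginal minorization $P_D(y,\cdot)\ge p\,\phi$, together with the boundedness of $|y\Pi_1|$ on $\{\Pi_1\in D\}$ and the bounds on $e_\beta$, \emph{furnishes} a \emph{joint} minorization ${}^\beta\Prob_x\bigl((X_1,U_1)\in\cdot\bigr)\ge\hat p\,\hat\phi$. This does not follow. A marginal minorization plus a uniform support constraint says nothing about how the $U$-coordinate is distributed \emph{within} $I$ --- the conditional law of $\log|y\Pi_1|$ given $\esl{y\Pi_1}$ may depend on $y$ in an essential way. Concretely, if $\rho_y$ denotes the law of $(\esl{y\Pi_1},\log|y\Pi_1|)$ on $\{\Pi_1\in D\}$, one can have $\rho_y=\phi\otimes\delta_{f(y)}$ with $f$ injective: the $S$-marginal dominates $\phi$ and the support lies in $S\times I$, but no fixed probability measure on $S\times\R$ is dominated by every $\rho_y$. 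So the joint minorization you invoke is a genuine additional input, not a consequence of what you have; establishing it under \eqref{density} would require a co-area/submersion argument on the map $g\mapsto(\esl{yg},\log|yg|)$, which you do not carry out.

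The route the paper (following AM2010) takes avoids this entirely. One keeps the Athreya--Ney splitting on the driving chain $(X_n)$ alone, but refines the coupling so that the Bernoulli($p$) success event is realized \emph{inside} $\{\Pi_1\in D\}$: since $P_D(y,\cdot)\ge p\,\phi$, one can sample $\Pi_1$ and accept it as the regeneration step with (conditional) probability $p\,\frac{d\phi}{dP_D(y,\cdot)}(\esl{y\Pi_1})\,\1[D](\Pi_1)\le 1$, which simultaneously produces $X_1\sim\phi$ on the accept event and guarantees $\Pi_1\in D$, hence $U_{\sigma_n}\in I$. This uses only the marginal minorization via $P_D$ and the compactness of $D$, with no joint minorization required. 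Your construction, by contrast, changes the regeneration scheme to a split chain on $S\times\R$, and --- as you yourself flag in the last sentence --- would then also need a separate check that this alternative scheme supplies the independence structure that the Markov renewal extension in Section~7 of AM2010 relies on.
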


We then say that \emph{$V_n$ has bounded increments at regeneration epochs}. 

{
\begin{remark}
Under \eqref{minorization}, we even have the stronger property that $(X_{\sigma_k +n}, U_{\sigma_k +n})_{n \ge 0}$ are independent of $(X_j, U_j)_{0 \le j \le \sigma_k
-1}$ and have distribution $\P[\phi \otimes \psi]{(X_n, U_n)_{n \ge 0} \in \cdot}. $ This strong type of regeneration is studied in \cite{Athreya1978b}. Obviously, it implies that $V_n$ has bounded increments at regeneration epochs.
\end{remark} }

All in all, we have proven the following result, where the last assertion about $\pi$ is a direct consequence of the formula \eqref{eq:phat} for $^\beta\hat{P}$.

\begin{prop}\label{propertiesMRW}
Under the measure $^\beta\Prob$ as defined in \eqref{Def:transformiertesMass}, $(X_n, V_n)_{n \ge 0}$ is a Markov Random Walk, its driving chain $(X_n)$ is a Doeblin chain, thus positive Harris recurrent, and $V_n$ has bounded increments at regeneration epochs. The stationary distribution $\pi$ of $(X_n)$ is given by $$\pi(dx)=e_\beta(x) \nu_\beta(dx).$$
\end{prop}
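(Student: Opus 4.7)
The plan is to assemble the four assertions in order, drawing on machinery already constructed in Section \ref{sect:Ts}.

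First I would confirm that $^\beta\Prob$ really does define a probability on the path space of $(X_n, V_n)$, and that under it the process is Markov with transition kernel $\Pkern[\beta]$. The definition \eqref{Def:transformiertesMass} is a Doob $h$-transform based on the function $h(x,t):=e_\beta(x)e^{\beta t}$, which, because $N\kappa(\beta)=m(\beta)=1$, is a positive harmonic function for the semigroup of $(X_n,V_n)$ under $\Prob_x$. The computation already displayed in the excerpt shows $\Pkern[\beta]\mathbf{1}(x,t)=1$, so the finite-dimensional distributions are consistent; by Ionescu--Tulcea, $^\beta\Prob$ exists and is unique, and $(X_n,V_n)_{n\ge 0}$ is a Markov chain under it with kernel $\Pkern[\beta]$. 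The kernel is translation-invariant in the second coordinate (the increment $U_{n+1}=V_{n+1}-V_n$ depends only on $X_n$ and the transition), so $(X_n,V_n)$ is indeed a Markov random walk with driving chain $(X_n)$.

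Second, I would check the Doeblin property for $(X_n)$. Marginalising out the $V$-coordinate gives the transition operator \eqref{eq:phat}. Combining this with the minorisation $T_\beta f(x)\ge c\int f\,d\phi$ that was established inside the proof of Proposition \ref{prop:properties es nus} (from \eqref{MC} respectively \eqref{minorization}), and using the fact that $e_\beta$ is continuous and strictly positive on the compact sphere $S$, hence bounded above and below by positive constants, one immediately obtains $^\beta\hat{P}(x,\,\cdot\,)\ge c'\,\widetilde{\phi}(\,\cdot\,)$ uniformly in $x\in S$, where $\widetilde{\phi}(dy):=Z^{-1}e_\beta(y)\phi(dy)$. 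This is the Doeblin condition, which yields positive Harris recurrence.

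Third, the bounded increments property at regeneration epochs is Lemma \ref{lem:bounded increments}, which was imported from \cite[Lemma 5.6]{AM2010}. I would simply invoke it, noting that the regeneration epochs of the driving chain under $^\beta\Prob$ are of the same type as those used there (under \eqref{minorization} one even has the stronger product-form regeneration recalled in the remark after Lemma \ref{lem:bounded increments}).

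Fourth, for the stationary distribution, I would verify the invariance identity directly. Setting $\pi(dx):=e_\beta(x)\nu_\beta(dx)$ and applying \eqref{eq:phat},
\begin{equation*}
\int_S {}^\beta\hat{P}f(x)\,\pi(dx) \;=\; N\int_S T_\beta(e_\beta f)(x)\,\nu_\beta(dx) \;=\; N\kappa(\beta)\int_S e_\beta(x)f(x)\,\nu_\beta(dx) \;=\; \int_S f\,d\pi,
\end{equation*}
where the second equality uses that $\nu_\beta$ is the left eigenmeasure of $T_\beta$ with eigenvalue $\kappa(\beta)$ (Proposition \ref{prop:properties es nus}), and the third uses $N\kappa(\beta)=m(\beta)=1$. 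Normalisation $\pi(S)=1$ is the condition $\int e_\beta\,d\nu_\beta=1$ built into Proposition \ref{prop:properties es nus}; uniqueness of $\pi$ is automatic from Doeblin.

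The entire argument is assembly rather than novel work; the only non-trivial step is checking the Doeblin bound for $^\beta\hat{P}$, which is where one has to combine the minorisation of $T_\beta$ with uniform positivity of $e_\beta$. Everything else is a direct quotation or an eigenvalue computation.
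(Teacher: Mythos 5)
Your proposal is correct and follows essentially the same route as the paper: verify $\Pkern[\beta]\mathbf{1}=1$ so that \eqref{Def:transformiertesMass} defines a proper transition kernel; read off the driving-chain kernel \eqref{eq:phat}, and combine the minorization of $T_\beta$ with the uniform positivity of $e_\beta$ to obtain Doeblin (your normalized $\widetilde{\phi}$ versus the paper's $\phi$ is an inessential cosmetic choice); quote Lemma \ref{lem:bounded increments} for bounded increments; and verify $\pi$-invariance by the eigenmeasure identity $T_\beta^*\nu_\beta=\kappa(\beta)\nu_\beta$ together with $N\kappa(\beta)=1$ and the normalisation $\int e_\beta\,d\nu_\beta=1$.
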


\subsection{Properties of the operator $T_z$}

The final part of this section investigates further the mappings $s \mapsto \kappa(s)$ as well as $s \mapsto e_s$ and $s \mapsto \nu_s$. By means of a perturbation theorem for quasicompact operators, we will see that these mappings are holomorphic on some small ball around $\beta$. First, we show that $s \mapsto T_s$ { is in fact  a} holomorphic mapping.
For $0 < \Re z < s_\infty$, define the operator $T_z$ by
$$ T_z f(x) := \Erw{\abs{xC}^z f(\esl{xC})}.$$

A family of linear operators $T_z : B \to B$ is called \emph{weakly holomorphic}, if for every $x \in B$, $y \in B'$ { ($B'$ is the dual space of $B$)} the function {  $$ z \mapsto y(T_z x), $$ } is holomorphic. In our case $B=C(S)$, this is equivalent to being (strongly) holomorphic, see e.g. \cite[Exercise 8.E.]{Mujica1986}.

\begin{lemma}\label{Tz:holomorph}
The mapping $z \mapsto T_z$ is (strongly) holomorphic on the domain $0 < \Re z < s_\infty$.
\end{lemma}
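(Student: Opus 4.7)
My plan is to verify weak holomorphy of $z \mapsto T_z$ and then invoke the equivalence of weak and (strong/norm) holomorphy for operator-valued maps on $C(S)$, as already recorded in the statement of the lemma via \cite{Mujica1986}. Concretely, given any $f \in C(S)$, by the Riesz representation theorem every continuous linear functional on $C(S)$ is represented by a finite signed Borel measure $\mu$ on $S$, so it suffices to check that
$$
G(z) \;:=\; \int_S T_z f(x)\, \mu(dx) \;=\; \int_S \E\bigl[\abs{xC}^z f(\esl{xC})\bigr]\, \mu(dx)
$$
is holomorphic on $\Omega := \{ z \in \C : 0 < \Re z < s_\infty \}$.

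The key mechanism will be Morera's theorem, made applicable by a uniform integrable bound on compact substrips of $\Omega$. On a compact $K \subset \Omega$ with $s_0 := \min_{z \in K} \Re z$ and $s_1 := \max_{z \in K} \Re z$, the trivial inequality $\abs{xC} \le \norm{C}$ yields, for every $z \in K$ and $x \in S$,
$$
\bigl|\abs{xC}^z f(\esl{xC})\bigr| \;=\; \abs{xC}^{\Re z}\,|f(\esl{xC})| \;\le\; \norm{f}_\infty \bigl(\norm{C}^{s_0} + \norm{C}^{s_1}\bigr),
$$
and this majorant is $\Prob \otimes |\mu|$-integrable because $\E\,\norm{C}^s < \infty$ for $0 \le s < s_\infty$ and $|\mu|(S) < \infty$. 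Moreover, either $C \in GL(d,\R)$ or \eqref{notvanish} forces $\abs{xC} > 0$ a.s.\ for every $x \in S$, so for $\Prob$-a.e.\ $\omega$ and every $x \in S$ the map $z \mapsto \abs{xC(\omega)}^z f(\esl{xC(\omega)}) = e^{z \log \abs{xC(\omega)}}\, f(\esl{xC(\omega)})$ is entire in $z$.

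With these ingredients in place, the proof will proceed as follows. For any closed triangular contour $\Delta \subset \Omega$, the dominating bound above is also integrable against the finite Lebesgue measure on $\partial \Delta$, so Fubini's theorem permits interchanging $\oint_{\partial \Delta}$ with both $\int_S \cdot\, \mu(dx)$ and $\E$:
$$
\oint_{\partial \Delta} G(z)\, dz \;=\; \int_S \E\biggl[\oint_{\partial \Delta} \abs{xC}^z f(\esl{xC})\, dz \biggr] \mu(dx) \;=\; 0,
$$
by Cauchy's theorem applied pointwise to the entire inner integrand. Dominated convergence (with the same majorant) gives continuity of $G$ on $\Omega$, after which Morera's theorem establishes holomorphy. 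Since $f$ and $\mu$ were arbitrary, $z \mapsto T_z$ is weakly holomorphic and hence, by the cited equivalence, strongly holomorphic on $\Omega$.

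The only delicate step is producing the integrable dominating function; it is precisely the hypothesis $\Re z < s_\infty$ that supplies it, while $\Re z > 0$ together with $\abs{xC} > 0$ a.s.\ prevents singularities in $\abs{xC}^z$ at the origin. Everything else is a routine triple interchange of $\E$, $\int\! d\mu$, and $\oint\! dz$.
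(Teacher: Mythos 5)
Your proof is correct and follows essentially the same route as the paper: reduce to weak holomorphy via the Riesz representation of $C(S)'$, vanish the contour integral by Fubini and Cauchy's theorem applied pointwise to $z \mapsto \abs{xC}^z$, verify continuity, and invoke Morera plus the weak--strong equivalence from \cite{Mujica1986}. The paper's version (taken from \cite{GL2004}) skips the explicit dominating-function justification for the Fubini step and for continuity, which you supply; otherwise the arguments coincide.
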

\begin{proof} The proof that $T_z$ is holomorphic is taken from \cite[p. 253]{GL2004}: Let $\gamma$ be a closed path in the domain $0 <\Re z < s_\infty$, then for any bounded continuous function $f$ and finite measure $\nu$, we show that $ \int_\gamma \skalar{\nu, T_z f} dz =0$:
\begin{align*}
\int_\gamma \skalar{\nu, T_z f} dz = & \int_\gamma \int_S \int_{GL(d, \R)} f(\esl{Mx}) \norm{Mx}^z \P{C_1^\top \in dM} \nu(dx) dz \\
= & \int_S \int_{GL(d, \R)} f(\esl{Mx}) \left( \int_\gamma \norm{Mx}^z dz \right) \P{C_1^\top \in dM} \nu(dx),
\end{align*}
and the innermost integral is zero since $z \mapsto \norm{Mx}^z$ is holomorphic.

This, together with the fact that $z \mapsto \skalar{\nu, T_z f}$ is continuous, implies that $T_z$ is { weakly} holomorphic, thus already strongly holomorphic. \end{proof}

For a linear operator $T$ denote its spectral radius by $r(T)$. A linear operator $T$ on $C(S)$ is said to be \emph{quasi-compact}, if $C(S)$ can be decomposed into two $T$-invariant closed subspaces
$$ C(S) = F \oplus G,$$
where $r(T_{|G}) < r(T)$, while $\mathrm{dim} F < \infty$ and each eigenvalue of $Q_{|F}$ has modulus $r(T)$.

\begin{lemma}
The operator $T_\beta$ is quasi-compact.
\end{lemma}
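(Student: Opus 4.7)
The plan is to reduce quasi-compactness of $T_\beta$ to that of the normalized Markov operator $\hat{P}^\beta$ from \eqref{eq:phat}, and then to deduce the latter from the Doeblin minorization $\hat{P}^\beta(x,\cdot)\ge c\,\phi(\cdot)$ recorded immediately after \eqref{eq:phat}.

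First, I would observe that $T_\beta$ is similar, up to a positive scalar factor, to $\hat{P}^\beta$. Let $M:C(S)\to C(S)$ denote multiplication by $e_\beta$; by Proposition~\ref{prop:properties es nus}, $e_\beta$ is continuous and strictly positive on the compact sphere $S$, so both $M$ and $M^{-1}$ are bounded operators on $C(S)$. Rearranging \eqref{eq:phat}, and recalling $\kappa(\beta)=m(\beta)/N=1/N$, yields
$$ T_\beta \;=\; \kappa(\beta)\, M\, \hat{P}^\beta\, M^{-1}. $$
Since quasi-compactness is invariant under similarity transformations and under multiplication by nonzero scalars, it is enough to prove that $\hat{P}^\beta$ is quasi-compact on $C(S)$.

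Next, I would apply the classical fact that a Markov operator on $C(S)$ satisfying a Doeblin minorization is uniformly ergodic, and thus quasi-compact. Write $\hat{P}^\beta = c\,\Pi_\phi + (1-c)\,Q$, where $\Pi_\phi f := \bigl(\int f\,d\phi\bigr)\cdot\mathbf{1}$ is a rank-one Markov operator and $Q := (1-c)^{-1}(\hat{P}^\beta - c\,\Pi_\phi)$ is a Markov operator of norm at most $1$. Iterating and appealing to a standard coupling argument produces the uniform bound
$$ \bigl\|(\hat{P}^\beta)^n f \;-\; \pi(f)\cdot\mathbf{1}\bigr\|_\infty \;\le\; (1-c)^n\, \|f\|_\infty $$
for the unique stationary distribution $\pi = e_\beta\,\nu_\beta$ identified in Proposition~\ref{propertiesMRW}. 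Hence $(\hat{P}^\beta)^n$ converges in operator norm to the rank-one projection $\Pi f := \pi(f)\cdot\mathbf{1}$. This forces $\sigma(\hat{P}^\beta) \subseteq \{1\} \cup \overline{B(0,1-c)}$, with $1$ a simple eigenvalue whose eigenspace is $\mathbb{C}\cdot\mathbf{1}$, so that the splitting $C(S) = \mathbb{C}\cdot\mathbf{1} \oplus \ker\Pi$ realizes the quasi-compact decomposition of $\hat{P}^\beta$. Transporting through $M$ gives the decomposition $C(S) = \mathbb{C}\cdot e_\beta \oplus M(\ker\Pi)$ for $T_\beta$, with dominant eigenvalue $\kappa(\beta)$ and spectral radius on the complement bounded by $\kappa(\beta)(1-c) < \kappa(\beta)$.

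The only genuinely nontrivial ingredient is the uniform geometric ergodicity estimate extracted from the Doeblin condition; this is classical (see, e.g., the standard treatment of uniformly ergodic Markov chains), so no essential obstacle arises.
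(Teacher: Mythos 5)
Your argument is correct and follows essentially the same route as the paper: identify $T_\beta$ with $\kappa(\beta)\,M\,{}^\beta\hat{P}\,M^{-1}$ (multiplication by $e_\beta$), then extract the spectral gap from the Doeblin minorization of ${}^\beta\hat{P}$. The paper states the conjugation in the same form and invokes the Doeblin condition; you merely spell out the standard Doeblin-to-uniform-ergodicity-to-quasi-compactness step, which the paper leaves implicit.
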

\begin{proof}
With the results of Proposition \ref{prop:properties es nus}, what remains to show is the spectral gap property. But this follows, since $^\beta\hat{P}$ satisfies the Doeblin condition, and its spectral properties are one-to-one (resp. 1-to-$\frac1N$) with those of $T_\beta$, since for all $f \in C(S)$,
$$T_\beta f(x) = \frac{e_\beta}{N} {^\beta \hat{P}}(f/e_\beta)(x)  .$$
\end{proof}

Then we may apply the perturbation theorem \cite[Theorem III.8]{HH2001} and derive for our situation the following corollary:

\begin{corollary}\label{cor:perturbation}
There is $\delta >0$, such that for all $z \in B_\delta(\beta)$, $T_z$ has a simple dominating eigenvalue $\kappa(z)$, with eigenfunction $e_z$ and eigenmeasure $\nu_z$, and all mappings
$$ \kappa : B_\delta(\beta) \to \C, \quad  e_{\bullet} : B_\delta(\beta) \to C(S) \ \text{ and } \ \nu_{\bullet} : B_\delta(\beta) \to C(S)' $$
are holomorphic. In particular, $m=N\kappa$ is differentiable in $\beta$.
\end{corollary}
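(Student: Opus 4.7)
The plan is to invoke the Hennion--Hervé perturbation theorem \cite[Theorem III.8]{HH2001} directly at the point $z=\beta$. Its hypotheses are in place: the preceding lemma states that $T_\beta$ is quasi-compact, Proposition \ref{prop:properties es nus} ensures that $\kappa(\beta)=\frac{1}{N}m(\beta)=\frac{1}{N}$ is a \emph{simple} dominating eigenvalue (hence isolated from the rest of $\mathrm{spec}(T_\beta)$), and Lemma \ref{Tz:holomorph} provides the holomorphic (in particular norm-continuous) dependence $z\mapsto T_z$. Consequently, for all $z$ sufficiently close to $\beta$, the perturbation theorem yields a unique simple dominating eigenvalue $\kappa(z)$ of $T_z$, which depends holomorphically on $z$.

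To extract the eigenfunction and eigenmeasure in a holomorphic way, I would use the Riesz spectral projection
\[
\Pi_z := \frac{1}{2\pi i}\oint_{\Gamma}(\zeta-T_z)^{-1}\,d\zeta,
\]
where $\Gamma$ is a small positively oriented circle in $\C$ around $\kappa(\beta)$ enclosing no other point of $\mathrm{spec}(T_\beta)$. By continuity of the spectrum, for $\delta>0$ small and $z\in B_\delta(\beta)$, $\Gamma$ still separates $\kappa(z)$ from the rest of $\mathrm{spec}(T_z)$, so $\Pi_z$ is a well-defined rank-one projection. Joint holomorphy of $(\zeta,z)\mapsto(\zeta-T_z)^{-1}$ in the region where $\zeta\notin\mathrm{spec}(T_z)$ implies that $z\mapsto\Pi_z$ is holomorphic into the Banach algebra of bounded operators on $C(S)$. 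Since $\Pi_\beta e_\beta=e_\beta\neq 0$, the function $e_z:=\Pi_z e_\beta$ is nonzero for $z$ near $\beta$ and provides a holomorphic family of eigenfunctions: $T_z e_z=\kappa(z) e_z$. Analogously, applying the construction to the adjoint $T_z^{*}$ with a suitably chosen functional (starting from $\nu_\beta$) yields a holomorphic family of eigenmeasures $\nu_z$, and a final rescaling $e_z\mapsto e_z/\nu_z(e_z)$ (which is holomorphic since $\nu_\beta(e_\beta)=1\neq 0$) enforces the normalization $\nu_z(e_z)=1$.

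The identity $\kappa(z)=\nu_z(T_z e_z)$ exhibits $\kappa$ as holomorphic once the holomorphy of $e_z$ and $\nu_z$ is established, so $m=N\kappa$ is holomorphic in a neighborhood of $\beta$ and in particular (real-)differentiable at $\beta$. The main technical point is not the existence of a holomorphic perturbation — which is automatic from quasi-compactness plus analyticity of $z\mapsto T_z$ — but rather the \emph{simultaneous} holomorphic normalization of eigenfunctions and eigenmeasures; this is the reason for going through the spectral projection $\Pi_z$, whose joint holomorphy trivially propagates to both objects and spares us any \emph{ad hoc} renormalization argument.
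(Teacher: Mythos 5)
Your proof is correct and follows essentially the same route as the paper: the paper's own proof consists of invoking the Hennion–Hervé perturbation theorem \cite[Theorem III.8]{HH2001}, relying exactly on the quasi-compactness of $T_\beta$ and the holomorphy of $z\mapsto T_z$ established in the two preceding lemmas. You merely unpack the standard Riesz-projection mechanism that underlies that theorem rather than citing it as a black box.
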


\section{Implicit Markov Renewal theory on trees} \label{sect:implicit renewal}

We will show the following proposition, which gives proof  of existence of the limit in Theorems \ref{main
theorem} and \ref{main theorem2}:

\begin{proposition}\label{prop:implicit renewal}Under the assumptions of Theorem
\ref{main theorem}, { resp. Theorem \ref{main theorem2}},
\begin{equation}\label{theorem:main:assertion1}
\lim_{t \to \infty} t^{\beta}\,\P{xR > t} = \frac{e_\beta(x)}{2 \beta
l_\beta}
\int_S
\Erw{\abs{\sum_{i=1}^N y C_i R_i +yQ}^\beta - \sum_{i=1}^N \abs{y C_i
R_i}^\beta} \nu_\beta(dy),
\end{equation}
for all $x \in S$, where
$$ l_\beta=\int_S \Erw{e_\beta(\esl{yC}) \abs{yC}^\beta \log \abs{yC} } \nu_\beta(dy)
>0 .$$
\end{proposition}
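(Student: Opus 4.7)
The plan is to apply Alsmeyer's Markov renewal theorem \cite{Alsmeyer1997} to the Markov random walk $(X_n, V_n)$ under ${}^\beta\Prob$ built in Section \ref{sect:Ts}, following the implicit renewal strategy of Goldie and its branching-equation extension by Jelenkovi\'c and Olvera-Cravioto \cite{JO2010a,JO2010b}.

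Set $H(x,s) := \P{xR > e^s}$. The fixed point equation $R \eqdist \sum_i C_i R_i + Q$ together with the fact that $C_1, \dots, C_N$ are identically distributed gives the decomposition
$$H(x,s) = \sum_{i=1}^N \P{xC_i R_i > e^s} + g(x,s) = N\,\E_x[H(X_1, s - U_1)] + g(x,s),$$
with $X_1 = \esl{xC}$, $U_1 = \log \abs{xC}$ and
$$g(x,s) := \P{\sum_{i=1}^N xC_i R_i + xQ > e^s} - \sum_{i=1}^N \P{xC_i R_i > e^s}.$$
Setting $f(x,s) := e^{\beta s} H(x,s)/e_\beta(x)$ and $\tilde g(x,s) := e^{\beta s} g(x,s)/e_\beta(x)$ and invoking the change of measure \eqref{Def:transformiertesMass}, the prefactor $N$ is absorbed (since $N\kappa(\beta) = 1$), producing the Markov renewal equation
$$f(x,s) = {}^\beta\E_x[f(X_1, s - V_1)] + \tilde g(x,s).$$
Iterating, and using positivity of the drift of $(V_n)$ under ${}^\beta\Prob$ together with $H \le 1$ to show ${}^\beta\E_x[f(X_n, s - V_n)] \to 0$, I obtain $f(x,s) = \sum_{n \ge 0} {}^\beta\E_x[\tilde g(X_n, s - V_n)]$.

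The hypotheses of Alsmeyer's theorem are then verified: Proposition \ref{propertiesMRW} and Lemma \ref{lem:bounded increments} provide the Doeblin property of the driving chain $(X_n)$ and bounded increments of $V_n$ at regeneration epochs; nonarithmeticity of the Markov random walk follows from either \eqref{density} (combined with \eqref{irred}) or directly from \eqref{minorization}; and the drift ${}^\beta\E_\pi[V_1]$ is strictly positive and matches the quantity $l_\beta$ appearing in the statement via Corollary \ref{cor:perturbation}, which identifies it with the derivative $\kappa'(\beta)$, itself positive by convexity of $m$ and $\alpha < \beta < s_\infty$. The delicate hypothesis is direct Riemann integrability of $\tilde g$: for large positive $s$, the algebraic cancellation inherent in $g$ is controlled by an inequality of the form
$$\bigl|\,{\textstyle\sum} a_i + b\bigr|^\beta - {\textstyle\sum}\abs{a_i}^\beta \le C_\beta\Bigl(\abs{b}^\beta + ({\textstyle\max_i}\abs{a_i})^{\beta-1}\abs{b} + \text{cross terms}\Bigr),$$
together with $\E\abs{Q}^{s_\infty} < \infty$ and $\beta < s_\infty$, so that the required integrals are finite \emph{even though $\E\abs{R}^\beta$ may be infinite}; for large negative $s$ the factor $e^{\beta s}$ gives exponential decay against the bounded $g$. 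Alsmeyer's theorem then yields, after absorbing $e_\beta$ via $\pi = e_\beta\nu_\beta$,
$$\lim_{s\to\infty} f(x,s) = \frac{1}{l_\beta}\int_\R \int_S e^{\beta u} g(y,u)\,\nu_\beta(dy)\,du.$$

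Finally, Fubini combined with the scalar identity $\int_0^\infty t^{\beta-1}\P{Y > t}\,dt = \beta^{-1}\E Y_+^\beta$ converts the inner $u$-integral into $\beta^{-1}\bigl[\E(yZ)_+^\beta - \sum_i \E(yC_i R_i)_+^\beta\bigr]$, where $Z = \sum_i C_i R_i + Q$. The symmetry $\nu_\beta(-\cdot) = \nu_\beta$, which follows from $e_\beta(-x) = e_\beta(x)$ and uniqueness of $\nu_\beta$ applied to the reflected eigenmeasure, allows symmetrization over $y \leftrightarrow -y$: since $a_+^\beta + (-a)_+^\beta = \abs{a}^\beta$, the sum of the $+y$ and $-y$ contributions replaces $2(\cdot)_+^\beta$ by $\abs{\cdot}^\beta$. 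Multiplying back by $e_\beta(x)$ produces the stated formula. The main obstacle of the argument is the direct Riemann integrability of $\tilde g$, where the algebraic cancellation in $g$ must be exploited because $\E\abs{R}^\beta$ may be infinite; the positive-$s$ tail estimates are the most delicate part and rely essentially on $\E\abs{Q}^{s_\infty} < \infty$ with $\beta < s_\infty$.
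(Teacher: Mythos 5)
Your argument follows the paper's strategy quite closely: expand the fixed point equation via the weighted branching tree into a Markov renewal equation for $f(x,s) = e^{\beta s}\P{xR > e^s}/e_\beta(x)$ (your one-step identity, iterated, reproduces the paper's telescoping sum), show the level-$n$ remainder vanishes using $m(\beta-\delta)<1$, apply Alsmeyer's MRT under the $\beta$-tilted measure with $\pi = e_\beta\nu_\beta$, and symmetrize over $y \leftrightarrow -y$ using $e_\beta(-x)=e_\beta(x)$ (and symmetry of $\nu_\beta$) to pass from one-sided to two-sided tails. The computation of the drift $l_\beta = {}^\beta\E_\pi V_1$ and its positivity also coincide.

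There is, however, a real gap at the direct Riemann integrability step. You verify (in sketch) that $\sup_{y\in S}\int_\R \abs{\tilde g(y,s)}\,ds < \infty$, and then invoke Alsmeyer's theorem for $f$ directly. But $\pi$-direct Riemann integrability is strictly stronger than an $L^1$ bound: it requires integrability of $\sum_{n\in\Z}\sup_{t\in[n\delta,(n+1)\delta)}\abs{g(y,t)}$. Your estimates (exponential decay from $e^{\beta s}$ at $s\to-\infty$, cancellation plus moment bounds at $s\to+\infty$) control only the $L^1$ norm of $g(y,\cdot)$, not these local suprema, and $g$ itself has no a priori regularity that lets you upgrade. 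The paper circumvents this by replacing $f,g$ with their exponentially smoothed versions $\hat f,\hat g$ (with $\hat g(y,t)=\int_{-\infty}^t e^{-(t-s)}g(y,s)ds$), so that Goldie's Lemma~9.2 applies: for the smoothed kernel, the uniform $L^1$ bound $\sup_y\int_\R\abs{g(y,s)}ds<\infty$ \emph{does} imply direct Riemann integrability. One then unsmooths at the end using Goldie's Lemma~9.3. Without this smoothing device (or some substitute exploiting the monotone structure of the tails), the step ``Alsmeyer's theorem then yields $\lim_s f(x,s)$'' is not justified. Secondarily, the cancellation inequality you write down with ``cross terms'' is only a heuristic placeholder for the quantitative content of Propositions~\ref{prop:Goldie lemma} and~\ref{extension of moments}, which are themselves nontrivial, but that is a matter of detail rather than a wrong idea; the smoothing is the genuinely missing step.
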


But first, we have to introduce the Markov Renewal Theorem, which we will use in the subsequent proof.

\subsection{Markov Renewal Theory}
A measurable function $g : S \times \R \to \R$ is called \emph{$\pi$-directly Riemann integrable} if
\begin{align}
&g(x, \cdot) \text{ is $\lambda$-a.e.\ continuous for $\pi$-almost all } x \in S  \label{dRi1}\\
\text{and}\quad&\int_S \sum_{n \in \Z} \sup_{t \in [n \delta, (n+1) \delta)} \abs{g(x,t)} \pi(dx) < \infty \ \text{ for some } \delta > 0, \label{dRi2}
\end{align}
where $\lambda$ denotes the Lebesgue measure on $\R$.
The following Markov renewal theorem (MRT) is the main result of
\cite{Alsmeyer1997}:

\begin{theorem}\label{MRT}
Let $(X_n, V_n)_{n \geq 0}$ be a nonarithmetic MRW with positive Harris
recurrent driving chain $(X_n)_{n \geq 0}$ with stationary distribution $\pi$. Let $l := \E_\pi {V_1} >0$. If $g : S \times \R \to \R$ is a $\pi$-directly Riemann integrable function, then for $\pi$-almost all $x \in S$,
\begin{equation}\label{MRT:limit 1}
g *\Bbb{U}_x(t) := \E_x \left( \sum_{n \geq 0} g(X_n, t - V_n) \right) \ _{\overrightarrow{t\to\infty}}\ \frac{1}{l} \int_S \int_{\R} g(u,v)\,dv\,\pi(du).
\end{equation}
\end{theorem}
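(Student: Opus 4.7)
The plan is to reduce this Markov renewal theorem to the classical Blackwell/key renewal theorem for iid random walks via the Athreya--Ney regeneration technique, which is the standard route for such MRTs (going back to Kesten, and used by Alsmeyer in the paper being cited). Since the driving chain $(X_n)$ is positive Harris recurrent, after enlargement of the probability space one can construct regeneration epochs $0 \le \sigma_0 < \sigma_1 < \sigma_2 < \cdots$ that split the trajectory into cycles on which the blocks $\bigl((X_{\sigma_k+j}, V_{\sigma_k+j}-V_{\sigma_k})_{0 \le j < \tau_{k+1}}\bigr)$, with $\tau_{k+1} := \sigma_{k+1}-\sigma_k$, are iid for $k \ge 0$ and $\E \tau_1 < \infty$ by positive recurrence. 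In particular $W_k := V_{\sigma_k} - V_{\sigma_{k-1}}$ forms an iid sequence.

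The next step is to identify the mean and lattice type of $W_1$. By Wald's identity combined with the standard regenerative formula for the stationary measure,
$$ \pi(f) = \frac{1}{\E \tau_1}\, \E \sum_{n=0}^{\tau_1-1} f(X_n), $$
one computes $\E W_1 = l \cdot \E \tau_1$. Nonarithmeticity of $\mathcal{L}(W_1)$ is inherited from the nonarithmeticity hypothesis on the MRW: one must show that the closed subgroup of $\R$ generated by $\supp \mathcal{L}(W_1)$ coincides with that generated by the one-step increment distribution (which is $\R$). This is a standard, though not entirely trivial, property of regenerative decompositions.

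I would then decompose the Markov renewal measure along cycles. Setting
$$ G(t) := \E \sum_{n=0}^{\tau_1-1} g(X_n, t - V_n) $$
under the regenerative law started at a regeneration epoch, the cycle decomposition yields
$$ \E_x \sum_{n \ge 0} g(X_n, t - V_n) = h_x(t) + (G \ast U_W)(t), $$
where $h_x(t)$ is the contribution of the pre-$\sigma_1$ initial block, which vanishes as $t \to \infty$ by direct Riemann integrability and $\E_x \sigma_1 < \infty$, and $U_W$ is the renewal measure of the iid sequence $(W_k)_{k \ge 1}$. Blackwell's/key renewal theorem then gives
$$ \lim_{t \to \infty} (G \ast U_W)(t) = \frac{1}{\E W_1} \int_\R G(t)\, dt, $$
provided $G$ is directly Riemann integrable on $\R$. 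A Fubini computation using the regenerative formula for $\pi$ identifies $\int_\R G(t)\, dt = \E \tau_1 \cdot \int_S \int_\R g(u,v)\, dv\, \pi(du)$, so the limiting constant becomes $\E \tau_1 / \E W_1 = 1/l$, as asserted.

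The main technical obstacle is verifying that $G$ inherits direct Riemann integrability on $\R$ from the $\pi$-direct Riemann integrability of $g$ (conditions \eqref{dRi1}--\eqref{dRi2}). The idea is to dominate $\sup_{t \in [n\delta,(n+1)\delta)} |G(t)|$ by the expectation of a sum of the corresponding supremum of $|g(X_j,\cdot)|$ over cycle positions $0 \le j < \tau_1$, then sum over $n$ and apply Fubini plus the regenerative representation of $\pi$ to extract an $\E \tau_1$ factor, reducing summability to the hypothesis $\int_S \sum_n \sup_{t \in I_n} |g(x,t)|\, \pi(dx) < \infty$. Almost-everywhere continuity of $G$ passes through the cycle decomposition for $\pi$-typical starting points, which is exactly the reason the conclusion is only asserted for $\pi$-a.e.\ $x$. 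With these ingredients in place the limit follows by standard dominated-convergence arguments applied to the convolution $G \ast U_W$.
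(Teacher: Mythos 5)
You should note first that the paper does not prove Theorem \ref{MRT} at all: it is imported verbatim as the main result of \cite{Alsmeyer1997} (with the extension in Remark \ref{crucial extension MRT} taken from \cite{AM2010}), so there is no internal proof to compare with. Your proposal is therefore an attempt to reprove the cited theorem by the classical regeneration-plus-Blackwell route. That is the natural strategy, but as written it has two genuine gaps, and they are exactly the points that make the Markov renewal theorem a theorem rather than an exercise.

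First, the iid-cycle claim is unjustified under the stated hypotheses. Positive Harris recurrence gives, via Athreya--Ney splitting \cite{Athreya1978}, regeneration of the \emph{driving chain} only: $X_{\sigma_k}\sim\phi$ independently of the past, so the $X$-blocks are iid. But the increments of an MRW depend conditionally on the pair $(X_{n-1},X_n)$, so $W_k=V_{\sigma_k}-V_{\sigma_{k-1}}$ involves $U_{\sigma_k}$, which depends on $X_{\sigma_k}$ — the starting point of the \emph{next} block — and hence $W_k$ and $W_{k+1}$ are dependent: the bivariate blocks are in general only one-dependent, not iid. Blackwell's/key renewal theorem for iid walks therefore does not apply to your $U_W$ as stated; one needs either the stronger joint minorization of the kernel on $S\times\R$ (this is precisely the distinction the paper draws between \eqref{MC} and \eqref{minorization} in the remark following Lemma \ref{lem:bounded increments}, citing \cite{Athreya1978b}) or substantial extra work to handle one-dependent cycles. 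Second, the lattice-type transfer is mishandled: for MRWs, nonarithmeticity is not the statement that the support of the one-step increment distribution generates $\R$; in the sense of Shurenkov \cite{Shurenkov1984} one must exclude representations of the form $U_1\in\xi(X_0)-\xi(X_1)+\gamma\Z$ a.s.\ for a measurable shift $\xi$, and showing that the embedded walk $(V_{\sigma_k})_k$ inherits nonarithmeticity from this condition is the delicate core of all known proofs (Kesten, Athreya--McDonald--Ney, Alsmeyer), not a ``standard property of regenerative decompositions''. The remaining ingredients of your sketch (the identity $\E W_1=l\,\E\tau_1$ via the regenerative representation of $\pi$, the direct Riemann integrability of the cycle function $G$, and the vanishing of the delay term for $\pi$-a.e.\ $x$, which also requires $\E_x\sigma_1<\infty$) are plausible but all presuppose the iid structure you have not actually secured, so the proof as proposed does not go through without repairing these two points.
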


\begin{remark}\label{crucial extension MRT}
The following extension of the above result is given in \cite[Section
7]{AM2010}: If $(X_n)_{n \ge 0}$ is a Doeblin chain with bounded increments at
regeneration epochs, then the assertion \eqref{MRT:limit 1} holds for all $x \in S$.
In our case, this is assured by Proposition \ref{propertiesMRW} { resp. the subsequent remark}.
\end{remark}


{ For the precise formulation of nonarithmeticity in the context of Markov Random Walks, see \cite{Shurenkov1984}.  We only note here that if \eqref{irred}, \eqref{density} hold,  an adaptation of \cite[Lemma 5.8]{AM2010} shows that $(X_n, V_n)_{n \ge 0}$ is
nonarithmetic under $^\beta\Prob_x$; and that it is a direct consequence of \eqref{minorization}. }
Using Proposition \eqref{propertiesMRW} and the definition of $^\beta\E$ in \eqref{Def:transformiertesMass}, we compute \begin{align*}^\beta\E_\pi V_1 = & \int_S e_\beta(y)^{-1}
  \Erw{e_\beta(\esl{yC}) \abs{yC}^\beta \log \abs{yC} } \pi(dy)\\
  = & \int_S \Erw{e_\beta(\esl{yC}) \abs{yC}^\beta \log \abs{yC} } \nu_\beta(dy)
=  l_\beta.
\end{align*}
Using $m'(\beta)>0$ and \cite[Lemma 5.9]{AM2010} yield that $l_\beta >0$.

\subsection{Implicit Markov Renewal Theory on Trees}

Now we give the proof of Proposition \ref{prop:implicit renewal}.
Let us define
$$
f(x,t) = \frac{e^{\b t}}{e_\b(x)} \P{xR>e^t}.
$$ We would like to write the function $f$ as a potential of some function $g$ and then to apply Theorem \ref{MRT}. However the function $f$ is not sufficiently smooth to satisfy all the hypotheses of the renewal theorem. Therefore we consider its smoothed version, i.e. for any function
$g : S \times \R \to \R$ we define its exponential smoothing 
 $$ \hat{g}(y,t) = \int_{-\infty}^t e^{- (t-s)} g(y,s) ds .$$
By \cite[Lemma 9.3]{Goldie1991}, 
 if one of $f(x,t)$ and $\hat{f}(x,t)$, converges for $t \to \infty$, then both of them
converge to the same limit. So it is sufficient to consider
the exponential smoothed version of $f$.

\begin{lem}
The function $\hat f$ satisfies
$
\hat f(x,t) = \hat g * \mathbb{U}_x(t),
$ where {
$$
g(x,t) = \frac{e^{\b t}}{e_\b(x)} \Big[ \P{xR > e^t} - N\P{xCR > e^t}  \Big].
$$}
\end{lem}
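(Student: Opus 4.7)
The plan is to first derive a one-step renewal identity
$$f(x,t) = g(x,t) + {}^\beta\E_x\bigl[f(X_1, t - V_1)\bigr],$$
then iterate it, apply the exponential smoothing, and pass to the limit.

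By the very definition of $g$,
$$f(x,t) - g(x,t) = \frac{N e^{\beta t}}{e_\beta(x)}\,\P{xCR > e^t},$$
with $R$ independent of $C$ on the right-hand side. Under our standing assumptions $xC \neq 0$ almost surely (either because $C \in GL(d,\R)$ or by \eqref{notvanish}), so $\{xCR > e^t\} = \{\esl{xC}R > e^{t - \log|xC|}\}$. Conditioning on $C$ and expressing the resulting probability back in terms of $f$ at the rescaled argument gives
$$\frac{N e^{\beta t}}{e_\beta(x)}\,\P{xCR > e^t} = \frac{N}{e_\beta(x)}\,\Erw{e_\beta(\esl{xC})\,|xC|^\beta\,f(\esl{xC}, t - \log|xC|)},$$
which by the definition of the change of measure \eqref{Def:transformiertesMass} is exactly ${}^\beta\E_x\bigl[f(X_1, t - V_1)\bigr]$.

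Iterating using the Markov property of $(X_n, V_n)$ under $^\beta\Prob$ yields
$$f(x,t) = \sum_{k=0}^{n-1}{}^\beta\E_x\bigl[g(X_k, t-V_k)\bigr] + {}^\beta\E_x\bigl[f(X_n, t-V_n)\bigr].$$
Since the exponential smoothing $h \mapsto \hat h$ is a convolution in the $t$-variable only, it commutes with the operator $h \mapsto {}^\beta\E_\bullet\bigl[h(X_1, \cdot - V_1)\bigr]$, so the identical equation holds with $\hat f$ and $\hat g$ in place of $f$ and $g$.

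The main obstacle is to show that the remainder ${}^\beta\E_x[\hat f(X_n, t-V_n)]$ tends to $0$ as $n \to \infty$. The trivial bound $\P{yR > e^u} \le 1$ yields $\hat f(y,s) \le e^{\beta s}/((\beta+1)\,\inf_S e_\beta)$ for $s \le 0$, while Markov's inequality with an exponent $s' \in (0,\beta)$ for which $\Erw{|R|^{s'}} < \infty$ (guaranteed by Proposition~\ref{EU:inhom}) gives a controlled bound on $\hat f(y,s)$ for $s > 0$. By the law of large numbers for Markov random walks (Proposition~\ref{propertiesMRW}), $V_n/n \to l_\beta > 0$ almost surely under $^\beta\Prob_x$, so $t - V_n \to -\infty$ and hence $\hat f(X_n, t-V_n) \to 0$ pointwise. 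A standard large-deviation bound for $\{V_n < n l_\beta/2\}$ under $^\beta\Prob_x$ supplies the domination needed to move the limit inside the expectation, and letting $n \to \infty$ yields
$$\hat f(x,t) = \sum_{k=0}^\infty{}^\beta\E_x\bigl[\hat g(X_k, t-V_k)\bigr] = \hat g \ast \mathbb{U}_x(t),$$
which is the claim.
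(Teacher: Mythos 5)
Your proposal follows essentially the same route as the paper. The paper expands $\P{xR > e^t}$ by telescoping over the weighted branching tree and then identifies the resulting sum under the tilted measure $^\beta\Prob_x$; your one-step identity $f(x,t) = g(x,t) + {}^\beta\E_x[f(X_1, t-V_1)]$ iterated via the Markov property is exactly the same decomposition written in renewal form, and your computation correctly recovers the change of measure \eqref{Def:transformiertesMass}. The commutation of exponential smoothing with the kernel is also sound. Two places, however, are imprecise or incomplete.

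First, the remainder argument. The large-deviation plus domination scheme you sketch does not quite close as stated: on the bad event $\{V_n < n l_\beta/2\}$ the quantity $\hat f(X_n, t - V_n)$ is not merely bounded, it can grow like $e^{-(\beta - s')V_n}$, so bounding the probability of the event alone is not enough --- you would need a moment bound, not a tail bound. The clean argument (which is what the paper does, multiplying by a Markov inequality factor before smoothing) is to use the global bound $\hat f(y,s) \le C e^{(\beta-s')s}$ for some $s' \in (\alpha,\beta)$ and compute directly
\[
{}^\beta\E_x[\hat f(X_n, t-V_n)] \le C e^{(\beta-s')t}\,\frac{N^n}{e_\beta(x)}\,\E_x\big[e_\beta(X_n) e^{s'V_n}\big] \le C' e^{(\beta-s')t}\, \frac{N^n \E\norm{\Pi_n}^{s'}}{e_\beta(x)},
\]
which tends to $0$ because $m(s') < 1$. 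This replaces the pointwise-plus-domination detour with a single line and needs no large deviations.

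Second, and more substantively, your final step ``letting $n\to\infty$ yields $\hat f = \sum_{k\ge 0}{}^\beta\E_x[\hat g(X_k, t-V_k)] = \hat g * \mathbb{U}_x(t)$'' glosses over the passage from a conditionally convergent series of integrals to the integral of $\hat g$ against the potential measure $\mathbb{U}_x = \sum_k {}^\beta\P_x(X_k\in\cdot, V_k\in\cdot)$. Since $g$ changes sign, this interchange requires absolute integrability, i.e.\ $\sup_{y\in S}\int_\R |g(y,s)|\,ds < \infty$, equivalently direct Riemann integrability of $\hat g$. The paper devotes the final part of the proof to this, relying on Propositions \ref{prop:Goldie lemma} and \ref{extension of moments}. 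Without this step, the identity $\hat f = \hat g * \mathbb{U}_x$ is not established in the form needed to apply Theorem \ref{MRT} afterwards; you should include it.
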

\begin{proof}
First
we expand $\P{xR > e^t}$ into a telescoping sum, using the WBP:
\begin{eqnarray*}
\P{xR>e^t} &= & \sum_{k=0}^{n-1} \left[ \sum_{\abs{v}=k} \left( \P{xL(v)R(v) > e^t}
- \sum_{i=1}^N \P{xL(vi)R(vi) > e^t} \right) \right]  \\
& & + \sum_{\abs{v}=n} \P{xL(v)R(v) >
e^t} \\
&= & \sum_{k=0}^{n-1} \left[ \sum_{\abs{v}=k} \left(\P{x\Pi_k R > e^t} -
\sum_{i=1}^N
\P{x\Pi_k CR > e^t} \right)  \right] + N^n \P{x\Pi_n R > e^t} \\
&= &
\sum_{k=0}^{n-1} N^k \left[  \P{X_k e^{V_k} R > e^t} - N \P{X_ke^{V_k} CR > e^t}
\right] + N^n \P{x\Pi_n R > e^t} \\
&= & \sum_{k=0}^{n-1} N^k \int
\P{y R > e^{t-v}} - N \P{y CR > e^{t-v}} \P[x]{X_k \in dy, V_k \in dv} \\ && +
N^n \P{x\Pi_n R > e^t}.
\end{eqnarray*}
Multiplying both sides by $e^{\beta t}/e_\beta(x)$ and using
\eqref{Def:transformiertesMass}, we obtain
\begin{eqnarray*}
f(x,t)&=  & \sum_{k=0}^{n-1}  \int \frac{e^{\beta (t-v)}}{e_\beta(y)} \left[ \P{y R >
  e^{t-v}} - N \P{y CR > e^{t-v}} \right] N^k { \frac{e_\beta(y)e^{\beta v}}{e_\beta(x)} } \P[x]{X_k
  \in dy, V_k \in dv} \\ && + \frac{e^{\beta t}}{e_\beta(x)}N^n \P{x\Pi_n R > e^t} \\ &=   & \sum_{k=0}^{n-1}  \int \frac{e^{\beta
  (t-v)}}{e_\beta(y)} \left[ \P{y R > e^{t-v}} - N \P{y CR > e^{t-v}} \right]  \
  ^\beta\P[x]{X_k \in dy, V_k \in dv} \\ && +  N^n \frac{e^{\beta
  t}}{e_\beta(x)} \P{x\Pi_n R > e^t} \\ &= &  \sum_{k=0}^{n-1}  \int
  g(y,t-v) \ ^\beta\P[x]{X_k \in dy, V_k \in dv} + N^n \frac{e^{\beta
  t}}{e_\beta(x)} \P{x\Pi_n R > e^t}.
\end{eqnarray*}

Applying to { both sides the exponential smoothing }we have
\begin{equation}
\label{implicit:renewal:1}
\hat f(x,t) =
  \sum_{k=0}^{n-1}  \int \hat{g}(y,t-v)  \  ^\beta\P[x]{X_k \in dy, V_k \in
 dv} +
 \int_{-\infty}^t e^{-(t-s)} \frac{e^{\beta s}}{e_\beta(x)} N^n \P{x\Pi_n R >
 e^s} ds.
\end{equation}
Now we want to pass with $n$ to infinity and prove that the second term vanishes.
To this purpose choose $ \delta >0$ and $n_0 \in \N$ such that
$N\left( \E \norm{\Pi_n}^{\beta -\delta} \right)^\frac{1}n < 1-\epsilon$ for all
$n \ge n_0$ and some $\epsilon >0$. This is possible since $m(\beta - \delta) <
1$. Then, using the Markov inequality,
\begin{multline*}
 \int_{-\infty}^t e^{-(t-s)} \frac{e^{\beta s}}{e_\beta(x)} N^n \P{x\Pi_n R > e^s} ds
\le  \int_{-\infty}^t \frac{e^{- t + (\beta + 1) s}}{e_\beta(x)} N^n
\P{\abs{\Pi_n R} > e^s} ds \\
\le    \int_{-\infty}^t \frac{e^{- t + (\beta +
1) s}}{e_\beta(x)} N^n \frac{\Erw{ \abs{\Pi_n R}^{\beta -\delta}}}{e^{s(\beta -
\delta)}} ds \le    e^{- t} \int_{-\infty}^t \frac{e^{ (\delta + 1) s}}{e_\beta(x)} (1-\epsilon)^n \Erw{ \abs{R}^{\beta -\delta}} ds  \\
\le  e^{\delta t}  \cdot (1- \epsilon)^n,
\end{multline*}
which tends to zero for each fixed $t$, as $n \to \infty$.

The first term in \eqref{implicit:renewal:1} for $n \to \infty$  is \emph{almost} a renewal function, we want to switch summation and integration, to read
\begin{equation}
\hat{g}*\mathbb{U}_x(t) := \int \hat{g}(y, t- v) \sum_{k=0}^\infty
\ ^\beta\P[x]{X_k \in dy, V_k \in dv} .
\end{equation}
Therefore it remains to prove  that $\hat g$ is a directly Riemann integrable function.

In view of \cite{Goldie1991} Lemma 9.2,  it is sufficient to show $
\sup_{y\in S} \int_\R |g(y,s)|\lambda(ds)<\8
$,
i.e.
(after a change of variables)
\begin{equation} \label{dRi:condition}
\sup_{y \in S} \int_0^\infty t^{\beta -1} \abs{\P{yR> t} - N\P{yTR> t}} dt < \infty.
\end{equation}
Finiteness of the expression above follows from Propositions \ref{prop:Goldie lemma} and \ref{extension of moments}. Since both Propositions are very technical we postpone their statements to Section \ref{sect:moments} and their proofs to Appendix \ref{app:proofs}.
 \end{proof}

\begin{proof}[Proof of Proposition \ref{prop:implicit renewal}]
The Markov Renewal Theorem \ref{MRT} yields
\begin{align*}
&  \lim_{t \to \infty} \frac{e^{- t}}{e_\beta(x)} \int_{-\infty}^t e^{(\beta+1)s}
\P{xR > e^s} ds
=  \frac{1}{l} \int_S \int_\R \hat{g}(u,v) \lambda(dv)
\pi(du) \\
= &  \frac{1}{l} \int_S \int_\R g(u,v) \lambda(dv) \pi(du)
=  \frac{1}l \int_S \int_\R \frac{e^{\beta
  v}}{e_\beta(u)} \left[ \P{u R > e^{v}} - N \P{u CR > e^{v}} \right]
  \lambda(dv) \pi(du) \\
= &  \frac{1}l \int_S \int_0^\infty w^{\beta-1} \left[ \P{u R > w} - N \P{u CR > w} \right]
  dw \frac{1}{e_\beta(u)}\pi(du) \\
  = &  \frac{1}{2l} \int_S \int_0^\infty w^{\beta-1} \left[ \P{\abs{u R} > w} - N \P{\abs{u CR} > w} \right]
  dw \frac{1}{e_\beta(u)}\pi(du) \\
= & \frac{1}{2\beta l}
\int_S
\Erw{{\left[\sum_{i=1}^N u C_i R_i +uQ\right]^+}^\beta - \sum_{i=1}^N {[u C_i
R_i]^+}^\beta} \nu_\beta(du),
\end{align*}
for all $x \in S$. The penultimate line is justified by the symmetry of $e_\beta$ (Proposition \ref{prop:properties es nus}), the last identity will again be justified by   Propositions \ref{prop:Goldie lemma} and \ref{extension of moments}.

With this identity, in view of \cite[Lemma 9.3]{Goldie1991} we may easily unsmooth  $\hat{f}$, and one can finally infer that
$$ \lim_{t \to \infty} e^{\beta t} \P{xR > e^t} = \frac{e_\beta(x)}{2\beta l}
\int_S
\Erw{\abs{\sum_{i=1}^N u C_i R_i +uQ}^\beta - \sum_{i=1}^N \abs{u C_i
R_i}^\beta} \nu_\beta(du).$$

\end{proof}

The remaining part of the proof of Theorem \ref{main theorem} and \ref{main theorem2}, i.e.
the proof of the positivity of the limiting constant $K$ is postponed  to the Section \ref{sect:positivity}.

\section{Similarities}\label{sect:similarities}

In the last two sections we considered general matrices and proved existence of the limits in Theorems \ref{main theorem} and \ref{main theorem2}. It still remains to prove nondegeneracy of those limits and the arguments will be given in Section \ref{sect:positivity}.

 Now we consider similarities and our aim is to prove  existence of the limit in Theorem \ref{thm: similarities}.
 We  assume now that the $C_i$ take their values in the similarities group and that the assumptions of Theorem \ref{thm: similarities} are satisfied. The idea of the proof resembles the previous case of general matrices, i.e. we reduce the problem to the renewal equation but this time we apply an extended version of the renewal theorem for random walks on $G =\R^+\times O(d)$ (Theorem A.1 \cite{BDGHU2009}).

We denote by $\ov \mu$ the law of $C$. Since $N\int_G\|g\|^\b\ov \mu(dg)=1$,  the measure $\mu_\b (dg) = N\|g\|^\b\ov\mu(dg)$ is a probability measure
and moreover $m_\b = \int_G \log\|g\|\mu_\b(dg) = N\E[\|C\|^\b\log\|C\|]>0$. By $\ov\mu^{*k}$ we denote the $k$th convolution power of $\ov\mu$, i.e. the law of $\Pi_k$. In the same way we introduce $\mu_\b^{*k}$. Let $U_\b$ denote the potential of $\mu_\b$, i.e.
$U_\b =\sum_{k=0}^{\8}\mu_\b^{*k}$. Then $U_\b$ is a Radon measure on $G$ and {  we have the following result:}
\begin{lem}
Let $f\in C_C(\R^d\setminus\{0\})$, then for any $a\in G$
$$
\|a\|^{-\b}\E\big[ f(aR)\big] = (\delta_a*U_\b)(\psi_f) = \sum_{k=0}^\8 \int_G \psi_f(ag)\mu_\b^{*k}(dg),
$$ where
$$
\psi_f(g) = \|g\|^{-\b} \E\bigg[ f(gR) - \sum_{i=1}^N f(g C_i R_i)
\bigg].
$$

\end{lem}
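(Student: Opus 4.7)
The identity should follow from a telescoping argument across the levels of the weighted branching process, exploiting the multiplicativity of $\norm{\cdot}$ on $G$. I would define
\[
a_k := N^k\,\E\bigl[f(a\Pi_k R)\bigr] \qquad (k \ge 0),
\]
where $\Pi_k$ is independent of $R$ (a solution of \eqref{SFPE}), so $a_0 = \E[f(aR)]$. The plan is to identify $a_k - a_{k+1}$ with the $k$-th term on the right-hand side and show $a_n \to 0$, so that $a_0 = \sum_{k=0}^\infty (a_k-a_{k+1})$.

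For the key computation, I would write $\Pi_{k+1}=\Pi_k\,C^{(k+1)}$ with $C^{(k+1)}$ independent of $\Pi_k$. Using that $C \eqdist C_I$ for $I$ uniform on $\{1,\dots,N\}$ and that $R$ is independent of $(C_1,\dots,C_N)$, one gets
\[
a_{k+1} = N^{k+1}\E\bigl[f(a\Pi_k C^{(k+1)} R)\bigr] = N^{k}\sum_{i=1}^{N}\E\bigl[f(a\Pi_k C_i R_i)\bigr],
\]
where the $R_i$ are iid copies of $R$ independent of $(\Pi_k,C_1,\dots,C_N)$. Hence
\[
a_k - a_{k+1} = N^k\,\E\Bigl[f(a\Pi_k R) - \sum_{i=1}^N f(a\Pi_k C_i R_i)\Bigr].
\]
Conditioning on $\Pi_k = g$ and using the similarity property $\norm{ag} = \norm{a}\norm{g}$ together with the definition of $\psi_f$, this becomes $\norm{a}^\beta N^k\,\E\bigl[\norm{\Pi_k}^\beta \psi_f(a\Pi_k)\bigr]$. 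Since $\mu_\beta(dg) = N\norm{g}^\beta \bar\mu(dg)$ and $\norm{\Pi_k}^\beta = \prod_{j=1}^k \norm{C^{(j)}}^\beta$ on $G$, a direct induction gives
\[
\int_G F(g)\,\mu_\beta^{*k}(dg) = N^k\,\E\bigl[\norm{\Pi_k}^\beta F(\Pi_k)\bigr]
\]
for any bounded measurable $F$, so finally $a_k-a_{k+1} = \norm{a}^\beta \int_G \psi_f(ag)\,\mu_\beta^{*k}(dg)$.

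It remains to verify $a_n \to 0$. Pick $c>0$ such that $\supp f \subset \{|y|\ge c\}$. Since $|a\Pi_n R| = \norm{a}\norm{\Pi_n}|R|$, boundedness of $f$ gives
\[
|a_n| \;\le\; \norm{f}_\infty N^n\,\Prob\!\bigl[\norm{\Pi_n}|R|\ge c/\norm{a}\bigr].
\]
Choosing $\gamma \in (\alpha,\beta)$, one has $m(\gamma)<1$, and by Propositions \ref{EU:hom}/\ref{EU:inhom} also $\E|R|^\gamma < \infty$. Markov's inequality and independence of $\Pi_n$ and $R$ then yield
\[
|a_n| \;\le\; \norm{f}_\infty (\norm{a}/c)^\gamma\, N^n \E\norm{\Pi_n}^\gamma\, \E|R|^\gamma \;=\; \norm{f}_\infty (\norm{a}/c)^\gamma\, m(\gamma)^n\, \E|R|^\gamma \;\longrightarrow\; 0,
\]
using $N^n \E\norm{\Pi_n}^\gamma = m(\gamma)^n$ on the group of similarities. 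Telescoping gives $\E[f(aR)] = \sum_{k=0}^\infty (a_k - a_{k+1}) = \norm{a}^\beta (\delta_a * U_\beta)(\psi_f)$, as required.

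The main obstacle is the step replacing $N \E[f(bCR)]$ by $\sum_i \E[f(bC_iR_i)]$: this is where the standing assumption that $C_1,\dots,C_N$ are identically distributed (to $C$) is used in an essential way. The tail estimate $a_n \to 0$ is straightforward once the moment $\E|R|^\gamma$ for some $\gamma<\beta$ is guaranteed by the existence results; the remaining steps are algebraic manipulations using the structure of the similarity group.
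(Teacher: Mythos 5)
Your proof is correct and is essentially the same argument as the paper's: both are a telescoping sum over generations of the weighted branching process (your $a_k = N^k\E[f(a\Pi_k R)]$ is exactly $\sum_{|v|=k}\E[f(aL(v)R(v))]$), a change of measure from $\bar\mu$ to $\mu_\beta$ using $\|ag\|=\|a\|\|g\|$, and a Markov-inequality tail estimate with some $\gamma$ satisfying $m(\gamma)<1$ and $\E|R|^\gamma<\infty$ to kill the boundary term.
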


\begin{proof}
We write
\begin{multline*}
\|a\|^{-\b}\E \big[f(aR)\big] \\= \|a\|^{-\b} \sum_{k=0}^{n-1}\E\bigg[ \sum_{|v|=k} \bigg(f(a L(v) R(v)) - \sum_{i=1}^N f(a L(vi)R(vi))\bigg)
\bigg] + \|a\|^{-\b} \sum_{|v|=n} \E \big[f(a L(v)R)\big]\\
= \sum_{k=0}^{n-1} \|a\|^{-\b}N^k \E\bigg[ f(a\Pi_kR) - \sum_{i=1}^N f(a\Pi_k C_i R) \bigg]
+ \|a\|^{-\b}\sum_{|v|=n} \E\big[ f(a L(v)R)\big] \\
=\int _G\sum_{k=0}^{n-1} \|ag\|^{-\b} \E\bigg[ f(agR) -
\sum_{i=1}^N f(a g C_i R) \bigg] \|g\|^\b N^k \overline\mu^{*k}(dg)
+ \|a\|^{-\b} \sum_{|v|=n} \E \big[f(a L(v)R) \big]\\
= \sum_{k=0}^{n-1}\int_G \psi_f(ag)\mu_\b^{*k}(dg)+ \|a\|^{-\b} \sum_{|v|=n} \E\big[ f(a L(v)R)\big] ,
\end{multline*}
Notice that for $s$ such that $m(s) <1$, assuming ${\rm supp}f\cap B_{\eta}(0)=\emptyset$, we have
$$
\bigg| \sum_{|v|=n} \E \big[ f(a L(v)R)\big] \bigg| \le c N^n {\mathbb P}
\big[ |a\Pi_n R|>\eta \big]
\le \frac{N^n \E[\|\Pi_n\|^s|R|^s]}{(\eta \norm{a}^{-1})^s}
\le \frac{m(s) ^n \E|R|^s}{(\eta \norm{a}^{-1})^s}.
$$
Hence
$$
\lim_{n\to\8} \bigg| \sum_{|v|=n} \E\big[ f(a L(v)R)\big] \bigg|\|a\|^{-\b} = 0
$$
and so
$$
\|a\|^{-\b}\E\big[ f(aR)\big] = \sum_{k=0}^\8 \int_G  \psi_f(ag)\mu_\b^{*k}(dg).
$$

\end{proof}
To apply the renewal theorem we have to check that the function $\psi_f$ is directly Riemann integrable. We need the two following lemmas, whose proofs will be presented in Appendix \ref{app:lemmas}:
\begin{lem}\label{ew6.1}
Let $f\in C_C^2(\R^d\setminus \{0\})$ and let $n\in\N$ be fixed. Then there is a constant $c = c(f)$ such that for every $0<\eps \le 1$
and every $x_1,\ldots,x_n,q\in \R^d$
$$
\bigg| f\big( x_1+\ldots + x_n + q \big) - \sum_{j=1}^n f(x_j) \bigg| \le c \bigg(
|q|^{\eps} + \sum_{i\not=j} |x_i|^{\eps}|x_j|^{\eps}
\bigg)
$$
\end{lem}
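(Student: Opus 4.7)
The plan is to exploit only three elementary properties of $f\in C_C^2(\R^d\setminus\{0\})$: boundedness with $M:=\|f\|_\infty$, global Lipschitz continuity with constant $L:=\|\nabla f\|_\infty$, and the existence of $\eta>0$ with $f\equiv 0$ on $\{|x|<\eta\}$ (we may assume $\eta\le 2$). Writing $S_k:=x_1+\cdots+x_k$, I decompose
\begin{equation*}
f(S_n+q)-\sum_{j=1}^n f(x_j) \;=\; \bigl[f(S_n+q)-f(S_n)\bigr] \;+\; \sum_{k=2}^n \bigl[f(S_{k-1}+x_k)-f(S_{k-1})-f(x_k)\bigr],
\end{equation*}
so that the proof reduces to (a) a one-step translation bound $|f(y+q)-f(y)|\le c|q|^\eps$ and (b) a two-term bound $|f(A+B)-f(A)-f(B)|\le c|A|^\eps|B|^\eps$, both with $c=c(f)$ independent of $\eps\in(0,1]$.

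Step (a) is immediate from interpolating the Lipschitz bound $L|q|$ with the trivial bound $2M$: if $|q|\le 1$ then $|q|^\eps\ge|q|$, while if $|q|>1$ then $|q|^\eps\ge 1$; taking $c_0=\max(L,2M)$ gives $|f(y+q)-f(y)|\le c_0|q|^\eps$, which handles the $|q|^\eps$ term. For step (b) I split on whether $|A|,|B|$ lie below or above $\eta/2$. If both are below $\eta/2$, then $|A|,|B|,|A+B|<\eta$ and all three values of $f$ vanish. If $|A|<\eta/2\le|B|$, then $f(A)=0$ and Lipschitz continuity gives $|f(A+B)-f(B)|\le L|A|$; since $|B|^\eps\ge\min(1,\eta/2)=:c_1$ and $|A|^\eps\ge|A|$ (as $|A|\le 1$), this is $\le (L/c_1)\,|A|^\eps|B|^\eps$. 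The symmetric case is identical, and if both $|A|,|B|\ge\eta/2$ then $|A|^\eps|B|^\eps\ge c_1^2$ and the trivial bound $3M$ suffices. This yields (b) with $c_1=\max(L/c_1,3M/c_1^2)$, uniformly in $\eps$.

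Applying (b) to each telescoped summand gives $|f(S_{k-1}+x_k)-f(S_{k-1})-f(x_k)|\le c_1|S_{k-1}|^\eps|x_k|^\eps$. Subadditivity of $t\mapsto t^\eps$ for $\eps\le 1$ gives $|S_{k-1}|^\eps\le\sum_{j<k}|x_j|^\eps$, and summing over $k$ produces $\sum_{i<j}|x_i|^\eps|x_j|^\eps\le \tfrac12\sum_{i\ne j}|x_i|^\eps|x_j|^\eps$; combining with step (a) finishes the proof. The main delicate point is ensuring uniformity in $\eps$ in step (b): the constants must come only from the geometry of $\operatorname{supp}(f)$ and not from $\eps$. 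This is why I partition at the threshold $\eta/2$ (rather than $\eta$): the annulus trick $|x|^\eps\ge\min(1,\eta/2)$ on the large side and $|x|^\eps\ge|x|$ on the small side both rely solely on $\eps\le 1$, and no other case requires estimating $|x|^\eps$ for $x$ of intermediate or large size. Note that only $C^1$ regularity of $f$ is actually used; the $C^2$ hypothesis is inherited from the class in which the lemma will be applied.
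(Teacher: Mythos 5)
Your proof is correct, and the key step (b) takes a genuinely different route from the paper. For the two-term estimate $|f(A+B)-f(A)-f(B)|\le c\,|A|^\eps|B|^\eps$, the paper uses the full $C^2$ regularity: it writes both $f(u+y)-f(u)$ and $f(y)=f(y)-f(0)$ as first-order Taylor integrals, differences the gradients via a second integral to obtain $|f(u+y)-f(u)-f(y)|\le C|u||y|$, and then interpolates this against the trivial bound $3\|f\|_\infty$ to produce $(|u||y|)^\eps$. You instead avoid second derivatives entirely: you exploit only the Lipschitz constant $\|\nabla f\|_\infty$ together with the geometric fact that $f\equiv 0$ on a ball $B_\eta(0)$, splitting into the three cases $|A|,|B|<\eta/2$ (everything vanishes), one small and one large (Lipschitz on the small increment), and both large (trivial bound). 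Each case yields the $\eps$-uniform constant by elementary monotonicity of $t\mapsto t^\eps$ on the two sides of $1$. What you gain is a weaker hypothesis — $C^1_C(\R^d\setminus\{0\})$ suffices, as you correctly note — while the paper's argument is perhaps more mechanical (no case analysis). Your telescoping decomposition $f(S_n)-\sum f(x_j)=\sum_{k=2}^n[f(S_{k-1}+x_k)-f(S_{k-1})-f(x_k)]$ and the subadditivity step $|S_{k-1}|^\eps\le\sum_{j<k}|x_j|^\eps$ match the paper's inductive scheme, and your step (a) is the same interpolation used by the paper for the $|q|^\eps$ term.
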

\begin{lem}
\label{lemma:dri}
Let $R$ be a solution of \eqref{SFPE} such that for every $s<\b$, $\E|R|^s<\8$. Suppose that $N \E\|C_1\|^\b =1$, $\E |Q|^\b <\8$, then for any
 $f\in C_C^2(\R^d\setminus\{0\})$, the function $\psi_f$  is directly Riemann integrable  on $G = \R^+ \times K$.
\end{lem}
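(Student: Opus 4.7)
The plan is to combine the SFPE expansion of $R$ with the two-term estimate of Lemma \ref{ew6.1} to obtain a quantitative pointwise bound on $\psi_f$, and then verify the grid-summability condition defining directly Riemann integrability separately in the two directions $\|g\|\to 0$ and $\|g\|\to\infty$.

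Using $R \eqdist \sum_{i=1}^N C_i R_i + Q$ with $(R_i)$ iid and independent of $(C_i,Q)$, I would first rewrite
$$ \psi_f(g) \;=\; \|g\|^{-\beta}\,\E\Bigl[f\bigl(\textstyle\sum_i gC_iR_i + gQ\bigr) \;-\; \sum_i f(gC_iR_i)\Bigr], $$
and then apply Lemma \ref{ew6.1} with $x_i = gC_iR_i$ and $q = gQ$. Since $g$ is a similarity, $|gx|=\|g\|\,|x|$, so
$$ |\psi_f(g)| \;\le\; c\,\|g\|^{-\beta}\Bigl(\|g\|^{\eps}\E|Q|^{\eps} \;+\; \|g\|^{2\eps}\sum_{i\neq j}\E\bigl[\|C_i\|^{\eps}\|C_j\|^{\eps}\bigr](\E|R|^{\eps})^2\Bigr) $$
for any $\eps\in(0,1]$ with $\eps<\beta$; all the relevant moments are finite under the standing hypotheses ($\E|R|^s<\infty$ for $s<\beta$, $\E|Q|^\beta<\infty$, $\beta<s_\infty$). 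Continuity of $\psi_f$ follows by a dominated-convergence argument using this estimate as dominator.

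It then suffices to show that $\sup_{\|g\|\in[t,\,e^{\delta}t],\,k\in K}|\psi_f(g)|$ is summable over $t=e^{n\delta}$, $n\in\Z$. For $\|g\|\to\infty$ ($n\to+\infty$), I would take $\eps<\beta/2$; then both $\|g\|^{\eps-\beta}$ and $\|g\|^{2\eps-\beta}$ decay exponentially in $\log\|g\|$, giving a geometric tail. For $\|g\|\to 0$ ($n\to-\infty$), picking $\eps\in(\beta/2,\,\beta\wedge 1)$ makes the $\|g\|^{2\eps-\beta}$ term decay; the remaining $\|g\|^{\eps-\beta}\E|Q|^\eps$ contribution, however, blows up unless the Hölder estimate for $|f(x+q)-f(x)|$ is replaced by a second-order Taylor expansion of $f$ at $\sum_i gC_iR_i$ in the variable $gQ$. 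This uses $f\in C^2$ to produce a remainder of order $|gQ|^2$, contributing $\|g\|^{2-\beta}\E|Q|^2$ (decaying at $0$ when $\beta<2$), and a linear-in-$gQ$ term that is bounded via the fact that $\nabla f$ is compactly supported away from $0$, so $\nabla f(\sum_i gC_iR_i)\neq 0$ is a rare event controlled by $\E|R|^s$ for $s<\beta$ through Markov's inequality.

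The main obstacle is exactly the direction $\|g\|\to 0$: the naive Lemma \ref{ew6.1} estimate falls short whenever $\beta>1$, since one cannot take $\eps>1$, and it is the $C^2$-regularity of $f$ combined with a finer Taylor decomposition that rescues summability. For large $\beta$ the same strategy must be iterated by expanding the SFPE to level $n$ and exploiting the normalization $\sum_{|v|=n}\E\|L(v)\|^\beta=1$ to keep the product-moment terms under control while the remainder terms gain powers of $\|g\|$.
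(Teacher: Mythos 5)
Your proposal correctly identifies Lemma \ref{ew6.1} as the main tool and correctly observes that the large-$\|g\|$ direction is unproblematic, but it misses the key mechanism that the paper uses to control the $\|g\|\to 0$ direction, and the remedy you suggest in its place is not worked out and is doubtful.

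The paper's proof uses, in an essential way, that $f$ has compact support bounded away from the origin, say $\supp f\subset\R^d\setminus B_\eta(0)$. Because of this, $f\bigl(tk(\sum C_iR_i+Q)\bigr)$ and $f(tkC_iR_i)$ all vanish on the event $\{\sum_i|C_iR_i|+|Q|\le\eta t^{-1}\}$, so the bound from Lemma \ref{ew6.1} comes equipped with an indicator:
\begin{equation*}
|\psi_f(tk)|\le c\, t^{-\beta}\,\E\Bigl[\Bigl(\sum_{i\neq j}|tkC_iR_i|^{\eps}|tkC_jR_j|^{\eps}+|tkQ|^{\eps}\Bigr)\,\mathbf{1}_{\{\sum_i|C_iR_i|+|Q|>\eta t^{-1}\}}\Bigr].
\end{equation*}
This is exactly what your estimate lacks. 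Summing over $n$ with $e^n<t\le e^{n+1}$ is then done inside the expectation: defining the random index $n_0=\lceil\log\eta-1-\log(\sum_i|C_iR_i|+|Q|)\rceil$, the indicator forces $n\ge n_0$, and a geometric sum gives a factor $e^{-(\beta-\eps)n_0}\approx(\sum_i|C_iR_i|+|Q|)^{\beta-\eps}$, whose expectation (multiplied by $|Q|^{\eps}$ or $|C_iR_i|^{\eps}|C_jR_jR_j|^{\eps}$) is finite by H\"older and the moment hypotheses. The point is that the needed decay of $\psi_f$ as $\|g\|\to 0$ does not come from the analytic order of the bound in $\|g\|$, but from the smallness of the probability that $\|g\|(\sum C_iR_i+Q)$ lands in the support of $f$.

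Your proposed substitute for this --- a second-order Taylor expansion in the $gQ$ variable, and ``iterating the SFPE to level $n$'' for $\beta\ge 2$ --- is not carried out, and as stated it does not close the gap: the linear term you would pick up cannot in general be disposed of merely because $\nabla f$ is compactly supported (you still have to sum the resulting probabilities over dyadic scales, which is precisely the indicator/Markov argument you have not made), and the iterated-SFPE route is a sketch without any estimate. So the proof as written has a genuine gap in the $\|g\|\to 0$ direction; the compact-support cutoff is the missing idea.
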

\begin{proof}[Proof of Theorem \ref{thm: similarities}]
By the renewal theorem (Theorem A.1, \cite{BDGHU2009}), for any $f\in C_C^2(\R^d\setminus\{0\})$
$$
\lim_{\norm{a}\to 0; a\in G_{\mu}}\norm{a}^{-\b} \E f(aR) = \frac  1{m_\b}\int_G \psi_f(g) dg,
$$ where $dg$ is the Haar measure on $G$ normalized in such a way that for any radial function $f$ on $G$: $\int_G f(d)dg = \int_{0}^\8 f(t)\frac{dt}t$.

\medskip

In the same way as in \cite{BDGHU2009} we show that
the convergence above is valid also for $f\in C_C(\R^d\setminus \{0\})$ and thus there exists a Radon measure $\Lambda$ on $\R^d\setminus\{0\}$
such that
$$
\lim_{\norm{a}\to 0, a\in G_{\mu}} \norm{a}^{-\b} \E f(aR) = \is f{\Lambda}.
$$
Since $\Lambda$ is homogeneous it can be written in the form  $\Lambda = \sigma\otimes\frac{dt}{t^{\b+1}}$, i.e.
$$\langle f, \Lambda\rangle:=\int_{0}^{\8}\int_Sf(tw)\sigma(dw)\frac{dt}{t^{\b+1}},$$
where $\sigma$ is a finite measure on $S$.

Finally we have to justify the formula for $\sigma$, i.e. to prove
$$
\sigma(S) = \frac 1{m_\b} \E\bigg[ \bigg| \sum_{i=1}^N C_i R_i +Q
\bigg|^\b - \sum_{i=1}^N |C_i R_i|^\b
\bigg].
$$
{ For this purpose take an arbitrary} radial function
 $f\in C_C(\R^d\setminus\{0\})$.
To simplify our notation define
$$
I_f(s) = \int_0^\8 f(t)\frac{dt}{t^{1+s}}.
$$
Then, on the one hand
$$
\is f{\Lambda} = \int_0^\8\int_{S} f(tw) \sigma(dw)\frac{dt}{t^{\b+1}}
 =\sigma(S) \int_0^\8 f(t) \frac{dt}{t^{\b+1}} = \sigma(S) I_f(\b),
$$
and on the other, since $\psi_f$ is also a radial function we have
$$
\is f{\Lambda}= \frac 1{m_\b} \int_0^{\8} \psi_f(t) \frac{dt}t.
$$
Thus
\begin{equation}
\label{e8}
\sigma(S) I_f(\b) = \frac 1{m_\b} \int_0^{\8} \psi_f(t) \frac{dt}t.
\end{equation}
Notice that for $s<\b$ we have
$$
\int_0^\8 t^{-s} \E f(t|R|)\frac{dt}t = \E|R|^s \int_0^\8 f(t) \frac{dt}{t^{1+s}} = \E|R|^s I_f(s).
$$
Hence for $s<\b$, since $\E|R|^s <\8$, we may write
\begin{eqnarray*}
\int_0^\8 t^{\b-s}\psi_f(t)\frac{dt}t &=& \E\bigg[ |R|^s - \sum_{i=1}^N \|C_i\|^s |R_i|^s \bigg]\cdot I_f(s)\\
&=& \E\bigg[ \bigg|\sum_{i=1}^N C_i R_i +Q \bigg|^s - \sum_{i=1}^N \|C_i\|^s |R_i|^s \bigg]\cdot I_f(s).
\end{eqnarray*}
Now, letting $s\to\b^-$, in view of \eqref{e8}, we obtain
\begin{eqnarray*}
 \sigma(S)I_f(\b) &=& \lim_{s\to\b^-}
\frac{1}{m_\b}\int_0^\8 t^{\b-s}\psi_f(t)\frac{dt}t\\
&=& \lim_{s\to\b^-}\frac 1{m_\b} \E\bigg[ \bigg|\sum_{i=1}^N C_i R_i +Q \bigg|^s - \sum_{i=1}^N \|C_i\|^s |R_i|^s \bigg]\cdot I_f(s)\\
&=& \frac 1{m_\b} \E\bigg[ \bigg|\sum_{i=1}^N C_i R_i +Q \bigg|^\b - \sum_{i=1}^N \|C_i\|^\b |R_i|^\b \bigg] I_f(\b).
\end{eqnarray*}

\end{proof}

\section{Positivity of K}\label{sect:positivity}

The aim of this section is to study positivity of the limiting constants. We have proven up to now that tails of solutions to equations \eqref{SFPE} and \eqref{SFPE'} behave regularly at infinity, but we still do not know whether the limit is non-degenerate. Now we fill this gap.  
  We start with some preliminary estimates, that will be used in the proofs. Next we consider similarities for which our results are much stronger and give a complete answer, i.e. we prove Propositions \ref{prop:dichotomy} and \ref{prop:positivity}. Finally we consider general matrices and complete the proofs of Theorems \ref{main theorem} and \ref{main theorem2}.

\subsection{Moment bounds}\label{sect:moments}
Before we pass to proofs of positivity of the limiting constants we formulate here two Propositions containing useful estimates. However, since their proofs are long and technical we postpone them to Appendix \ref{app:proofs}.

\begin{proposition} \label{prop:Goldie lemma}
Assume that the following expectations are finite:
\begin{align}
\sup_{y \in S} \E{\abs{\abs{\sum_{i=1}^N yC_i R_i +yQ}^s - \abs{\sum_{i=1}^N
yC_iR_i}^s}} \tag{E0(s)}, \label{E0} \\
\sup_{y \in S}  \E{\abs{\abs{\sum_{i=1}^N yC_i R_i}^s - \abs{\max_i yC_iR_i}^s}}
\tag{E1(s)}, \label{E1} \\
\sup_{y \in S}  \E\abs{\sum_{i=1}^N \abs{yC_iR_i}^s - \abs{\max_i yC_iR_i}^s}
\tag{E2(s)}. \label{E2}
\end{align}
Then for all $y \in S$, the identity
\begin{equation}\label{Goldie Lemma}
\Erw{\abs{\sum_{i=1}^N yC_i R_i +yQ}^z
- \sum_{i=1}^N \abs{yC_i R_i}^z} =z \int_{0}^{\infty} t^{z-1} \left(
\P{\abs{yR} > t} - N\P{\abs{yCR} >t} \right) dt .
\end{equation}
holds true (for complex $z = s+iv$). Both sides of the equation above are finite and
 define holomorphic functions in the infinite strip $0 < \Re z < s$ provided that $0<s<s_{\8}$.
Moreover
$$ \sup_{y \in S} \int_{0}^{\infty} t^{s-1} \abs{
\P{\abs{yR} > t} - N\P{\abs{yCR} >t}} dt \le \ref{E0} + \ref{E1} + \ref{E2}.$$
\end{proposition}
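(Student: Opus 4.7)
The claim is a multivariate extension of Goldie's key lemma \cite[Lemma 9.4]{Goldie1991}. My plan has three steps: reduce the problem to a telescoping indicator identity; prove the supremum bound by splitting this telescope into pieces that each match one of \ref{E0}, \ref{E1}, \ref{E2}; and deduce the main identity (and its holomorphic extension) via the layer-cake formula plus Fubini.

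Write $A := |\sum_{i=1}^N yC_iR_i + yQ|$, $S := |\sum_{i=1}^N yC_iR_i|$, $B_i := |yC_iR_i|$, and $M := \max_{1\le i\le N} B_i$. By \eqref{SFPE}, $A \eqdist |yR|$, and since the $C_i$ are identically distributed with each $R_i \eqdist R$ independent of $C_i$, every $B_i \eqdist |yCR|$. Hence
\begin{equation*}
\P{|yR|>t} - N\,\P{|yCR|>t} = \Erw{\mathbf{1}_{\{A>t\}} - \sum_{i=1}^N \mathbf{1}_{\{B_i>t\}}}.
\end{equation*}
Now telescope
\begin{equation*}
\mathbf{1}_{\{A>t\}} - \sum_{i=1}^N \mathbf{1}_{\{B_i>t\}} = (\mathbf{1}_{\{A>t\}} - \mathbf{1}_{\{S>t\}}) + (\mathbf{1}_{\{S>t\}} - \mathbf{1}_{\{M>t\}}) - \Big(\sum_{i=1}^N \mathbf{1}_{\{B_i>t\}} - \mathbf{1}_{\{M>t\}}\Big),
\end{equation*}
and combine with the elementary identity $\int_0^\infty t^{s-1}|\mathbf{1}_{\{U>t\}} - \mathbf{1}_{\{V>t\}}|\,dt = s^{-1}|U^s - V^s|$ for $U,V \ge 0$ (the integrand is $\mathbf{1}_{\{U\wedge V \le t < U\vee V\}}$). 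Taking absolute values, integrating $t^{s-1}$, and then expectations, the three pieces contribute $s^{-1}\ref{E0}$, $s^{-1}\ref{E1}$, and $s^{-1}\ref{E2}$ respectively (the third piece is already nonnegative, so no absolute value is lost), giving the claimed supremum bound.

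For the identity, apply iteratively the complex layer-cake formula $X^z - Y^z = z\int_0^\infty t^{z-1}(\mathbf{1}_{\{X>t\}} - \mathbf{1}_{\{Y>t\}})\,dt$ (valid for $X, Y \ge 0$ and $\Re z > 0$, since $z\int_0^\infty t^{z-1}\mathbf{1}_{\{X>t\}}dt = X^z$) to obtain the pointwise identity
\begin{equation*}
A^z - \sum_{i=1}^N B_i^z = z \int_0^\infty t^{z-1}\Big(\mathbf{1}_{\{A>t\}} - \sum_{i=1}^N \mathbf{1}_{\{B_i>t\}}\Big) dt.
\end{equation*}
The bound from the previous paragraph is precisely the Fubini hypothesis needed to swap $\Erw{\cdot}$ and $\int$; after the swap, substituting $A \eqdist |yR|$ and $B_i \eqdist |yCR|$ on the right recovers the stated identity. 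Holomorphicity on the strip $0 < \Re z < s$ follows by differentiating under the integral: on any sub-strip $\eta \le \Re z \le s - \eta$, the integrand is dominated by $(t^{s-1} + t^{\eta-1})$ times the absolute indicator difference, which is integrable in $(t,\omega)$ by the previous step, legitimizing both absolute convergence and differentiation. The main obstacle is arranging the telescope so that its three pieces correspond \emph{exactly} to the three hypotheses \ref{E0}, \ref{E1}, \ref{E2}; once this bookkeeping is set up, the layer-cake identity and Fubini complete the proof mechanically.
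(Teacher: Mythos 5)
Your proof is correct and follows essentially the same route as the paper: telescope the indicator difference through $\bigl|\sum_i yC_iR_i+yQ\bigr|$, $\bigl|\sum_i yC_iR_i\bigr|$, $\max_i|yC_iR_i|$ and $\sum_i|yC_iR_i|$, apply the elementary identity $\int_0^\infty t^{s-1}\bigl|\mathbf{1}_{\{U>t\}}-\mathbf{1}_{\{V>t\}}\bigr|\,dt = s^{-1}|U^s-V^s|$ to each piece (the paper packages this as Goldie's Lemma 9.4 plus its own Lemma~\ref{lemma2}), and then invoke Fubini to get both the integral bound and the identity. The only inessential difference is that the paper establishes holomorphicity via a Morera-type argument while you differentiate under the integral sign; both are standard and both require the same small observation — which you (like the paper) leave implicit — that for $\Re z < s$ the integrand is controlled near $t=0$ by the boundedness of the indicator difference and near $t=\infty$ by the $t^{s-1}$ bound already proved.
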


\begin{proposition}\label{extension of moments}
For $\rho < s_\infty$, there exists $\epsilon >0$, such that $\E
\abs{R}^{\rho-\epsilon} < \infty$ implies finiteness of $E0(\rho+\epsilon)$,
$E1(\rho+\epsilon)$, $E2(\rho+\epsilon)$.
\end{proposition}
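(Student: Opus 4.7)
The plan is to bound each of the three expectations at level $s = \rho + \epsilon$ by sums of cross-products of $|y C_i R_i|$, $|y C_j R_j|$ and (for E0) $|y Q|$, and then reduce matters to moments of $R$ below $\rho$ and moments of $\|C\|$, $|Q|$ below $s_\infty$ via Hölder's inequality, exploiting that $R_1, \ldots, R_N$ are iid and independent of $(C_1, \ldots, C_N, Q)$. The pointwise tools I would use are: for $s \ge 1$ and $\eta \in (0, 1]$, the mean-value bound $\bigl||a+b|^s - |a|^s\bigr| \le C_s\bigl(|b|^s + |a|^{s-\eta}|b|^\eta\bigr)$; for $s \le 1$, the subadditive bound $\bigl||a+b|^s - |a|^s\bigr| \le |b|^s$; and for any reals $z_1, \ldots, z_N$, the estimate $\sum_i |z_i|^s - \max_i|z_i|^s \le \sum_{i\ne j}|z_i|^{s/2}|z_j|^{s/2}$, obtained by picking $i^*$ with $|z_{i^*}|=\max_i|z_i|$ and writing $|z_i|^s \le |z_i|^{s/2}|z_{i^*}|^{s/2}$ for $i \ne i^*$.

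The bound for E2 is immediate from the last inequality combined with $|y C_i R_i| \le \|C_i\||R_i|$. For E1, I would split $\sum_{i=1}^N y C_i R_i = z^* + W$ with $z^*$ the ``max'' term and $W$ the sum of the rest, apply the first inequality, and expand $|W|^s$, $|W|^\eta$ via power-mean/subadditivity into a finite sum of cross-products $|y C_i R_i|^{s_1}|y C_j R_j|^{s_2}$ with $s_1 + s_2 = s$. For E0, setting $A = \sum_{i=1}^N y C_i R_i$ and $Q' = yQ$, the same inequality produces the term $|Q'|^s$ (controlled by $\E|Q|^s < \infty$) and a cross term $|A|^{s-\eta}|Q'|^\eta$ with $|A|^{s-\eta} \le C\sum_i \|C_i\|^{s-\eta}|R_i|^{s-\eta}$.

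Each resulting expectation $\E[\|C_i\|^a\|C_j\|^b|R_i|^a|R_j|^b]$ factors into $\E[\|C_i\|^a\|C_j\|^b]\cdot \E|R|^a \cdot \E|R|^b$ by independence, and Hölder with exponents $s/a$, $s/b$ bounds the $C$-factor by $\E\|C\|^s$; mixed $C$-$Q$ terms are handled identically using $\E|Q|^{s_\infty}<\infty$. Taking $\eta = 2\epsilon$ and $\epsilon \in \bigl(0, \min\{\rho/3,\, (s_\infty - \rho)/2,\, 1/2\}\bigr)$ guarantees $s - \eta = \rho - \epsilon$ (so $\E|R|^{s-\eta}$ is finite by hypothesis), $\E|R|^{s/2}$ is finite, and all $C$- and $Q$-exponents lie below $s_\infty$. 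The delicate point is the two-sided constraint on $\eta$ in E0 and E1: it must be large enough to bring every $|R|$-exponent below $\rho$ (so the hypothesis on $R$ applies), yet small enough that the $C$- and $Q$-exponents remain below $s_\infty$; the slackness $s < s_\infty$ (forcing $\epsilon$ small) is precisely what makes these two conditions simultaneously satisfiable.
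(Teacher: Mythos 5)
Your proposal is correct, and it reaches the proposition by a genuinely different and, in places, more elementary route than the paper. The paper factors the argument through three separate lemmas (Lemmas \ref{lemma:E0}, \ref{lemma:E1}, \ref{lemma:E2}) with their own moment hypotheses, each proved by a different technique: Lemma \ref{lemma:E0} via the mean-value bound with the fixed exponent pair $(s-1,1)$; Lemma \ref{lemma:E2} by conditioning on $(C_1,\dots,C_N)$, applying $1-x\le e^{-x}$, and integrating a Markov-type tail estimate; and Lemma \ref{lemma:E1} by expanding the square $\left|\sum_i yC_iR_i\right|^2$ to split into two pieces $H_1,H_2$ (with a multinomial identity for $s>2$). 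You instead attack all three quantities in a unified way via a single tunable interpolation parameter $\eta$, reducing everything to cross-products $|yC_iR_i|^{s_1}|yC_jR_j|^{s_2}$ (or $\|C_i\|^{s_1}|R_i|^{s_1}|Q|^{s_2}$) with $i\neq j$ and $s_1+s_2=s$, which factor by independence and H\"older. Your treatment of $E2$ is particularly streamlined (the inequality $\sum_i|z_i|^s-\max_i|z_i|^s\le\sum_{i\ne j}|z_i|^{s/2}|z_j|^{s/2}$ completely avoids the $e^{-x}$ device), and your $E1$ argument --- peel off the maximal summand $z^*$ and interpolate --- avoids the paper's square expansion and the multi-index bookkeeping for $s>2$. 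The trade-off is that the paper's lemmas are more general (e.g.\ Lemma \ref{lemma:E2} needs only $\gamma\in(s/2,s)$), while your argument is tailored to the specific relation $s=\rho+\epsilon$ with $\eta=2\epsilon$; but that is all the proposition asks for, and your explicit bookkeeping of the constraints on $\epsilon$ is cleaner than the paper's ``can always be fulfilled''.

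One step you leave implicit and should make explicit: after the mean-value bound gives $C_s\bigl(|W|^s+|z^*|^{s-\eta}|W|^\eta\bigr)$, the term $|W|^s$ expanded naively by power-mean yields \emph{single-factor} summands $|yC_iR_i|^s$, whose $R$-moment at exponent $s=\rho+\epsilon$ you cannot control. You must first convert $|W|^s$ into a cross-product, e.g.\ via $|W|\le(N-1)|z^*|$ giving $|W|^s\le(N-1)^{s-\eta}|z^*|^{s-\eta}|W|^\eta$ (or, for $s<1$, via $|yC_iR_i|^s\le|yC_iR_i|^{s/2}|z^*|^{s/2}$), exploiting that the maximiser index $i^*$ differs from every $i$ over which $W$ is summed. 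Once that is inserted, the rest of your argument, including the final choice $\eta=2\epsilon$ and $\epsilon<\min\{\rho/3,\ s_\infty-\rho,\ 1/2\}$, goes through and matches the constraints the paper derives from its three lemmas.
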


\subsection{Similarities}
\begin{proof}[Proof of Proposition \ref{prop:dichotomy}]  
{ First note, that $K_+ >0$ if and only if $\sigma(S)>0$.}
We are going to use Propositions \ref{prop:Goldie lemma} and
\ref{extension of moments} in the formulation adjusted to
the similarities. Namely, let
\begin{align}
\sup_{y \in S} \E{\abs{\abs{\sum_{i=1}^N C_i R_i +Q}^s -
\abs{\sum_{i=1}^N
C_iR_i}^s}}, \tag{E0(s)'} \label{E0'} \\
\sup_{y \in S}  \E{\abs{\abs{\sum_{i=1}^N C_i R_i}^s - \max_i
\abs{C_iR_i}^s}},
\tag{E1(s)'} \label{E1'} \\
\sup_{y \in S}  \E\abs{\sum_{i=1}^N \abs{C_iR_i}^s - \max_i
\abs{C_iR_i}^s}.
\tag{E2(s)'} \label{E2'}
\end{align}
Then Proposition \ref{extension of moments} holds with the same
proof, and the finiteness of \eqref{E0'}, \eqref{E1'}, \eqref{E2'} implies, as before, the
identity
\begin{equation}\label{simlemma}
\Erw{\abs{\sum_{i=1}^N C_i R_i +Q}^z - \sum_{i=1}^N \abs{C_i
R_i}^z} =z \int_{0}^{\infty} t^{z-1} \left( \P{\abs{R} > t} -
N\P{\abs{CR} >t} \right) dt,
\end{equation}
for complex $z=s+iv$; moreover, both sides are holomorphic { for $\Re z < \beta + \epsilon$. 
On the other hand, we have for complex $z=s+iv$, $\alpha < s <\beta$, since $\E \abs{R}^s < \infty$,
$$
\E \bigg[ \abs{R}^z - \sum _{i=1}^N|C_iR_i|^z\bigg]= \E\bigg[ \abs{R}^z - \sum
_{i=1}^N|C_i|^z|R_i|^z\bigg]= (1-m(z))\E\abs{R}^z,$$
i.e.
$$
\E\abs{R}^z=\frac{\E \abs{R}^z - \sum
_{i=1}^N|C_iR_i|^z}{1-m(z)},\ \ \mbox{for\ \  $\Re z <\beta $.}$$
Now the rest
of the proof of Proposition \ref{prop:dichotomy} is the same as
the arguments in \cite{BDGHU2009}, but we include it here for
completeness.}

Suppose that $K=0$. Then both the numerator and the
denominator of the right hand side are holomorphic in $0<\Re
z<\beta +\eps $ and $1-m(z)$ has a simple zero at $\beta $
($m'(\beta)\neq 0$). Therefore, the right hand side is holomorphic
for $\Re z < \beta +\eps $. On the other hand, if $z=s\in \R$, the
left hand side is the Mellin transform $\hat \gamma (s)$ of the
law of $|R|$, which is well defined for $s<\theta _{\infty}$
called the abscissa of convergence of $\hat \gamma $. The Landau
theorem (see \cite{Widder1941}, Theorem 5, page 57) says that $\hat \gamma$
cannot be extended holomorphically to a neighborhood of $\theta
_\infty$. Hence $\beta + \eps < \theta _{\infty } ${,  i.e. $\E \abs{R}^{\beta + \epsilon}<\infty$. } Now
suppose that { $\theta _{\infty}<s_\infty $.} Then { \eqref{simlemma}} is
holomorphic for $\Re z < \theta _\infty +\eps$, and so repeating
the above argument (using that $m(z)\neq 0$ for $z\neq { \alpha}, \beta $)
we get a contradiction.
\end{proof}

\subsection{A sufficient condition for the limit to be positive}

{ We introduce the r.v.} $$ B:= \sum_{i=2}^N C_i R_i +Q,$$
since we will use some features of the affine equation $R \stackrel{d}= C_1R+B$ in the sequel.

\begin{lemma}
If \eqref{no trivial solution} holds, then $\P{C_1 r +
B=r}<1$, for all $r \in \R^d$.
\end{lemma}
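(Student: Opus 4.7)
The plan is to argue by contradiction. Suppose that for some $r\in\R^d$ one has $\P{C_1 r + B = r}=1$; equivalently,
$$
\sum_{i=2}^N C_iR_i + Q \;=\; r - C_1 r \quad \text{almost surely.}
$$
Since $R_1$ is independent of $(C_1,\dots,C_N,Q,R_2,\dots,R_N)$, I would substitute this a.s.\ identity into the distributional fixed-point equation $R \eqdist \sum_{i=1}^N C_iR_i + Q$, rewriting its right-hand side as $C_1R_1 + (r-C_1 r) = r + C_1(R_1-r)$. Setting $X:=R-r$ and using that $R_1\eqdist R$ is independent of $C_1$, this collapses the fixed-point equation to the clean distributional relation
$$
X \;\eqdist\; C X',
$$
where $X'\eqdist X$ is independent of $C\eqdist C_1$.

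Next I iterate this relation. A straightforward induction (introducing a fresh iid copy of $C$ at each step and using the resulting independence structure) yields, for every $n\ge 1$, the distributional identity $X \eqdist \Pi_n X^{(n)}$, where $\Pi_n = C^{(1)}\cdots C^{(n)}$ is the product of $n$ iid copies of $C$ from Section~\ref{sect:WBP} and $X^{(n)}\eqdist X$ is independent of $\Pi_n$. Fix any $s\in(\alpha,\beta)$; by Propositions~\ref{EU:hom} and~\ref{EU:inhom} we have $\E\abs{R}^s<\infty$, hence $\E\abs{X}^s \le c_s\bigl(\E\abs{R}^s + \abs{r}^s\bigr)<\infty$ for a suitable constant $c_s$. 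Sub-multiplicativity of the operator norm and the independence of $\Pi_n$ from $X^{(n)}$ then give
$$
\E\abs{X}^s \;=\; \E\abs{\Pi_n X^{(n)}}^s \;\le\; \E\norm{\Pi_n}^s \cdot \E\abs{X}^s.
$$
By the very definition \eqref{Def:m} of $m$, $(\E\norm{\Pi_n}^s)^{1/n} \to m(s)/N \le 1/N < 1$ as $n\to\infty$, so $\E\norm{\Pi_n}^s<1$ for all sufficiently large $n$. The displayed inequality therefore forces $\E\abs{X}^s=0$, whence $X\equiv 0$, i.e.\ $R=r$ almost surely.

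Feeding $R=r$ back into the fixed-point equation finally yields $\P{\sum_{k=1}^N C_kr + Q = r}=1$, in direct contradiction with \eqref{no trivial solution}. The one step requiring care is the substitution producing the equation $X\eqdist CX'$ with $C$ and $X'$ independent: one must exploit that $R_1$ is independent of $(C_1,\dots,C_N,Q,R_2,\dots,R_N)$, so that replacing the term $\sum_{i\ge 2}C_iR_i+Q$ (which, by hypothesis, coincides a.s.\ with the function $r-C_1 r$ of $C_1$) preserves the independence between $C_1$ and $R_1$. Everything after that is a routine application of the definition of $m$.
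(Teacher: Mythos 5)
Your proof is correct, but it takes a genuinely different route from the paper's. The paper argues directly and structurally: assuming $r - C_1 r - Q = \sum_{i=2}^N C_i R_i$ a.s., it conditions on $(C_1,\dots,C_N,Q)$, under which the left side becomes a constant while the right side is a sum of independent, non-degenerate (here invertibility of the $C_i$ is used) random vectors $C_iR_i$; such a sum cannot be a.s.\ constant, so the conditional probability of the identity is $<1$, hence so is the unconditional one. Your argument is instead a contraction argument: you feed the hypothetical a.s.\ identity back into the fixed-point equation, collapse it to the clean relation $X \eqdist CX'$ for $X := R-r$ with $C, X'$ independent, iterate to get $X\eqdist\Pi_nX^{(n)}$, and use the definition of $m$ together with the moment bound $\E\abs{R}^s<\infty$ for $\alpha<s<\beta$ to force $\E\abs{X}^s=0$, hence $R=r$ a.s., contradicting \eqref{no trivial solution}. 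Two remarks on the comparison: your version does not require invertibility of the $C_i$, which the paper's conditioning step implicitly uses; on the other hand it leans on the existence propositions for the finiteness of $\E\abs{R}^s$ and on the $\eqdist$-iteration being set up carefully (independence bookkeeping at each stage), both of which you handle correctly. The paper's argument is more elementary and self-contained; yours is more robust to degeneracies of the matrices. Both are valid.
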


\begin{proof}
Condition \eqref{no trivial solution} states that no Dirac measure solves the
fixed point equation, so $R$ takes at least two different values. If now
$$ r - C_1 r - Q = \sum_{i=2}^N C_i R_i ,$$
one sees by conditioning on $(C_1, \dots, C_N,Q)$ (then the LHS is constant),
that this identity can only hold on a set where all values of the $R_i$ are
fixed. But, since they are independent and non degenerated, this set has
probability smaller than one. So  $\P{C_1 r +
B=r}<1$ for all $r \in \R^d$ .
\end{proof}

\begin{proof}[Proof of Proposition \ref{prop:positivity}]
Obviously, if \eqref{no trivial solution} does not hold,  the solution is a constant, and $\sigma(S)=0$.
Conversely, assume that $\sigma(S)=0$, so by Proposition \ref{prop:dichotomy}, $\E \abs{R}^s<\infty$ for all $s < s_\infty$.
With the definition of $B$, $R$ satisfies the affine stochastic fixed point
equation $$ R \eqdist C_1 R +B ,$$ which was studied in \cite{BDGHU2009}. By the assumption $\E \abs{Q}^\gamma < \infty$, we also have $\E \abs{B}^\gamma < \infty$.
But then \eqref{no trivial solution} may not hold, since otherwise condition \textbf{H} of \cite{BDGHU2009} is satisfied, and it is shown in \cite[Proposition 2.6]{BDG2010}, that then $\E \abs{R}^\gamma = \infty$, which would be a contradiction.
\end{proof}

\subsection{General matrices}
In the case of general matrices, the crucial identity becomes more subtle. Using $T_s \nu_s = \frac{m(s)}{N} \nu_s$ and $\E \abs{R}^s < \infty$, $m(s)<1$ for all $s \in (\alpha,
\min\{\beta,s_\infty\})$ we have
$$ m(s)\int_S \E \abs{yR}^s \nu_s(dy) =  N \int_S T_s \E \abs{yR}^s \nu_s(dy) =
\sum_{i=1}^N \int_S \E \abs{yC_i R_i}^s \nu_s(dy) ,$$ and thus the
 identity
\begin{equation}
\label{identity} \int_S \E \abs{yR}^s \nu_s(dy) = (1 - m(s))^{-1} \int_S \Erw{
\abs{yR}^s - \sum_{i=1}^N \abs{yC_iR_i}^s } \nu_s(dy).
\end{equation}

We still can show that the right hand side has a holomorphic extension around $\beta$ if and only if $K=0$, but unfortunately, we cannot use the Landau lemma, because the LHS is not a Mellin transform, only a mixture of those.
Nevertheless, what we can show by this argument is: If $\E \abs{R}^\beta = \infty$, then the right hand side cannot have a holomorphic extension, thus $K >0$. (Note that $\E \abs{R}^\beta < \infty$ readily implies $K=0$.)

\begin{prop}\label{RHS:holomorphic}
There is $\delta >0$, such that
$$ K(z) := \int_S \Erw{
\abs{yR}^z - \sum_{i=1}^N \abs{yC_iR_i}^z } \nu_z(dy), $$
is a holomorphic function on $B_\delta(\beta)$.
\end{prop}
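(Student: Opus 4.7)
The plan is to exhibit $K(z)$ as the pairing $\skalar{\nu_z, \Phi(z)}$, where $\Phi(z) \in C(S,\C)$ denotes the function $y \mapsto F(y,z) := \E\big[\abs{yR}^z - \sum_{i=1}^N \abs{yC_iR_i}^z\big]$, and to verify that both factors $z \mapsto \nu_z \in C(S,\C)'$ and $z \mapsto \Phi(z) \in C(S,\C)$ are holomorphic Banach-space-valued maps on a common small ball $B_\delta(\beta)$. Holomorphy of $z \mapsto \nu_z$ is already supplied by Corollary \ref{cor:perturbation} (after complexification of the codomain), so the work concentrates on $\Phi$.

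By Propositions \ref{EU:hom} and \ref{EU:inhom}, $\E\abs{R}^s < \infty$ for every $s < \beta$. Applying Proposition \ref{extension of moments} with $\rho = \beta$ produces $\epsilon > 0$ for which $E_0(\beta+\epsilon)$, $E_1(\beta+\epsilon)$, $E_2(\beta+\epsilon)$ are all finite. Substituting $R \eqdist \sum_i C_i R_i + Q$ into $F$ and invoking Proposition \ref{prop:Goldie lemma} yields the integral representation
\begin{equation*}
F(y,z) \;=\; z \int_0^\infty t^{z-1}\bigl(\P{\abs{yR} > t} - N\,\P{\abs{yCR} > t}\bigr)\,dt,
\end{equation*}
which, for each fixed $y \in S$, is holomorphic in $z$ on the strip $\{0 < \Re z < \beta + \epsilon\}$. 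Moreover, the bound of Proposition \ref{prop:Goldie lemma} gives $\sup_{y \in S}\abs{F(y,z)}$ bounded on compact subsets of this strip.

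Shrink $\delta$ so that $B_\delta(\beta) \subset \{0 < \Re z < \beta + \epsilon\}$. For every fixed $z$, continuity of $y \mapsto F(y,z)$ follows from dominated convergence: $y \mapsto \P{\abs{yR} > t}$ and $y \mapsto \P{\abs{yCR} > t}$ are continuous at $\lambda$-a.e.\ $t$, and the tail estimate of Proposition \ref{prop:Goldie lemma} furnishes an integrable dominant. Hence $\Phi$ maps $B_\delta(\beta)$ into $C(S,\C)$. To upgrade to Banach-valued holomorphy it suffices, since weak holomorphy implies strong holomorphy (cf.\ \cite{Mujica1986}), to check that $z \mapsto \skalar{\mu, \Phi(z)} = \int_S F(y,z)\,\mu(dy)$ is holomorphic for every $\mu \in C(S,\C)'$. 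This follows from Morera's theorem: for any closed contour $\gamma \subset B_\delta(\beta)$, the uniform bound on $F$ legitimises Fubini, so
\begin{equation*}
\oint_\gamma \int_S F(y,z)\,\mu(dy)\,dz \;=\; \int_S \oint_\gamma F(y,z)\,dz\,\mu(dy) \;=\; 0,
\end{equation*}
the inner integral vanishing by holomorphy of $z \mapsto F(y,z)$ for each $y$.

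Consequently $K(z) = \skalar{\nu_z, \Phi(z)}$ is the pairing of two holomorphic Banach-space-valued maps, hence holomorphic on $B_\delta(\beta)$. The main technical hurdle is the analytic continuation of $F(y,z)$ across the threshold $\Re z = \beta$, where the individual moments $\E\abs{yR}^z$ diverge; what saves the day is the telescoping cancellation against $\sum_i \E\abs{yC_iR_i}^z$, encoded precisely by the Goldie-type identity of Proposition \ref{prop:Goldie lemma}, and the locally uniform $y$-bounds that make Fubini and dominated convergence applicable.
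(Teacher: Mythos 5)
Your proof is correct and follows essentially the same route as the paper's: finiteness of $E_0,E_1,E_2$ at $\beta+\epsilon$ via Proposition \ref{extension of moments}, pointwise holomorphy of $k(y,z)$ via Proposition \ref{prop:Goldie lemma}, and holomorphy of $z\mapsto\nu_z$ via Corollary \ref{cor:perturbation}. You fill in a step the paper leaves implicit --- namely verifying that $y\mapsto k(y,z)$ actually lands in $C(S,\C)$ and that $z\mapsto k(\cdot,z)$ is $C(S,\C)$-valued holomorphic (via weak implies strong holomorphy and a Morera/Fubini argument) before taking the duality pairing with $\nu_z$ --- which is a worthwhile elaboration of the paper's terse ``But then $K(z)=\int k(y,z)\,\nu_z(dy)$ also defines a holomorphic function.''
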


\begin{proof}
By Proposition \ref{extension of moments}, there is $\epsilon >0$ such that
$E0(\beta+\epsilon), E1(\beta+\epsilon), E2(\beta+\epsilon)$ are finite. By
Proposition \ref{prop:Goldie lemma} then for each $y \in S$,
$$ k(y,z):=\Erw{\abs{\sum_{i=1}^N yC_i R_i +yQ}^z
- \sum_{i=1}^N \abs{yC_i R_i}^z},$$
is holomorphic. By Corollary \ref{cor:perturbation},
there is a (possibly smaller) $\delta >0$, such that $\nu_z$ is a holomorphic
function on $B_\delta(\beta)$. But then
$$ K(z) = \int k(y,z) \nu_z(dy),$$
also defines a holomorphic function on $B_\delta(\beta)$.
\end{proof}

{ Also by Corollary \ref{cor:perturbation} , $m(s)=N \kappa(s)$} is holomorphic on $B_\delta(\beta)$, with $m(\beta)=1$. Since $m'(\beta)>0$ (by convexity), the right hand side in \eqref{identity} has a holomorphic extension around $\beta$ if and only if $K=0$.
Assume that $K=0$. The left hand side is a holomorphic function on $B_\delta(\beta)\cap\{\Re z < \beta\}$, which is a domain in $\C$, thus a holomorphic extension of the right hand side is also a holomorphic extension of the left hand side. Let us call it $\Psi$. Unfortunately we cannot conclude that $\Psi(s)=\int_S \E \abs{yR}^s \nu_s(dy)$ for $s \ge \beta$,  this is caused by the extra dependence on $z$ appearing in $\nu_z$ { and this is why the Landau lemma} cannot be used here.
We only know, that $\Psi(s)= \int_S \E \abs{yR}^s \nu_s(dy)$ for $s < \beta$. But that at least implies that if $\Psi$ exists on $B_\delta(\beta)$, then
$$ \lim_{s \uparrow \beta} \int_S \E \abs{yR}^s \nu_s(dy) = \lim_{s \to \beta} \Psi(s) < \infty .$$

So the following lemma gives the contradiction if $\E \abs{R}^\beta = \infty$, implying that $K>0$ at the outset.

\begin{lemma}\label{landau lemma}
 If $\E \abs{R}^\beta = \infty$, then
\begin{equation}
\label{lim:g} \lim_{s \to \sigma} \int_S \E \abs{yR}^s \nu_s(dy) = \infty.
\end{equation}
\end{lemma}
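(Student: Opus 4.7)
The strategy is to replace the $s$-dependent eigenmeasure $\nu_s$ by a single minorizing measure and then to commute a $\liminf$ with the two resulting integrations via Fatou. Fix a compact interval $I\subset[0,s_\infty)$ containing $\beta$ in its interior. By Lemma \ref{lemma:supp:nu}, there exist $p>0$ and a probability measure $\phi$ on $S$ with $\supp(\phi)=S$ such that $\nu_s\ge p\,\phi$ for every $s\in I$. For such $s$ this gives
\[
\int_S\E\abs{yR}^s\,\nu_s(dy)\ \ge\ p\int_S\E\abs{yR}^s\,\phi(dy)\ =\ p\,\E\!\int_S\abs{yR}^s\,\phi(dy),
\]
by Tonelli.

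Next, since $\abs{yR}^s\ge0$ and $\lim_{s\uparrow\beta}\abs{yR}^s=\abs{yR}^\beta$ pointwise in $(y,\omega)$, two successive applications of Fatou's lemma (first in $\phi(dy)$, then in the outer expectation) yield
\[
\liminf_{s\uparrow\beta}\int_S\E\abs{yR}^s\,\nu_s(dy)\ \ge\ p\int_S\E\abs{yR}^\beta\,\phi(dy).
\]
It remains to prove that the right-hand side equals $+\infty$ when $\E\abs{R}^\beta=\infty$. For $R\ne 0$ one has $\int_S\abs{yR}^\beta\,\phi(dy)=\abs{R}^\beta h(\esl{R})$, where
\[
h(u)\ :=\ \int_S\abs{yu}^\beta\,\phi(dy),\qquad u\in S,
\]
so it suffices to produce a constant $c>0$ with $h\ge c$ on $S$.

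Dominated convergence (with $\abs{yu}\le 1$) shows that $h$ is continuous on $S$. For every $u\in S$, the set $\{y\in S:\abs{yu}>1/2\}$ is open and nonempty (it contains $u$); because $\supp(\phi)=S$, it has positive $\phi$-mass, so $h(u)>0$. Compactness of $S$ then upgrades this to $h\ge c>0$, whence $p\int_S\E\abs{yR}^\beta\,\phi(dy)\ge pc\,\E\abs{R}^\beta=\infty$, proving the lemma.

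The critical step I expect to be the main obstacle is precisely this uniform positivity of $h$: if $\phi$ were concentrated on the hyperplane orthogonal to some $u_0\in S$, then $h(u_0)$ would vanish and integration over $y$ could destroy the divergence of $\E\abs{R}^\beta$. This is exactly why the full-support conclusion of Lemma \ref{lemma:supp:nu} is indispensable rather than a mere positivity statement about $\nu_s$.
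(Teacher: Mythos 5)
Your proof is correct, and it departs from the paper's argument at the decisive step. The paper restricts the integrand to $\{\abs{yR}\ge1\}$ (to gain monotonicity in $s$), applies the minorization $\nu_s\ge p\,\phi$, moves the $\liminf$ inside the $\phi(dy)$-integral by Fatou, and then invokes a separate topological result, Lemma \ref{infinite on a open set}, to produce a nonempty open set $B_\delta(x_0)\cap S$ on which $\E\abs{yR}^\beta=\infty$; since $\supp\phi=S$, this set has positive $\phi$-mass and the integral diverges. You instead push the $\liminf$ all the way inside (Fatou twice, or equivalently Tonelli on $S\times\Omega$ followed by a single Fatou), arriving at $p\int_S\E\abs{yR}^\beta\,\phi(dy)$, and then exploit the homogeneity $\int_S\abs{yR}^\beta\,\phi(dy)=\abs{R}^\beta\,h(\esl{R})$ together with continuity and compactness of $S$ to obtain the uniform bound $h\ge c>0$, whence the quantitative inequality $\liminf_{s\uparrow\beta}\int_S\E\abs{yR}^s\,\nu_s(dy)\ge pc\,\E\abs{R}^\beta$. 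Both arguments rest on exactly the same structural input, namely $\supp\phi=S$ from Lemma \ref{lemma:supp:nu}, which you rightly single out as the indispensable ingredient. What your route buys: it dispenses entirely with Lemma \ref{infinite on a open set} and its two-case density argument over $\R^d$, replacing it by a short compactness bound, and it produces an explicit lower bound rather than bare divergence. The paper's route instead localizes the divergence to one ball around some $x_0\in S$ and pays for that localization with the extra lemma. Both are valid; yours is the more self-contained.
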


\begin{proof}
Now if $\E \abs{R}^\sigma = \infty$, then there is $x_0 \in S$ with
$\E\abs{x_0R}^\sigma=\infty$. By Lemma
\ref{infinite on a open set}, then there is already $\epsilon >0$ such that $\E
\abs{yR}^\sigma = \infty $ for all $y \in B_\epsilon(x_0) \cap S$. By Lemma
\ref{lemma:supp:nu},
\begin{align*}
\liminf_{s \to \sigma} \int_S \E \abs{yR}^s \nu_s(dy) \ge & \liminf_{s \to \sigma} \int_S \Erw{\abs{yR}^s
\1[\{\abs{yR}\ge 1 \}] } \nu_s(dy) \\
\ge & \liminf_{s \to \sigma} p \int_S \Erw{\abs{yR}^s
\1[\{\abs{yR}\ge 1 \}] } \phi(dy) \\
\ge & p \int_{B_\delta(x_0) \cap S} \liminf_{s \to \sigma} \Erw{\abs{yR}^s
\1[\{\abs{yR}\ge 1 \}] } \phi(dy) = \infty.
\end{align*}
In the penultimate line, we used Fatou's lemma.
\end{proof}

\begin{lemma}\label{infinite on a open set}
{If $ \Erw{\abs{xR}^s} = \infty$ for some $x \in S$, then there
exists $\epsilon >0$  such that $ \Erw{\abs{yR}^s} = \infty$
for all $y \in B_\epsilon(x)$. }
\end{lemma}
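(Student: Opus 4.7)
The plan is to observe that the set
$$ V := \{u \in \R^d : \E|uR|^s < \infty\}$$
is a linear subspace of $\R^d$. Once this is established, the lemma is immediate: every linear subspace of $\R^d$ is closed (being finite-dimensional), so its complement is open; the hypothesis $\E|xR|^s = \infty$ says $x \in \R^d \setminus V$, and hence an entire Euclidean ball $B_\epsilon(x)$ lies in $\R^d \setminus V$, which is exactly the conclusion.

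To verify that $V$ is a subspace, I would check closure under scalar multiplication and addition. The former is immediate: $|\lambda uR|^s = |\lambda|^s |uR|^s$, so $h(\lambda u) = |\lambda|^s h(u)$. For addition, I would use the elementary pointwise bound
$$|(u+v)R|^s \le c_s\bigl(|uR|^s + |vR|^s\bigr),$$
valid for all $u,v \in \R^d$ with $c_s = 1$ when $s \in (0,1]$ (by subadditivity of $t \mapsto t^s$) and $c_s = 2^{s-1}$ when $s \ge 1$ (by convexity). Taking expectations yields $h(u+v) \le c_s(h(u) + h(v))$, so $u,v \in V$ implies $u+v \in V$. Combined with scalar-multiplication closure, this makes $V$ a real linear subspace of $\R^d$.

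There is no substantial obstacle here: the only insight required is that the set of directions along which the projection of $R$ has finite $s$-th moment forms a linear subspace of $\R^d$. No appeal to the fixed-point equation, to the minorization condition, or to any of the transfer-operator machinery developed in the preceding sections is needed; the statement is a general fact about $\R^d$-valued random vectors.
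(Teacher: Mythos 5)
Your proof is correct, and it captures exactly the right structural idea: the set $V = \{u \in \R^d : \E|uR|^s < \infty\}$ is a linear subspace of $\R^d$ (closed under scalars because $|\lambda u R|^s = |\lambda|^s |uR|^s$, closed under addition because $(u+v)R = uR + vR$ and $(a+b)^s \le c_s(a^s + b^s)$), hence a closed set, so its complement is open. That is the whole content of the lemma.

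The paper proves the same fact but packages it as a dichotomy: either there is some open $U$ on which the moment is infinite (Case~1), or $V$ is dense (Case~2). Case~2 is ruled out by constructing a basis $y_1,\dots,y_d$ inside a dense $V$ and then invoking exactly the linearity estimate $|yR| \le \sum|c_i|\,|y_iR|$ to conclude $V=\R^d$, contradicting $x\notin V$. Note that the subspace structure of $V$ is used implicitly there. However, Case~1 as written only produces \emph{some} open set of infinite-moment directions, not one containing $x$; to finish from there one still has to go back to the observation that $V$ is a proper subspace, hence closed with empty interior, so $\R^d\setminus V$ is an open set containing $x$. Your version makes this single observation do all the work, and consequently is both shorter and airtight, whereas the paper's route is logically equivalent but reads as if Case~1 already ends the proof when it does not quite. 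In short: same key fact (linearity of $V$), but your exposition is the cleaner one.
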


\begin{proof}
\textsc{Case 1}: Suppose that there is an open $U \subset \R^d$ such that $\E
\abs{yR}^s=\infty$ for all $y \in U \setminus\{0\}$. Then of course $\E
\abs{\frac{y}{\abs{y}}R}^s = \infty$ and the image of $U \setminus \{0\}$ on the
sphere $S$ is open.

\textsc{Case 2}: If not, then
$$B:=\{ y \in \R^d: \ \E \abs{yR}^s < \infty \}, $$ is dense. But then there
is already a basis $y_1, \dots, y_d$ of $\R^d$ such that $\E\abs{y_iR}^s<\infty$
for all $1 \le i \le d$, and so $\E \abs{yR}^s < \infty$ for every $y \in \R^d$, but this is contradiction.
To construct such a basis just note that the complement of the linear hull
$\mathrm{lin}(y_1, \dots, y_k)$ is a hyperplane, so we can always find a
vector in $B$ which is independent of the previously chosen basis vectors.

\end{proof}

\section{Applications of the generalized smoothing transform}\label{sect:example}

\subsection{ Multivariate examples from kinetic gas theory}\label{sect:example_mathes}
{
\subsubsection*{Particle velocity in a homogeneous gas}
Consider the distribution $V \in \R^d$ of particle velocities in a homogeneous gas. If two particles collide, the interacting particles change their speeds $v_1$ and $v_2$ to the post-collision speeds $v'_1$ and $v'_2$ according to the formula
$$ v_1'=C_1^{(1)} v_1 + C_2^{(1)} v_2, \quad v_2'=C_1^{(2)} v_2 + C_2^{(2)}v_1,$$
where $(C_1^{(1)}, C_2^{(1)})$ and $(C_1^{(2)}, C_2^{(2)})$ are $d \times d$-matrices, transmitting the interaction and depending on the scattering angle. { To obtain a stochastic fixed point equation, we interpret the interacting particles as randomly picked, so we may assume that $v_1$ and $v_2$ are iid and distributed according to $V$, and independent of the scattering angle, say the matrices, which can also be assumed to be random.}
So if $(C_1, C_2)$ denotes a generic copy of $(C_1^{(1)}, C_2^{(1)})$, in the equilibrium the speed distribution $V$ would satisfy the stochastic fixed point equation
$$ V \eqdist C_1 V_1 + C_2 V_2,$$
with $V, V_1, V_2$ iid and independent of $(C_1, C_2)$.

\subsubsection*{A particular model: Maxwell-type gas}
Bassetti and Matthes \cite[Example 6.1]{Bassetti2011} study the following model for a Maxwell-type gas:
$$ C_1 = UY^\top Y, \qquad C_2=\mathrm{Id} - UY^\top Y ,$$
where $Y$ is a random unit row vector, modelling the line of collision and $U \in \R_*^+$ is a random variable modelling the inelasticity. They assume $\phi=\mathcal{L}(Y)$ to be the uniform distribution on $S$, and independent of $U$. Moreover, they assume that
$$ \E\big[U(1-U)\big]=0 \quad \text{ and } \E\big[U^2(1-U)^2\big]< \E\big[{\abs{\skalar{w_1,Y}}^4}\big] \E{U^2},$$
where $w_1$ denotes the unit vector $(1,0, \dots,0)$.
Let's finally assume that the distribution $\psi$ of $\log U$ is nonarithmetic.
Then we have that for all $x \in S$,
$$ \P{\esl{xC_1} \in \cdot, \log \abs{xC_1} \in \cdot} = \phi \otimes \psi,$$
and since $\mathcal{L}(C)=\frac{1}{2} \mathcal{L}(C_1) + \frac{1}{2}\mathcal{L}(C_2)$,
$$ \forall_{x \in S} \P{\esl{xC} \in \cdot, \log \abs{xC} \in \cdot} \ge \frac12 \phi \otimes \psi, $$
thus \eqref{minorization} holds. Since $\phi$ is the uniform distribution $l$ on $S$, also \eqref{notvanish} holds. The same would be true for any distribution $\phi$ having a density w.r.t. to $l$.

By the very definition of $C_1, C_2$, $\Sigma = \Id$ satisfies \eqref{covariance}.
Now lets consider $m$.
We have
\begin{align*}
m(2) = & 2 \E\big[ \sup_{x \in S} \abs{Cx}^2\big] =  \E\big[{ \sup_{x \in S} (\skalar{C_1x, C_1x} + \skalar{C_2x, C_2x}) }\big] =  \E\big[{\sup_{x \in S} x^\top (C_1^\top C_1 + C_2^\top C_2)x}\big] \\
\le  & \sup_{x \in S} x^\top \Id x - 2 \E\big[U(1-U)\big]\E\big[{\inf_{x \in S} x Y^\top Y x^\top}\big] = x \Id x^\top = \skalar{x,x} = 1.
\end{align*}
Moreover, it follows from \cite[6.1.2]{Bassetti2011}, that $m(4) < 1$. Thus $\alpha =2$, and the assumptions of Proposition \ref{EU:hom} are satisfied, giving a unique solution $V$.

In \cite[6.1.4]{Bassetti2011}, it is shown that $m(s) > 1$ for some $s >4$. This gives the existence of $\beta$ with $m(\beta)=1, m'(\beta)>0$ for some $\beta >4$. Thus our Theorem \ref{main theorem2} is applicable in this situation.

\subsubsection*{Precise tail estimates for the Maxwell-type gas.}

\begin{lemma}
In the situation above, $s_\infty = \sup \{s >0 \ : \ \E U^s < \infty \}$, and if $l$ denotes the uniform distribution on the sphere,
$$ e_s \equiv 1, \quad \nu_s = l \quad \forall s < s_\infty,$$
as well as
$$ m(s) = \Erw{U^s + \abs{w_1-UY}^s}.$$
\end{lemma}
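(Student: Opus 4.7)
The lemma splits into three assertions, and my plan is to prove all three by exploiting the rotational symmetry of $\mathcal L(C)$ (inherited from the uniform distribution of $Y$) together with the rank-one structure of $C_1$. Throughout, I will write $Y_1 = \langle w_1, Y\rangle$.

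To compute $s_\infty$, I will just evaluate the operator norms. Since $Y^\top Y$ is the orthogonal projection onto $\mathrm{span}(Y^\top)$, $\|C_1\| = U$; and $C_2 = \Id - UY^\top Y$ has eigenvalues $1$ on $\mathrm{span}(Y^\top)^\perp$ and $1-U$ on $\mathrm{span}(Y^\top)$, giving $\|C_2\| = \max(1,|1-U|) \le 1+U$. Since $\mathcal L(C)$ is the equal mixture of $\mathcal L(C_1)$ and $\mathcal L(C_2)$, $\E\|C\|^s < \infty \iff \E U^s < \infty$: the forward direction uses $\|C\| \ge U$ on a set of probability $\tfrac12$, and the reverse the bound on $\|C_2\|$.

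For $e_s \equiv 1$ and $\nu_s = l$ I rely on the fact that for any $x \in S$,
\[
|xC_1|^2 = U^2 \langle x, Y\rangle^2, \qquad |xC_2|^2 = 1 + U(U-2)\langle x, Y\rangle^2,
\]
both of which depend on $x$ only through $\langle x, Y\rangle^2$. Since $\langle x, Y\rangle$ has the same distribution for every $x \in S$ when $Y$ is uniform, $T_s 1(x) = \E|xC|^s$ is a constant function on $S$; by the uniqueness part of Proposition \ref{prop:properties es nus}, $e_s \equiv 1$ is the eigenfunction and $\kappa(s) = \E|xC|^s$ for any fixed $x \in S$. Dually, to show $\nu_s = l$, I plan to perform in $\int T_s f\, dl$ the change of variables $x \mapsto xR^\top$ ($R \in O(d)$) and then use the substitution $Y' = YR$ to recognise $(yR^\top C)R \eqdist yC$; this yields $\int T_s(f\circ R)\, dl = \int T_s f\, dl$, so $T_s^* l$ is $O(d)$-invariant, and uniqueness of the $O(d)$-invariant probability on $S$ forces $T_s^* l = \kappa(s) l$. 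The normalization $\int e_s\, d\nu_s = 1$ is automatic.

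For the formula for $m(s) = N\kappa(s) = 2\kappa(s)$, using $\kappa(s) = \E|xC|^s$ together with the mixture decomposition, I obtain
\[
m(s) = \E|xC_1|^s + \E|xC_2|^s
\]
for any $x \in S$, with $w_1 C_1 = U Y_1 Y$ and $w_1 C_2 = w_1 - U Y_1 Y$ at $x = w_1$. Matching the resulting expression to the claimed $\E[U^s + |w_1 - UY|^s]$ is the main technical obstacle: the evaluation at $x = w_1$ naively produces $\E[U^s|Y_1|^s + |w_1 - U Y_1 Y|^s]$, so to reach the stated closed form I will exploit the additional flexibility granted by $\nu_s = l$ -- representing $\kappa(s)$ as the integral of $\E|xC|^s$ against $l(dx)$ and then performing a rotational change of variables that either aligns $Y$ with $w_1$ or exchanges the roles of the two independent uniform vectors $x$ and $Y$. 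This final identification is the most delicate step of the proof.
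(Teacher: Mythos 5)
Your computations of $|xC_1|^2 = U^2\langle x, Y\rangle^2$ and $|xC_2|^2 = 1 + U(U-2)\langle x, Y\rangle^2$ are correct, and the observation that both depend on $x$ only through $\langle x, Y\rangle^2$ --- whose law is the same for every $x\in S$ since $Y$ is uniform --- does establish that $T_s\1[S]$ is constant, hence $e_s\equiv\1[S]$. The $O(d)$-equivariance change-of-variables argument you sketch for $\nu_s=l$ is also sound, and the $s_\infty$ argument via $\norm{C_1}=U$, $\norm{C_2}\le 1+U$ is fine. On these three points your reasoning is actually more careful than the paper's, whose proof of the lemma rests on the distributional identities $\mathcal{L}(xUY^\top Y)=\mathcal{L}(UY)$ and $\mathcal{L}(x-xUY^\top Y)=\mathcal{L}(w_1-UY)$, asserted for every fixed $x\in S$. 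Those identities do not hold: $xUY^\top Y = U\langle x,Y\rangle Y$, and the scalar factor $\langle x,Y\rangle$ does not disappear (e.g.\ $|xUY^\top Y|=U|\langle x,Y\rangle|\le U$ while $|UY|=U$). Your weaker but correct observation is the right (and minimal) extraction from the rotational symmetry.

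Your caution at the final step is therefore exactly the right call, and I would discourage you from trying to rescue the stated closed form by the averaging/alignment trick you propose: it cannot work. Since $e_s\equiv\1[S]$, we have $\kappa(s)=T_s\1[S](w_1)$, and so
$m(s)=2\kappa(s)=\E\bigl[U^s|\langle w_1,Y\rangle|^s + \bigl(1-(2U-U^2)\langle w_1,Y\rangle^2\bigr)^{s/2}\bigr]$,
which is precisely what your direct evaluation at $x=w_1$ yields; the extra $|\langle w_1,Y\rangle|$ and $\langle w_1,Y\rangle^2$ factors are genuinely there. The displayed formula $\E\bigl[U^s+|w_1-UY|^s\bigr]$ is not consistent with this: it gives $m(2)=\E U^2+\E\bigl[1-2U\langle w_1,Y\rangle+U^2\bigr]=1+2\E U^2>1$, contradicting the correct computation earlier in the same example that $m(2)=1-\tfrac{2}{d}\E[U(1-U)]\le 1$. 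So the obstacle you flag at the end of your proposal is genuine, but it is a defect in the lemma's stated formula (and in the paper's own derivation of it via the false distributional identities), not a gap in your method; the correct closed form is the one you derived with the $\langle w_1,Y\rangle$ factors retained.
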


\begin{proof}
We have $$\E \norm{C}^s = \frac12 \E \norm{C_1}^s + \frac12 \E \norm{C_2}^s \le \frac12 \E U^s + \frac12\big( 1 + \E U^s\big) = \frac12 + \E U^s ,$$
and also $E \norm{C}^s \ge \frac12 \E \norm{C_1}^s = \frac12 \E U^s $.

Since $Y$ has the uniform distribution $l$ on $S$,
we infer that for all $x \in S$
$$ \mathcal{L}(x - xUY^\top Y)=\mathcal{L}(w_1 - UY), \qquad \mathcal{L}(xUY^\top Y) = \mathcal{L}(UY), $$
thus for all $s < s_\infty$, $f \in C(S)$,
\begin{align*}
T_s f(x) = & \Erw{f(\esl{xC}) \abs{xC}^s } = \frac12 \Erw{f(\esl{xC_1}) \abs{xC_1}^s } + \frac12 \Erw{f(\esl{xC_2}) \abs{xC_2}^s } \\
= &  \frac12 \Erw{f(\esl{Y}) \abs{U}^s } + \frac12 \Erw{f(\esl{w_1-UY}) \abs{w_1-UY}^s }   .
\end{align*}
Since the RHS does not depend on $x$, $e_s=\1[S]$ and $\nu_s=l$ for all $s < s_\infty$.

The formula for $m(s)$ follows from Proposition \ref{prop:properties es nus}, using that $T_s \1[S] = \kappa(s)$ and $m(s)=2 \kappa(s)$.
\end{proof}

Given the distribution of $U$, $\beta$ is explicitly computable from this formula.

Finally, in this particular example, identity \ref{identity} reduces to
$$ \int_S \E \abs{yV}^s \nu_s(dy) = \E \abs{YV}^s = (1- m(s))^{-1} \Erw{\abs{Y(C_1 V_1 +C_2 V_2)}^s - \abs{YC_1V_1 }^s - \abs{YC_2 V_2}^s}, $$
with $V_1, V_2$ iid and distributed like the solution $V$. Thus the LHS is the Mellin transform of the random variable $\abs{YV}$, and our holomorphic argument works as in the situation of similarities. {Since by \cite[6.1.4]{Bassetti2011}, there exists $s < s_\infty$ such that $\E \abs{V}^s = \infty$, we deduce that necessarily $\E \abs{V}^\beta = \infty$.}

Thus an application of Theorem \ref{main theorem2} yields that
$$ \lim_{t \to \beta} t^\beta \P{xV >t} = K > 0 \qquad \forall x \in S.$$

\subsubsection{A second example with similarities}
In the same paper by Bassetti and Matthes, there is also an example that considers the multivariate smoothing transform with similarities, \cite[Example 6.2]{Bassetti2011}:
It is stated as follows: Let $A, B$ random matrices in $SO(\R^d)$, and $a,b$ be non-negative random variables, independent of $(A,B)$ and satisfying
$$ m(2)=\E \big[ a^2 + b^2\big] =1, \qquad m(s) = \E \big[a^s +b^s\big] < 1$$
for some $2 < s < 3$. Then they consider the homogeneous smoothing transform associated with
$$ C_1 = aA, \quad C_2 = bB.$$
One readily checks that, since $A,B$ are orthogonal,
$$ \E\big[ C_1 \Id C_1^\top + C_2 \Id C_2^\top\big] = \E\big[ a^2 AA^\top + b^2 BB^\top\big] = \E \big[ {a^2 +b^2} \Id\big] = \Id.$$
Thus by Proposition \ref{EU:hom} there is a unique solution $V$, and if there is $\beta>2$ such that $\E\big[ a^\beta +b^\beta\big]=1$, then by Theorem \ref{thm: similarities}, $V$ has heavy tails with tail index $\beta$.


}

\subsection{ The inhomogeneous smoothing transform with $\beta=2$}
Consider the inhomogeneous fixed point equation
\begin{equation} \tag{SFPE} R \eqdist \sum_{k=1}^N C_k R_k + Q .\end{equation}
Let $\E Q =0$, $0 <\Var\ Q < \infty$, and assume $\beta =2$. Then by Proposition \ref{EU:inhom}, there is a unique solution $R$ with $\E R =0$. Notice that this $R \equiv 0$ in the case of the associated homogeneous equation. Also, $R \equiv Q \equiv 0$ would be the only possible solution when only considering solutions on the positive cone as in \cite{BDG2011,Mirek2011a}. But in our model, $R$ is certainly nondegenerate due to the influence of $Q$.

This is a very nice example to study, because another feature, which is only possible in the multidimensional setting, occurs: If one would allow for noninvertible matrices $C$, and
$$ [\supp \ C] \cdot (\supp \ Q) = \{0\},$$
where $[\supp\ C]$ denotes the smallest closed semigroup generated by $\supp\ C$, and $(\supp\ Q)$ the linear hull of $\supp\ Q$; then the solution to \eqref{SFPE} is given by $R\eqdist Q$.

In our model, we only consider regular matrices, so this situation will not occur. Nevertheless, one may expect that properties of $R$ strongly depend on the \emph{interaction} between the distributions of $C$ and $Q$.
In the case $\beta =2$, this can be made explicit (by the way showing, that $K$ is positive in this case):

\begin{lemma}\label{lemma:beta2}
Consider the inhomogeneous SFPE \eqref{SFPE}. Let the assumptions of Theorem \ref{main theorem} hold, and assume  $\E Q = 0$, $\Var Q >0$ and $\beta=2$.
Then the unique solution $R$ with $\E R =0$ satisfies
$$ \lim_{t \to \infty} t^2 \P{xR > t}= \frac{e(x)}{4 l_2} \int_S \E(yQ)^2 \nu_2(dy)  >0,$$
for all $x \in S$.
\end{lemma}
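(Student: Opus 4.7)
The plan is to apply Proposition \ref{prop:implicit renewal} (the content of Theorem \ref{main theorem}) with $\beta = 2$ and then exploit the fact that, for the exponent $2$, the key integrand admits an elementary algebraic simplification. By Proposition \ref{prop:implicit renewal}, for every $x \in S$,
$$\lim_{t \to \infty} t^2 \P{xR > t} = \frac{e_2(x)}{4\, l_2}\int_S \E\!\left[\Bigl|\sum_{i=1}^N yC_iR_i + yQ\Bigr|^2 - \sum_{i=1}^N |yC_iR_i|^2 \right] \nu_2(dy).$$

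Since $yR$ is a scalar ($y$ being a unit vector in $\R^d$), the quadratic may be expanded directly:
$$\Bigl(\sum_{i=1}^N yC_iR_i + yQ\Bigr)^{2} = \sum_{i=1}^N (yC_iR_i)^2 + 2\!\!\sum_{1\le i<j\le N}\!\!(yC_iR_i)(yC_jR_j) + 2\sum_{i=1}^N (yC_iR_i)(yQ) + (yQ)^2.$$
The proof then reduces to showing that, upon taking expectations, every cross term vanishes. I will condition on $(C_1,\dots,C_N,Q)$: the random variables $R_1,\dots,R_N$ are i.i.d., independent of the weights, and each has mean zero (because $\alpha < \beta = 2$ allows us to apply Proposition \ref{EU:inhom} with $r=0$, forcing $\E R_i = \E R = 0$). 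Hence $\E\!\left[(yC_iR_i)(yC_jR_j)\mid C,Q\right] = (yC_i\,\E R_i)(yC_j\,\E R_j)=0$ for $i\ne j$, and similarly $\E\!\left[(yC_iR_i)(yQ)\mid C,Q\right]=(yC_i\,\E R_i)(yQ)=0$. The only surviving term is $(yQ)^2$, so
$$\E\!\left[\Bigl|\sum_{i=1}^N yC_iR_i + yQ\Bigr|^2 - \sum_{i=1}^N |yC_iR_i|^2\right] = \E(yQ)^2,$$
which yields the asserted formula for the limiting constant.

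It remains to verify $\int_S \E(yQ)^2\,\nu_2(dy) > 0$. Set $\Sigma_Q := \E[QQ^\top]$; since $\E Q = 0$ and $\Var Q > 0$, the symmetric positive semi-definite matrix $\Sigma_Q$ is nonzero, so the continuous function $y \mapsto \E(yQ)^2 = y\Sigma_Q y^\top$ is strictly positive on some nonempty open subset of $S$. By Proposition \ref{prop:properties es nus}, $\supp\nu_2 = S$, and hence the integral is strictly positive. Combined with the positivity of $e_2$ and $l_2$, this gives the claim together with the identification $K>0$, and in particular reproves Theorem \ref{main theorem} in this boundary regime without invoking the holomorphic argument of Section \ref{sect:positivity}.

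There is no serious obstacle: the only subtle point is checking that the moment conditions in Theorem \ref{main theorem} remain applicable (notably $\E|Q|^{s_\infty} < \infty$, which is an assumption absorbed by "the assumptions of Theorem \ref{main theorem} hold"), and that $\alpha < 2 = \beta$ is consistent with Proposition \ref{EU:inhom} so that $\E R_i = 0$ is well-defined. Once this is in place, the $\beta=2$ expansion makes the computation essentially automatic, providing the explicit formula together with its non-degeneracy.
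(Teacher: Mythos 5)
Your proof is correct and follows essentially the same route as the paper: apply Proposition \ref{prop:implicit renewal}, expand the square, kill the cross terms by conditioning on the weights and using $\E R = 0$, and conclude positivity from $\supp(\nu_2)=S$ (Lemma \ref{lemma:supp:nu}). The only cosmetic difference is that you condition on the entire vector $(C_1,\dots,C_N,Q)$ at once while the paper conditions termwise on $(C_i,C_j)$ and $(C_i,Q)$, and you are slightly more explicit that $y\mapsto y\Sigma_Q y^\top$ is continuous, nonnegative and not identically zero, hence positive on an open set — a small but welcome refinement of the paper's terse appeal to full support of $\nu_2$.
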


\begin{proof}
We calculate for $y \in S$, using the independence of $R_1, \dots, R_N$ and $(C_1, \dots, C_N,Q)$ as well as $\E R=0$ :
\begin{align*}
& \Erw{\abs{\sum_{i=1}^N yC_i R_i + yQ}^2 - \sum_{i=1}^N \abs{yC_iR_i}^2 }  \\
= & \Erw{\sum_{i \neq j}(yC_iR_i)(yC_jR_j)+\sum_{i=1}^N (yC_iR_i) yQ + (yQ)^2} \\
= & \sum_{i \neq j} \Erw{ \E\left[ \left. (yC_iR_i)(yC_jR_j) \right| C_i, C_j \right] } + \sum_{i =1}^N \Erw{ \E\left[ \left. (yC_iR_i)(yQ) \right| C_i, Q \right] } + \E (yQ)^2 \\
= & \sum_{i \neq j} \Erw{  (yC_i \E R)(yC_j \E R) } + \sum_{i =1}^N \Erw{ (yC_i \E R)(yQ)  } + \E (yQ)^2 \\
= &  \E(yQ)^2.
\end{align*}
Now by Proposition \ref{prop:implicit renewal}, the limit is given by $\frac{e(x)}{4 l_2} \int_S \E(yQ)^2 \nu_2(dy)$. It is shown in Lemma \ref{lemma:supp:nu}, that $\supp(\nu_2)=S$, thus the last integral is indeed positive.
\end{proof}

To interpret this formula, remember that $\nu_2$ is the invariant measure of the operator $T_2$ defined by
$$ T_2 f(x) = \Erw{\abs{xC}^2 f(\esl{xC})}.$$
So it is directly connected to the action of $C$.


\subsection{On contraction conditions}

In their survey article \cite{NR2006}, Neininger and R\"uschendorf introduce several probability metrics, namely Zolotarev and minimal $l_p$-metric, and discuss the resulting conditions for $S$ to be a contraction on $\M_2(0)$.

They list several conditions for $S$ to have a unique fixed point with zero expectation and finite variance, namely
\begin{enumerate}
  \item $\E \sum_{k=1}^N \norm{C_k}^2 <1$, which is the one we used and comes from the Zolotarev metric,
  \item $ \sum_{k=1}^N \E \norm{C_k^\top C_k} < 1$, which comes from the minimal $l_2$-metric, and
  \item $ \norm{\sum_{k=1}^N \E C_k^\top C} < 1$.
\end{enumerate}
The last one is the weakest. In the context of the contraction method (see there for details), they pose the question, which is the best, i.e. weakest condition possible. See \cite[Problem 3.2]{NR2006}.

We cannot resolve their question, but our example above gives a good hint, which may be the optimal condition: We suppose it is $m(2)<1$; for the example above shows, that if $m(2)=1$, the unique solution with zero expectation has infinite variance. In particular, if $m(2)=1$, there is no solution with finite variance.

\appendix

\section{Reduction to identically distributed weights}\label{sect:proofs}

\begin{proposition}\label{prop:permutation}
Let $\sigma$ be a r.v. with uniform distribution on the symmetric group of order
$N$, independent of all other occuring r.v.s. Then $R$ (with iid copies $R_1,
\dots, R_N$) is a solution to \eqref{SFPE} if and only if
\begin{equation}\label{SFPE:permutation}
R \eqdist \sum_{i=1}^N C_{\sigma(i)} R_i + Q ,
\end{equation}
i.e. $R$ solves the SFPE associated with the vector $(C_{\sigma(1)}, \dots,
C_{\sigma(N)},Q)$.

Moreover, $r \in \R^d$ satisfies \eqref{ev} for $(C_1, \dots, C_N,Q)$ if and
only if $r$ satisfies \eqref{ev} for $(C_{\sigma(1)}, \dots, C_{\sigma(N)},Q)$.
\end{proposition}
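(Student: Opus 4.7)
The approach rests on a single distributional equality: with $\sigma$ a uniform random permutation independent of everything else,
\[
\sum_{i=1}^N C_i R_i + Q \;\eqdist\; \sum_{i=1}^N C_{\sigma(i)} R_i + Q.
\]
Once this is established, both directions of the "iff" follow immediately, since a random variable $R$ is a solution to one of the two fixed point equations exactly when its distribution equals the law of the common right-hand side.

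My plan is to prove this distributional identity by conditioning and reindexing. First, substitute $j = \sigma(i)$ pointwise to rewrite
\[
\sum_{i=1}^N C_{\sigma(i)} R_i \;=\; \sum_{j=1}^N C_j R_{\sigma^{-1}(j)}.
\]
Next, condition on $(\sigma, C_1, \dots, C_N, Q)$. Since $R_1, \dots, R_N$ are iid and independent of this sigma-algebra, the vector $(R_{\sigma^{-1}(1)}, \dots, R_{\sigma^{-1}(N)})$ has, conditionally, the same joint law as $(R_1, \dots, R_N)$. Therefore the conditional law of $\sum_j C_j R_{\sigma^{-1}(j)} + Q$ equals the conditional law of $\sum_j C_j R_j + Q$. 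Integrating out the conditioning variables yields the claimed equality of (unconditional) distributions. The chief thing to be careful about is keeping track of what is being conditioned on, but no real obstacle arises — the proof is essentially a bookkeeping argument exploiting exchangeability of the $R_i$.

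For the second assertion concerning \eqref{ev}, observe that for every outcome $\omega$, the permutation $\sigma(\omega)$ merely reorders the finite collection $C_1(\omega), \dots, C_N(\omega)$, hence
\[
\sum_{i=1}^N C_{\sigma(i)} \;=\; \sum_{i=1}^N C_i \quad\text{pointwise,}
\]
so taking expectations, $N \, \E C_I^{\text{perm}} = \sum_i \E C_{\sigma(i)} = \sum_i \E C_i = N\, \E C_I$. Consequently the identity $r = N\,\E C\, r + \E Q$ is literally the same equation for both weight vectors, proving the equivalence. This step is essentially trivial and requires no further argument.
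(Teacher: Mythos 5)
Your proof is correct and takes essentially the same approach as the paper: both establish the distributional identity $\sum_i C_{\sigma(i)} R_i + Q \eqdist \sum_i C_i R_i + Q$ by conditioning on $(\sigma, C_1, \dots, C_N, Q)$ and using exchangeability of the iid vector $(R_1, \dots, R_N)$; your reindexing via $\sigma^{-1}$ versus the paper's via $\sigma$ is an inconsequential variant, and the pointwise-permutation argument for the eigenvalue condition matches the paper's (terser) "check by taking expectations."
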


\begin{proof}
Let $R_1, \dots, R_N$ be iid, independent of $(C_1, \dots,C_N,Q)$ and
$\sigma$ as defined above. Then for all $f \in
C_b(\R^d)$,
\begin{eqnarray*}
 \E f\left( \sum_{k=1}^N C_k R_k + Q \right)
&= & \E f \left( \sum_{k=1}^N C_{\sigma(k)} R_{\sigma(k)} + Q \right) \\
&= &\E \left[  \E \left( \left. f \left( \sum_{k=1}^N C_{\sigma(k)} R_{\sigma(k)}
+ Q \right) \right| (C_1,\dots,C_N,Q),\sigma \right] \right) \\
&= &\E \left[  \E \left( \left. f \left( \sum_{k=1}^N C_{\sigma(k)} R_{k}
+ Q \right) \right| (C_1,\dots,C_N,Q),\sigma \right] \right) \\
&= & \E \left[  f \left( \sum_{k=1}^N C_{\sigma(k)} R_{k}
+ Q \right) \right].
\end{eqnarray*}
Now the distribution of $R$ is a solution to the original equation, if $\E
f(R) $ equals the first line, and it is a solution to the \emph{permuted}
equation, if $\E
f(R) $ equals the last line; so both equations are indeed equivalent.

The statement about the eigenvalues can be readily checked by taking
expectations in both equations.
\end{proof}

\begin{remark}Proposition \ref{prop:permutation} shows that we may
w.l.o.g. assume that $C_1, \dots, C_N$ are identically distributed (but dependent) with generic
copy $C$.
\end{remark}

\section{Existence and Uniqueness of Solutions}\label{sect:EU}

The aim of this section is to show existence and uniqueness of solutions to \eqref{SFPE} and \eqref{SFPE'}, given by Propositions
\ref{EU:hom} and \ref{EU:inhom}.  For this purpose we consider the mapping $\S$ as a contraction operator on an appropriate complete metric space consisting of probability measures. Then both existence and uniqueness of solutions follow from the Banach fixed point theorem.

\subsection{The Zolotarev metric}
The right metric on measures for our purpose is the Zolotarev metric $\zeta_s$, introduced by Zolotarev \cite{Zolotarev1976}, and { considered} in this context by Rachev and R\"uschendorff \cite{RR1995}.

For $X,Y$ two random variables on a common probability space, we define
\begin{equation} \label{Zolotarev metric1}
\zeta_s (Z,Y) := \sup \{ \abs{ \Erw{f(X)- f(Y)}} \ : \ f \in \mathfrak{D}_s\},
\end{equation}
where $\mathfrak{D}_s$ is the space of functions defined as follows
\begin{equation}
\label{Zolotarev Ds2} \mathfrak{D}_s = \{ f \in C^k(\R^d) \ : \ \forall_{x,y \in \R^d}\ \abs{f^{(k)}(x) - f^{(k)}(y)} \le C \abs{x-y}^{s-k}\} \qquad \text{ with } k=\lceil s-1 \rceil,
\end{equation}
and
  $\lceil \alpha \rceil$ denotes the smallest integer $\ge\alpha$.
  Observe that for $0 < s \le 1$, $\mathfrak{D}_s$ is the set of $s$-H\"older functions on $\R^d$.

 For properties of the Zolotarev
metric $\zeta_s$ see \cite[Section 2]{NR2004}; or for a recent treatment of the Zolotarev metric on general separable Hilbert spaces see \cite{Drmota2008}.  It turns out that on an appropriately defined measure space the Zolotarev metric is complete and the mapping $\S$ is a contraction. The details are as follows.

{ Let ${\bf{a}}=(a_1, \dots, a_d) \in \N_0^d$ be a $d$-dimensional multi-index, and set $\abs{\bf{a}}=a_1 + \dots + a_d$. For a probability measure $\nu$ on $\R^d$, denote by $M_k(\nu)$ the sequence of mixed integer moments up to order $k$, i.e.
$$ M_k(\nu) = \left( \int_{\R^d} x_1^{a_1} \cdots x_d^{a_d} \, d\nu(x_1, \dots, x_d) \right)_{\abs{\bf{a}} \le k} . $$}

 For a fixed mixed moment sequence $M_k$, we introduce the following subsets of the space of probability measures on $\R^d$ (as before, $k=\lceil s-1 \rceil$):
\begin{equation}
\mathfrak{M}^d_s(M_k) \ := \left\{ \nu \ : \ \int_{\R^d} \abs{x}^s \nu(dx) < \infty, \ M_k(\nu)=M_k \right\} .
\end{equation}
Thus, we consider  subspaces of measures such that we fix integer moments, i.e. the expectation if $1 < s \le 2$, and also the covariance matrix if $2 < s \le 3$, and so on.

Then we have the following Lemma:
\begin{lem}\label{complete metric space proposition}
The metric spaces $(\M_s^d(M_k), \zeta_s)$ are complete for $s>0$. Moreover,  if $m(s)<1$, then for some $n_0$ the operator $\S^{n_0}$ is a contraction on the space $(\M_s^d(M_k), \zeta_s)$.
\end{lem}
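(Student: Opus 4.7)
My plan is to split the argument into two parts. For completeness, I would first verify that $\zeta_s$ is a finite metric on $\M^d_s(M_k)$: monomials of degree $\le k$ lie in $\mathfrak{D}_s$ (their $k$-th derivative is constant), so finiteness of $\zeta_s$ forces agreement on mixed integer moments up to order $k$; conversely, the standard Zolotarev estimate $\zeta_s(\mu,\nu) \le c\bigl(\int |x|^s \mu(dx) + \int |x|^s \nu(dx)\bigr)$ under matched integer moments (see \cite[Section 2]{NR2004}) makes $\zeta_s$ finite on $\M^d_s(M_k)$. Completeness itself is a classical result for Zolotarev-type ideal metrics (\cite[Section 2]{NR2004}): a $\zeta_s$-Cauchy sequence $(\mu_n)$ has uniformly bounded $s$-th moments, hence is tight; one extracts a weak limit, uses uniform integrability of monomials of degree $\le k$ to preserve the prescribed moments $M_k$, and deduces $\zeta_s$-convergence from weak convergence plus the uniform $s$-moment tail control.

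For contractivity, the engine is the following matrix-variant ideality of $\zeta_s$: for a deterministic matrix $A$,
\begin{equation*}
\zeta_s(AX, AY) \le \norm{A}^s\, \zeta_s(X, Y),
\end{equation*}
combined with the translation-regularity $\zeta_s(X + Z, Y + Z) \le \zeta_s(X, Y)$ whenever $Z$ is independent of $(X, Y)$. The matrix-homogeneity follows from the chain rule: for $f \in \mathfrak{D}_s$ with $k = \lceil s-1 \rceil$ and $g(x) := f(Ax)$, one has $g^{(k)}(x)[h_1, \dots, h_k] = f^{(k)}(Ax)[Ah_1, \dots, Ah_k]$, whence $|g^{(k)}(x) - g^{(k)}(y)| \le \norm{A}^k \cdot \norm{A}^{s-k} |x - y|^{s-k}$, i.e.\ $\norm{A}^{-s} g \in \mathfrak{D}_s$. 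With these two ingredients, let $X_i \sim \mu$ and $Y_i \sim \nu$ be iid copies independent of $(C_1, \dots, C_N, Q)$, condition on $(C_1, \dots, C_N, Q) = (c_1, \dots, c_N, q)$, and telescope by swapping $c_i X_i \rightsquigarrow c_i Y_i$ one index at a time (valid because the remaining sum is independent of the replaced pair) to obtain $\zeta_s\bigl(\sum_i c_i X_i + q, \sum_i c_i Y_i + q\bigr) \le \sum_i \norm{c_i}^s \zeta_s(\mu, \nu)$. Integrating over the weights and using identical distribution of the $C_i$ yields the single-step bound $\zeta_s(\S\mu, \S\nu) \le N\, \E\norm{C}^s\, \zeta_s(\mu, \nu)$.

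This bound already gives contractivity when $N \E \norm{C}^s < 1$, but the hypothesis $m(s) < 1$ is weaker. To exploit it, I would apply the same conditioning-plus-telescoping argument directly to the iterated smoothing operator realized through the weighted branching process of Section \ref{sect:WBP}: writing $\S^n \mu = \mathcal{L}(Y_n)$ with $Y_n = \sum_{|v|=n} L(v) X(v) + \sum_{k < n}\sum_{|v|=k} L(v) Q(v)$ and iid $X(v) \sim \mu$, conditioning on all weights $\{L(v), Q(v)\}$ and telescoping across the $N^n$ leaves at depth $n$ yields
\begin{equation*}
\zeta_s(\S^n \mu, \S^n \nu) \le \sum_{|v|=n} \E \norm{L(v)}^s\, \zeta_s(\mu, \nu) = N^n\, \E \norm{\Pi_n}^s\, \zeta_s(\mu, \nu),
\end{equation*}
since $L(v) \eqdist \Pi_n$ for every leaf of depth $n$. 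By definition of $m$, $(N^n \E \norm{\Pi_n}^s)^{1/n} \to m(s) < 1$, so there exists $n_0$ with $N^{n_0} \E \norm{\Pi_{n_0}}^s < 1$, which is the claimed contractivity. A final sanity check is that $\S$ preserves the prescribed integer moments $M_k$: taking expectation --- and, if $s > 2$, the covariance --- of both sides of \eqref{SFPE} and invoking \eqref{ev} and \eqref{covariance} shows that $\S$ sends $\M^d_s(M_k)$ to itself.

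The main obstacle is the matrix-homogeneity displayed above: scalar homogeneity of $\zeta_s$ is textbook, but the matrix version relies on the full Fr\'echet-differentiable formulation of $\mathfrak{D}_s$ and a careful chain-rule expansion on $k$-multilinear forms. Once this is in place, the telescoping across the $N$ summands at one level and then across the $N^n$ leaves of the tree is essentially mechanical, and the convergence $(N^n \E \norm{\Pi_n}^s)^{1/n} \to m(s)$ converts the spectral-radius-type hypothesis $m(s)<1$ directly into the required contraction constant.
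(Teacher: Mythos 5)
Your proof is correct and follows the same overall strategy as the paper's, but it supplies one step that the paper leaves implicit. For completeness, you and the paper both defer the crux (weak convergence plus uniform $s$-moment control $\Rightarrow$ $\zeta_s$-convergence) to the literature; the paper passes through Prokhorov--Cauchyness via \cite{Zolotarev1976} and \cite{Drmota2008}, you via tightness from bounded $s$-moments --- a cosmetic difference. For contractivity, the paper simply cites the single-step bound $\zeta_s(\S\mu,\S\nu)\le N\,\E\norm{C}^s\,\zeta_s(\mu,\nu)$ from \cite[Lemma~3.1]{NR2004} and asserts the result ``follows directly''; but iterating that inequality only gives the constant $\bigl(N\,\E\norm{C}^s\bigr)^{n_0}$, which dominates $m(s)^{n_0}$ and need not be small under the weaker hypothesis $m(s)<1$. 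Your explicit application of the conditioning-and-telescoping argument to the weighted branching process at depth $n_0$ --- producing $N^{n_0}\,\E\norm{\Pi_{n_0}}^s$ rather than the iterated one-step constant --- is precisely the missing observation, and is the right way to convert the spectral-radius-type hypothesis $m(s)<1$ into a contraction constant for $\S^{n_0}$. Re-deriving the matrix-ideality of $\zeta_s$ from the chain rule rather than citing \cite{NR2004} is a welcome but inessential elaboration.
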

\begin{proof} The first part of the Lemma was indeed proved in \cite[Theorem 5.1]{Drmota2008}. However for  reader's convenience we give some ideas of the proof.

Let $\nu_n$ be a Cauchy sequence in  $\M^d_s(M_k)$ with respect to the Zolotarev metric. By \cite[Theorem
5]{Zolotarev1976}, it is also a Cauchy sequence in the Prokhorov metric, thus
weakly convergent towards a probability measure $\nu$ - observe that characteristic functions belong to $\mathfrak{D}_s$ for any $s>0$. By \cite[Theorem
6]{Zolotarev1976}, the sequence of $\int \norm{x}^s \nu_n(dx)$ is also a (real)
Cauchy sequence, thus bounded, implying $\int \norm{x}^s \nu(dx) < \infty$.
For $s >1$, this also yields uniform integrability of the moments of order up to s, thus $\nu$ again is in
$\M^d_s(M_k)$.
It remains to prove that $\zeta_s(\nu_n, \nu) \to 0$, for which we refer to \cite[Lemma 5.6]{Drmota2008}.

\medskip

The second part follows directly from  the inequality proved in \cite[Lemma
3.1]{NR2004}:
$$
 \zeta_s(\S\nu, \S\eta) \le \left( \E
\sum_{i=1}^N \norm{C_i}^s \right) \zeta_s(\nu, \eta) = N \E \norm{C}^s
\zeta_s(\nu, \eta).$$
\end{proof}


\subsection{Proof of Existence and Uniqueness}
Now we are ready to prove both Propositions. 

\begin{proof}[Proof of Propositions \ref{EU:hom} and \ref{EU:inhom}]

If $m(s)<1$ for $s\le 2$, the mapping $\S^{n_0}$ is a contraction on { $\M_s^d$, $\M_s^d(r)$ resp. $\M_s^d(r, \Sigma)$ }which is a complete metric space (Lemma \ref{complete metric space proposition}). Therefore by the Banach fixed point theorem  there exists a unique fixed point $R$ of $\S$, being a unique solution of \eqref{SFPE}, resp. \eqref{SFPE'}.
Notice that for { $1 < s \le 2$ $(2 < s \le 3)$}, we have to fix the expectation {(and the covariance matrix)} in order for $\S$ to be a
contraction. Moreover, to ensure that $\S$ maps $\M_s^d(r)$ into $\M_s^d(r)$ { ($\M_s^d(r, \Sigma)$ into $\M_s^d(r, \Sigma)$),
we have to know that $r$ satisfies  condition \eqref{ev} ($\Sigma$ satisfies \eqref{covariance} for the homogeneous equation).}
Observe further that in the homogeneous case $R=0$ is a trivial solution of \eqref{SFPE'}. Since for $s\le 1$, $\d_0$ is an element of    $\M_s^d$, there are no other solutions of this equation. Thus \eqref{SFPE'} possesses nontrivial solutions with finite expectation only if $\a\ge 1$ and thus $s>1$.

\medskip

To prove the second part of both Propositions, observe  we already know that
all moments of $R$ up to some $s_0 >
\alpha$ are finite. By convexity of $m$, $m(s)<1$ for all $s \in
(s_0,\min\{\beta, s_\infty\})$. Then arguing as above, Lemma
\ref{complete metric space proposition} assures the existence of a solution with
finite $s$-moments, which by uniqueness coincides with $R$.

If $s >2$, then $\S$ is a contraction only on subspaces of measures with
fixed mixed integer moments of higher order; and we have to check whether $\S$ is still a self-mapping of { these} subspaces. But this is no problem, since Lemma \ref{complete metric space proposition} applied to $p=\lfloor s \rfloor$, (here and subsequently $\lfloor \alpha \rfloor$ denotes the biggest integer $\le\alpha$) gives finiteness of all mixed moments of $R$ up to order $p$, and since $R$ is a solution, then also $\S(M_p(R))=M_p(R)$. Starting with $p=2$, we can proceed in the same manner at every integer step. 
\end{proof}
%
%


\section{Proofs of Lemmas \ref{ew6.1} and \ref{lemma:dri}}
\label{app:lemmas}

\begin{proof}[Proof of Lemma \ref{ew6.1}]
We estimate separately
\begin{equation}
\label{e1}
\big| f(x_1+\ldots + x_n + q) - f(x_1+\ldots+ x_n)
\big| \le c |q|^{\eps}
\end{equation}
and
\begin{equation}
\label{e2}
\bigg| f(x_1+\ldots + x_n ) - \sum_{j=1}^n f(x_j)
\bigg| \le c \sum_{i\not=j}|x_i|^{\eps}|x_j|^{\eps}.
\end{equation}
On one hand \eqref{e1} is bounded and on the other it is dominated by $c|q|$. Hence
$$
\big| f(x_1+\ldots + x_n + q) - f(x_1+\ldots+ x_n)
\big| \le c(f) |q|^{\eps}.
$$
For \eqref{e2}, we first prove
\begin{equation}
\label{e3}
|f(x_1+x_2) -  f(x_1) - f(x_2) | \le c(f)|x_1|^{\eps}|x_2|^{\eps}
\end{equation}
and then we proceed by induction as follows
\begin{multline*}
\bigg| f\big( x_1+\ldots + x_n \big) - \sum_{j=1}^n f(x_j) \bigg|\\ \le
\big|  f\big( x_1+\ldots + x_n \big) - f(x_1) - f\big( x_2+\ldots + x_n  \big) \big|
+ \bigg| f\big( x_2+\ldots + x_n  \big) - \sum_{j=2}^n f(x_j) \bigg|\\
\le c|x_1|^{\eps}|x_2+\ldots+ x_n|^{\eps} + { c \sum_{i\not= j; i,j \ge 2}} |x_i|^{\eps}|x_j|^{\eps} \le  c \sum_{i\not= j} |x_i|^{\eps}|x_j|^{\eps}.
\end{multline*}
For \eqref{e3} we write $u=(u_1,\ldots,u_d), y=(y_1,\ldots,y_d)\in \R^d$ and
\begin{eqnarray*}
f(u+y) - f(u) &=& \int_0^1 \frac d{ds} f(u+sy)ds =  \int_0^1 \sum_{j=1}^d \partial_j  f(u+sy)y_j ds,\\
f(y) &=& f(y) - f(0) = \int_0^1 \sum_{j=1}^d \partial_j  f(sy)y_j ds.
\end{eqnarray*}
Now notice that
$$
\partial_j f(u+sy) - \partial_j f(sy) = \int_0^1 \sum_{k=1}^d\partial_k \partial_j  f(ru+sy)y_j u_k dr.
$$
Hence
$$
|f(u+y) - f(u) - f(y)| \le \int_0^1\int_0^1 \sum_{j,k=1}^d \big|\partial_k\partial_j  f(ru+sy)\big| |u_k||y_j| ds dr \le C |u||y|.
$$
On the other hand $|f(u+y) - f(u) - f(y)|$ is bounded, so \eqref{e3} follows.
\end{proof}

{  In several places, we will use the inequality
\begin{equation} \label{ineq1}
\left( \sum_{i=1}^n y_i \right)^s \le n^{0 \vee (s-1)} \sum_{i=1}^n y_i^s,\end{equation}
valid for $y_i \ge 0$ and $s \ge 0$. For $s \le 1$, this follows from the subadditivity of $x \mapsto x^s$. 
For $s>1$, it follows from Jensen inequality, when applied to the discrete probability measure $\frac{1}{n} \sum_{i=1}^n \delta_{y_i}$ and the convex function $x \mapsto x^s$.

}

\begin{proof}[Proof of Lemma \ref{lemma:dri}]
Writing $g=tk$, $t\in \R^+$, $k\in K$ we have to show that
$$
\sum_{n=-\8}^{\8} \sup_{e^n\le t\le e^{n+1}; k\in K}  |\psi_f(tk)| <\8.
$$
Assuming ${\rm supp}f\subset \R^d\setminus B_{\eta}(0)$ and $e^n< t\le e^{n+1}$
 by Lemma \ref{ew6.1}
 we have
\begin{eqnarray*}
|\psi_f(tk)|&\le& t^{-\b} \E\bigg| f\bigg(tk\bigg( \sum_{i=1}^N C_i R_i + Q\bigg)\bigg)
-\sum_{i=1}^N f(tk C_i R_i)\bigg|\\
&\le& c t^{-\b}\E\bigg[\bigg(
\sum_{i\not= j} |tkC_iR_i|^{\eps} |tk C_j R_j|^{\eps} + |tkQ|^{\eps}\bigg) \cdot {\bf 1}_{\{\sum|C_iR_i| +|Q| > \eta t^{-1}\}}
\bigg]\\
&\le& c e^{-(\b-2\eps)n}\sum_{i\not= j }\E \big[ |C_i R_i|^{\eps}|C_j R_j|^{\eps}{\bf 1}_{\{\sum|C_iR_i| +|Q| > \eta e^{-n-1}\}}\big]\\
&&+ c e^{-(\b-\eps)n}\E\big[ |Q|^{\eps} {\bf 1}_{\{\sum|C_iR_i| +|Q| > \eta e^{-n-1}\}}\big].
\end{eqnarray*}
Denote the first expression above by $I(n)$ and the second one by $II(n)$.
Let the random variable $n_0$ be defined as $n_0 = \lceil \log\eta - 1 -\log(\sum_j|C_iR_i|+|Q|)\rceil$. { The value of a constant $c_{\bullet}$ may change from line to line, but is finite and depends only on the indicated variables.} Then
{ \begin{eqnarray*}
\sum_n II(n) &=& c \sum_n e^{-(\b-\eps)n}\E |Q|^{\eps} {\bf 1}_{\{\sum|C_iR_i| +|Q| > \eta e^{-n-1}\}}\\
 &\le& c\E \bigg[|Q|^{\eps} \cdot \sum_{n\ge n_0}e^{-(\b-\eps)n} \bigg]
 \le c \E \bigg[|Q|^{\eps} e^{-(\b-\eps)n_0} \bigg]\\
 &\le& c_\eta \E \bigg[ \bigg( \sum_{i=1}^N |C_iR_i| + |Q|
 \bigg)^{\b-\eps} |Q|^{\eps}  \bigg]\\
 &\le& c_\eta N^{0 \vee (\beta - \epsilon - 1)}\bigg( \sum_{i=1}^N \E \big[ \|C_i\|^{\b-\eps}|R_i|^{\b-\eps}|Q|^\eps \big] + \E|Q|^\b \bigg),
\end{eqnarray*} where the last line follows by an application of \eqref{ineq1}. }The last expression is finite since
$$
\E \big[ \|C_i\|^{\b-\eps}|R_i|^{\b-\eps}|Q|^\eps \big]
= \E \big[|R_i|^{\b-\eps}\big]\E\big[ \|C_i\|^{\b-\eps}|Q|^\eps \big]
\le \E \big[|R_i|^{\b-\eps}\big]  \Big(\E \|C_i\|^{\b}\Big)^{\frac {\b-\eps}\b}
\Big(\E |Q|^{\b}\Big)^{\frac {\eps}\b}.
$$
Next, recalling that  $i\not=j$ and $C_i$ are identically distributed  we write
\begin{eqnarray*}
\sum_n I(n)&\le& cN^2\sum_n e^{-(\b-2\eps)n} \E \big[ |C_1 R_1|^{\eps}|C_2 R_2|^{\eps}{\bf 1}_{\{\sum|C_1R_1| +|Q| > \eta e^{-n-1}\}}\big] \\
&\le& c_N\E\bigg[ \sum_{n\ge n_0} e^{-(\b-2\eps)n}   |C_1 R_1|^{\eps}|C_2 R_2|^{\eps} \bigg]\\
&\le& c_N \E\Big[ e^{-(\b-2\eps)n_0}   |C_1 R_1|^{\eps}|C_2 R_2|^{\eps} \Big]\\
&\le& c_{N,\eta} \E \bigg[\bigg(\sum_{k=1}^N |C_k R_k| + |Q|\bigg)^{\b-2\eps} |C_1 R_1|^{\eps} |C_2 R_2|^{\eps}
\bigg]\\
&\le& { c_{N,\eta} N^{0 \vee (\beta - 2 \epsilon -1)}  \E \bigg[\bigg(\sum_{k=1}^N |C_k R_k|^{\b-2\eps} + |Q|^{\b-2\eps}\bigg) |C_1 R_1|^{\eps} |C_2 R_2|^{\eps}
\bigg]}\\ 
&\le& { c_{N,\eta,\b} \E \bigg[\sum_{k=1}^N |C_k R_k|^{\b-2\eps}  |C_1 R_1|^{\eps} |C_2 R_2|^{\eps}\bigg] + \E \bigg[ |Q|^{\b-2\eps} |C_1 R_1|^{\eps} |C_2 R_2|^{\eps}  \bigg] }\\ 
&\le& { c_{N,\eta,\b}\Big[
 \Big( \E |R|^{\eps} \Big)^2 \Big( \E |Q|^\b\Big)^{\frac{\b-2\eps}{\b}} \Big( \E \|C_1\|^\b\Big)^{\frac{2\eps}{\b}} } \\
 & & {
+  \sum_{k=1}^N \Big(\E\|C_k\|^\b\Big)^{\frac{\b-2\eps}{\epsilon}} \Big( \E \|C_1\|^\b\Big)^{\frac{2\eps}{\b}}
 \cdot \E\big[ |R_k|^{\b-2\eps} |R_1|^{\eps}|R_2|^{\eps}\big]\Big],}
\end{eqnarray*}
{ where we used again \eqref{ineq1}, and the general H\"older inequality in the last line.}
Only finiteness of the last term requires some proof. If $k\notin\{1,2\}$, then
$$
\E\big[ |R_k|^{\b-2\eps}|R_1|^{\eps}|R_2|^{\eps} \big] =
\E |R_k|^{\b-2\eps}\E |R_1|^{\eps}\E |R_2|^{\eps},
$$
otherwise
$$\E\big[ |R_k|^{\b-2\eps}|R_1|^{\eps}|R_2|^{\eps} \big] =
\E |R_1|^{\b-\eps}\E |R_2|^{\eps}, $$
and both expressions are finite.
\end{proof}

\section{Proofs of Propositions \ref{prop:Goldie lemma} and \ref{extension of moments}}
\label{app:proofs}

To prove Proposition \ref{prop:Goldie lemma} we need the following lemma:


\begin{lemma}\label{lemma2}
For all $0<s<s_{\8}$, $$s\int_0^\infty t^{s-1} \abs{ \P{\max_i \abs{yC_iR_i} >t}  -
N\P{\abs{yCR} >t} } dt \le \eqref{E2}.$$ If \eqref{E2} is finite, then
\begin{equation}
z\int_0^\infty t^{z-1} \left( \P{\max_i \abs{yC_iR_i} >t}  - N\P{\abs{yCR} >t}
\right) dt = \Erw{\abs{\max_i yC_iR_i}^z- \sum_{i=1}^N
\abs{yC_iR_i}^z },
\end{equation}
for all $z\in\mathbb{C}$ such that $0 < \Re z < s$.
\end{lemma}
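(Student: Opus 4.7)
\textbf{Proof plan for Lemma \ref{lemma2}.} The strategy is the standard layer–cake identity $X^z = z\int_0^\infty t^{z-1}\1_{\{X>t\}}\,dt$ (valid for $X\ge 0$, $\Re z>0$) combined with Tonelli / Fubini. The key first move is to rewrite both tail probabilities as expectations of indicators so that the integrand becomes manifestly one‐signed. Since $C_1,\dots,C_N$ are identically distributed with generic copy $C$,
\[
N\,\P{|yCR|>t} \;=\; \sum_{i=1}^N \P{|yC_iR_i|>t} \;=\; \E\!\left[\sum_{i=1}^N \1_{\{|yC_iR_i|>t\}}\right],
\]
while $\P{\max_i|yC_iR_i|>t} = \E[\max_i \1_{\{|yC_iR_i|>t\}}]$. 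Hence
\[
h_y(t) \;:=\; N\,\P{|yCR|>t} - \P{\max_i|yC_iR_i|>t} \;=\; \E\!\left[\sum_i \1_{\{|yC_iR_i|>t\}} - \max_i \1_{\{|yC_iR_i|>t\}}\right] \;\ge\; 0,
\]
because each summand dominates the maximum.

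Next I would establish the first, real–$s$ bound by a direct application of Tonelli, since $h_y(t)\ge 0$:
\[
s\!\int_0^\infty\! t^{s-1}\,|h_y(t)|\,dt \;=\; \E\!\left[\,s\!\int_0^\infty\! t^{s-1}\!\!\left(\sum_i \1_{\{|yC_iR_i|>t\}} - \max_i \1_{\{|yC_iR_i|>t\}}\right)\!dt\right] \;=\; \E\!\left[\sum_i |yC_iR_i|^s - \max_i |yC_iR_i|^s\right],
\]
where the inner $t$–integral is computed via $\int_0^\infty s t^{s-1}\1_{\{X>t\}}dt = X^s$. Taking the sup over $y\in S$ bounds this by \eqref{E2}.

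For the second claim with complex $z=u+iv$, $0<u<s$, note $|t^{z-1}|=t^{u-1}$, so the integrability already proved (with $s$ replaced by $u<s$, using monotonicity of $\E|yC_iR_i|^u-\E\max_i|yC_iR_i|^u$ in $u$, or just the finiteness of \eqref{E2}) justifies Fubini. Then
\begin{align*}
z\!\int_0^\infty\! t^{z-1}\!\left(\P{\max_i|yC_iR_i|>t} - N\P{|yCR|>t}\right)dt
&= -\,\E\!\left[z\!\int_0^\infty\! t^{z-1}\!\!\left(\sum_i \1_{\{|yC_iR_i|>t\}} - \max_i \1_{\{|yC_iR_i|>t\}}\right)dt\right] \\
&= -\,\E\!\left[\sum_i |yC_iR_i|^z - (\max_i |yC_iR_i|)^z\right] \\
&= \E\!\left[(\max_i |yC_iR_i|)^z - \sum_{i=1}^N |yC_iR_i|^z\right],
\end{align*}
using $z\int_0^\infty t^{z-1}\1_{\{X>t\}}dt = X^z$ for $\Re z>0$.

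The argument is essentially routine; the only point that deserves care is the justification of Fubini for complex $z$, which is handled by the first, real‐$s$ bound applied at $u=\Re z$. I would also briefly remark that the expression $|\max_i yC_iR_i|^z$ appearing in the statement is to be read as $\max_i |yC_iR_i|^z$, consistent with the LHS of the identity and with its use in \eqref{E2}.
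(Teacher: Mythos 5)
Your proof follows essentially the same route as the paper's: both rewrite $N\,\P{\abs{yCR}>t}$ as $\sum_{i=1}^N\P{\abs{yC_iR_i}>t}$, observe that the resulting integrand $\sum_i\1[\{\abs{yC_iR_i}>t\}]-\1[\{\max_i\abs{yC_iR_i}>t\}]$ is nonnegative, and combine the layer-cake identity with Tonelli (real $s$) and Fubini (complex $z$). One parenthetical aside is off, though: the quantity $\E\big[\sum_i\abs{yC_iR_i}^u-\max_i\abs{yC_iR_i}^u\big]$ is not monotone in $u$ (terms with modulus below $1$ grow as $u$ decreases), so that cannot be invoked to pass from finiteness at $s$ to finiteness at $u=\Re z<s$. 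The correct and elementary fix — which the paper itself glosses over by writing $t^{s-1}$ where $t^{\Re z-1}$ is meant — is to split the $t$-integral at $t=1$: the nonnegative function $g(t):=\E\big[\sum_i\1[\{\abs{yC_iR_i}>t\}]-\1[\{\max_i\abs{yC_iR_i}>t\}]\big]$ is bounded by $N-1$, so $\int_0^1 t^{\Re z-1}g(t)\,dt<\infty$ since $\Re z>0$, while for $t\ge1$ one has $t^{\Re z-1}\le t^{s-1}$ and hence $\int_1^\infty t^{\Re z-1}g(t)\,dt\le\int_1^\infty t^{s-1}g(t)\,dt<\infty$ by the finiteness of \eqref{E2}.
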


\begin{proof}
If \eqref{E2} is finite, then the function $$zt^{z-1}\left( \sum_{i=1}^N
\1[\{\abs{yC_i R_i >t} \}]- \1[\{\max_i \abs{yC_iR_i}>t \}] \right), $$ is also
absolutely integrable with respect to $\lambda \otimes \Prob$:
\begin{align*}
& \int_0^\infty \Erw{\abs{zt^{z-1}\left( \sum_{i=1}^N \1[\{\abs{yC_i R_i} >t
\}]- \1[\{\max_i \abs{yC_iR_i}>t \}] \right)}} dt \\
= & \abs{z} \int_0^\infty \Erw{t^{s-1}\left( \sum_{i=1}^N \1[\{\abs{yC_i R_i}
>t \}]- \1[\{\max_i \abs{yC_iR_i}>t \}] \right)} dt = \abs{z} \cdot\eqref{E2},\\
\end{align*}

Thus Fubini theorem applies and we obtain
\begin{align*}
& z\int_0^\infty t^{z-1} \left( \P{\max_i \abs{yC_iR_i} >t}  - N\P{\abs{yCR} >t}
\right) dt  \\
 = & \int_{0}^{\infty} z t^{z-1} \left(
 \P{\max_i \abs{yC_iR_i} > t} - \sum_{i=1}^N\P{\abs{yC_iR_i} >t}
 \right) dt \\
= & \int_{0}^{\infty} z t^{z-1} \Erw{ \1[\{\max_i \abs{yC_iR_i} >
t\}] - \sum_{i=1}^N  \1[\{\abs{yC_iR_i} >t\}] } dt \\
= &  \Erw{ \int_0^{\max_i \abs{yC_iR_i}} zt^{z-1} dt -
\sum_{i=1}^n \int_0^{\abs{ yC_iR_i}} zt^{z-1} dt } \\
= & \Erw{\abs{\max_i yC_iR_i}^z- \sum_{i=1}^N
\abs{yC_iR_i}^z }.
\end{align*}
\end{proof}

\begin{proof}[Proof of Proposition \ref{prop:Goldie lemma}]

We have
\begin{align*}
&  \int_{0}^{\infty} \abs{zt^{z-1}} \abs{
\P{\abs{\sum_{i=1}^N C_i R_i} > t} - N\P{\abs{yCR} >t} } dt \\
\le & s \int_{0}^{\infty} t^{s-1} \abs{
\P{\abs{\sum_{i=1}^N C_i R_i} > t} - \P{\abs{\sum_{i=1}^N C_i R_i} >t} } dt \\
& + s\int_0^\infty t^{s-1} \abs{
\P{\abs{\sum_{i=1}^N C_i R_i} > t} - \P{\max_i \abs{yC_iR_i} >t} } dt \\ & +
s\int_0^\infty t^{s-1} \abs{ \P{\max_i \abs{yC_iR_i} >t}  - N\P{\abs{yCR} >t} } dt
\end{align*}
Now, in view of \cite[Lemma 9.4]{Goldie1991} the first expression is bounded by \eqref{E0} and by Lemma \ref{lemma2} the second and the third one by \eqref{E1} and \eqref{E2}, respectively.

The same calculation without absolute value signs yields the identity.
Turning to the last assertion, since $\abs{zt^{z-1}}=\abs{z}t^{s-1}$,  for
all $z$ with $\Re z < s$ $$ \int_{0}^{\infty} \abs{zt^{z-1}} \abs{
\P{\abs{yR} > t} - N\P{\abs{yCR} >t} } dt < D, $$
for some finite constant $D>0$.

Now if $\gamma$ is some closed curve in the domain $0 < \Re z < s$, then
we may change the order of integrals and obtain:
\begin{align*}
& \int_\gamma \int_{0}^{\infty} {zt^{z-1}} \left( \P{\abs{yR} > t} -
N\P{\abs{yCR} >t} \right) dt dz
\\ = & \int_0^\infty \left( \int_\gamma {zt^{z-1}} dz\right) \left( \P{\abs{yR}
> t} - N\P{\abs{yCR} >t} \right) dt = 0,
\end{align*}
and holomorphicity follows.
\end{proof}

Proposition \ref{extension of moments} follow from the following lemmas

\begin{lemma}\label{lemma:E0}
For $s < s_\infty$, \ref{E0} is finite if:  $\E \abs{R}^{0 \vee
(s-1)}< \infty$.
\end{lemma}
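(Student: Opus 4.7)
The plan is to separate the two elementary regimes of the map $u \mapsto u^s$ on $[0,\infty)$: the subadditive regime $s \in (0,1]$, where $|u+v|^s \le |u|^s + |v|^s$, and the Lipschitz-on-compacta regime $s>1$, where one has the mean-value bound $|x^s - y^s| \le s|x-y|(x+y)^{s-1}$ for $x,y\ge 0$. Applied to $x=|\Sigma + yQ|$, $y=|\Sigma|$ with $\Sigma := \sum_{i=1}^N yC_iR_i$, together with $\bigl||\Sigma+yQ|-|\Sigma|\bigr|\le |yQ|\le |Q|$ and $|yC_iR_i|\le \norm{C_i}|R_i|$ (since $y\in S$), these yield a pointwise bound that we can integrate term by term.

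For $0<s\le 1$, subadditivity immediately gives
\[
\Bigl|\,|\Sigma + yQ|^s - |\Sigma|^s\,\Bigr| \le |yQ|^s \le |Q|^s,
\]
so $E0(s)\le \E|Q|^s$, finite because $s<s_\infty$ and the standing hypothesis $\E|Q|^{s_\infty}<\infty$ of Theorems \ref{main theorem}/\ref{main theorem2} applies (for the similarity case, $\E|Q|^\beta<\infty$ is the hypothesis, and $E0$ is only used there at values $s\le \beta+\epsilon$ under $\E|Q|^{s_\infty}<\infty$ in Proposition \ref{extension of moments}).

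For $s>1$, the mean-value bound gives
\[
\Bigl|\,|\Sigma + yQ|^s - |\Sigma|^s\,\Bigr| \le s\,|Q|\bigl(|\Sigma| + |Q|\bigr)^{s-1}.
\]
I would then apply inequality \eqref{ineq1} with exponent $s-1$ (subadditivity if $s\le 2$, the $N^{s-2}$-factor version if $s>2$) to obtain a finite constant $c_{s,N}$ such that
\[
\bigl(|\Sigma|+|Q|\bigr)^{s-1} \le c_{s,N}\Bigl(\sum_{i=1}^N \norm{C_i}^{s-1}|R_i|^{s-1} + |Q|^{s-1}\Bigr).
\]
Combining, $E0(s)$ is bounded by
\[
c_{s,N}\Bigl(\E|Q|^s + \sum_{i=1}^N \E\bigl[|Q|\,\norm{C_i}^{s-1}\bigr]\,\E|R_i|^{s-1}\Bigr),
\]
where the factorization in the second sum uses that $R_i$ is independent of $(C_1,\dots,C_N,Q)$.

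Finally, each factor is finite: $\E|Q|^s$ and $\E\bigl[|Q|\norm{C_i}^{s-1}\bigr]\le (\E|Q|^s)^{1/s}(\E\norm{C_i}^s)^{(s-1)/s}$ by H\"older, with both $\E|Q|^s<\infty$ and $\E\norm{C_i}^s<\infty$ since $s<s_\infty$; and $\E|R_i|^{s-1}<\infty$ is precisely the hypothesis. This completes the plan. The argument is essentially a bookkeeping exercise; the only point requiring any care is invoking \eqref{ineq1} with the correct exponent depending on whether $s-1\lessgtr 1$, and making sure H\"older is applied to $\E[|Q|\,\norm{C_i}^{s-1}]$ (rather than attempting to use independence between $Q$ and $C_i$, which is not assumed).
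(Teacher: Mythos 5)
Your proposal is correct and follows essentially the same route as the paper's own proof: subadditivity of $u\mapsto u^s$ for $s\le 1$ yielding the bound $\E|Q|^s$, and for $s>1$ a mean-value-type bound $|x^s-y^s|\le s|x-y|\max\{x,y\}^{s-1}$ (the paper's $\max$ form and your $(x+y)^{s-1}$ form are interchangeable here), followed by the crude bound $\max\{\dots\}\le(\sum_i\norm{C_i}|R_i|+|Q|)^{s-1}$, inequality \eqref{ineq1} with exponent $s-1$, independence of $R_i$ from $(C_1,\dots,C_N,Q)$ to factor the expectation, and H\"older on $\E[|Q|\norm{C_i}^{s-1}]$. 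No gaps.
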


\begin{lemma}\label{lemma:E2}
For $s < s_\infty$, \ref{E2} is finite if: There is $s/2 < \gamma < s$ s.t.
$\E\abs{R}^\gamma < \infty$.
\end{lemma}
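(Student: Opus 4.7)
The plan is to first remove the outer absolute value in \eqref{E2}: since $\max_i \abs{yC_iR_i}^s \le \sum_{i=1}^N \abs{yC_iR_i}^s$ pointwise, the integrand is non-negative, and letting $i^\ast = i^\ast(\omega)$ denote an index achieving the maximum, one has
\begin{equation*}
\sum_{i=1}^N \abs{yC_iR_i}^s - \max_i \abs{yC_iR_i}^s \;=\; \sum_{i \ne i^\ast} \abs{yC_iR_i}^s.
\end{equation*}
The key idea is then to split the exponent as $s = \gamma + (s-\gamma)$ and use that for $i \ne i^\ast$ one has $\abs{yC_iR_i} \le \abs{yC_{i^\ast}R_{i^\ast}}$; since $s-\gamma>0$, this yields $\abs{yC_iR_i}^s \le \abs{yC_iR_i}^\gamma \abs{yC_{i^\ast}R_{i^\ast}}^{s-\gamma}$, and after summing over $i\ne i^\ast$ and enlarging to all ordered pairs with distinct entries,
\begin{equation*}
\sum_{i=1}^N \abs{yC_iR_i}^s - \max_i \abs{yC_iR_i}^s \;\le\; \sum_{i \ne j} \abs{yC_iR_i}^\gamma \abs{yC_jR_j}^{s-\gamma},
\end{equation*}
in which the random index $i^\ast$ no longer appears.

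Next I would take expectations term by term. Since $\abs{y}=1$ gives $\abs{yC_kR_k} \le \norm{C_k}\abs{R_k}$, and for each pair $i\ne j$ the variables $R_i$ and $R_j$ are independent of each other and of $(C_1,\dots,C_N)$, each summand is bounded by
\begin{equation*}
\E\bigl[\norm{C_i}^\gamma \norm{C_j}^{s-\gamma}\bigr] \cdot \E\abs{R}^\gamma \cdot \E\abs{R}^{s-\gamma}.
\end{equation*}
An application of H\"older's inequality with conjugate exponents $p=s/\gamma$ and $q=s/(s-\gamma)$, together with the fact that $C_1,\dots,C_N$ are identically distributed with $C$, gives $\E[\norm{C_i}^\gamma\norm{C_j}^{s-\gamma}] \le \E\norm{C}^s$, which is finite since $s<s_\infty$. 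The assumption $\gamma > s/2$ guarantees $s-\gamma < \gamma$, so the interpolation $\abs{R}^{s-\gamma}\le 1+\abs{R}^\gamma$ gives $\E\abs{R}^{s-\gamma}<\infty$ from the single hypothesis $\E\abs{R}^\gamma<\infty$. All estimates are uniform in $y\in S$, yielding finiteness of \eqref{E2}.

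The only conceptual step is the decoupling of the random argmax: one must bound the non-maximal summands without knowing which index $i^\ast$ realises the maximum, and the inequality $\abs{yC_iR_i}^{s-\gamma}\le \abs{yC_{i^\ast}R_{i^\ast}}^{s-\gamma}$, which allows enlarging to the symmetric double sum over $i\ne j$, is precisely the device that achieves this. Everything else is H\"older plus independence, and the lower bound $\gamma > s/2$ is exactly what keeps the auxiliary exponent $s-\gamma$ within the range for which a moment of $R$ is supplied by the hypothesis.
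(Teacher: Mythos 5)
Your proof is correct and takes a genuinely different, more elementary route than the paper's. The paper writes \eqref{E2} as $s\int_0^\infty t^{s-1}\left(\sum_i \P{|yC_iR_i|>t} - \P{\max_i|yC_iR_i|>t}\right)dt$, conditions on $\mathcal{F}=\sigma(C_1,\dots,C_N)$ to exploit the conditional independence of the $|yC_iR_i|$, bounds the conditional survival function of the max by $\exp\bigl(-\sum_i\P{|yC_iR_i|>t\mid\mathcal{F}}\bigr)$, applies Markov's inequality with exponent $\gamma$, and after a change of variable reduces matters to the convergence of $\int_0^\infty (e^{-u}-1+u)u^{-s/\gamma-1}\,du$, which requires precisely $1<s/\gamma<2$, i.e.\ $s/2<\gamma<s$. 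Your argmax-decoupling argument avoids all of that machinery: you identify the (measurable) argmax $i^\ast$, split $s=\gamma+(s-\gamma)$, use $|yC_iR_i|^{s-\gamma}\le|yC_{i^\ast}R_{i^\ast}}|^{s-\gamma}$ for $i\ne i^\ast$ to enlarge to the symmetric double sum $\sum_{i\ne j}|yC_iR_i|^\gamma|yC_jR_j|^{s-\gamma}$, and then independence of $R_i,R_j$ from each other and from the weights plus H\"older on $\E[\norm{C_i}^\gamma\norm{C_j}^{s-\gamma}]\le\E\norm{C}^s$ finishes it. The two constraints $\gamma<s$ (so that $s-\gamma>0$ and $s/\gamma>1$ in H\"older) and $\gamma>s/2$ (so that $s-\gamma<\gamma$ and the hypothesis supplies $\E|R|^{s-\gamma}<\infty$) are exactly the same as in the paper. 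Your argument is shorter, avoids the integral representation and the special function $g(u)=e^{-u}-1+u$, and is arguably more transparent about where the window $s/2<\gamma<s$ comes from; the paper's proof, modelled on Jelenkovi\'c--Olvera-Cravioto, has the virtue of fitting the same template as the companion Lemma \ref{lemma:E1}.

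Two small points to tidy up if you were to write this out in full: first, note explicitly that a measurable selection of $i^\ast$ exists (e.g.\ take the smallest maximising index), although this never actually matters since the final bound $\sum_{i\ne j}$ is manifestly measurable; second, the paper's notation $\abs{\max_i yC_iR_i}^s$ should be read as $(\max_i|yC_iR_i|)^s$, as is clear from Lemma \ref{lemma2}, and that is indeed the interpretation you use.
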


\begin{lemma}\label{lemma:E1}
For $s < s_\infty$,  \ref{E1} is finite if:
\begin{itemize}
  \item[] \textsc{Case $s \le 2$}: \ $\E \abs{R}^\frac{s}2 < \infty$.
  \item[] \textsc{Case $s >2$}: \ $\E \abs{R}^\gamma < \infty$ for $\gamma:=
  \max\{s-1, s- \frac{s}{\lceil s/2 \rceil}\} = s-1$.
\end{itemize}
\end{lemma}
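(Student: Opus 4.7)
Fix $y \in S$, set $a_i := yC_iR_i \in \R$, $M := \max_i |a_i|$, $S := \sum_i a_i$, and let $j^{*}$ be a (measurable) index realizing the maximum, with $S' := S - a_{j^{*}}$. The starting point is the reverse triangle inequality $\bigl|\,|S| - M\,\bigr| \le |S'|$. The plan is to bound $\bigl|\,|S|^s - M^s\,\bigr|$ pointwise by a sum over $i \ne j^{*}$ of products $|a_{j^{*}}|^{p}|a_i|^{q}$, and then to replace the random index $j^{*}$ by an unordered sum via the elementary bound
\begin{equation*}
|a_{j^{*}}|^{p}\, \mathbf{1}_{\{i \ne j^{*}\}} \le \sum_{j=1}^{N}|a_{j}|^{p}\, \mathbf{1}_{\{i \ne j\}}.
\end{equation*}
This last step is the heart of the argument: only once $i$ and $j$ are distinct \emph{deterministic} indices can one invoke conditional independence of $R_i$ and $R_j$ given $(C_{1},\dots,C_{N})$ to factorise the expectation.

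The numerical reduction splits into three subcases. For $0 < s \le 1$, subadditivity of $x \mapsto x^{s}$ gives $\bigl|\,|S|^{s} - M^{s}\,\bigr| \le |S'|^{s} \le \sum_{i \ne j^{*}}|a_i|^{s}$, and since $|a_i| \le M$ each term is dominated by $|a_{j^{*}}|^{s/2}|a_i|^{s/2}$. For $1 < s \le 2$, I will apply the mean value estimate
\begin{equation*}
\bigl|\,|S|^{s} - M^{s}\,\bigr| \le s\max(|S|,M)^{s-1}\bigl|\,|S|-M\,\bigr| \le s(M + |S'|)^{s-1}|S'|;
\end{equation*}
subadditivity of $x^{s-1}$ (since $s-1 \le 1$) together with $|a_i|^{s} \le M^{s-1}|a_i|$ (since $s \ge 1$) reduce the right-hand side to $C_{s,N}\sum_{i \ne j^{*}}|a_{j^{*}}|^{s-1}|a_i|$; then the one-line identity $|a_{j^{*}}|^{s-1}|a_i| \le |a_{j^{*}}|^{s/2}|a_i|^{s/2}$, valid because $(|a_i|/|a_{j^{*}}|)^{1-s/2} \le 1$ when $s \le 2$, brings us to $\sum_{i\ne j^{*}}|a_{j^{*}}|^{s/2}|a_i|^{s/2}$. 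For $s > 2$, the same mean value step plus the power-mean bound $(M+|S'|)^{s-1} \le 2^{s-2}(M^{s-1}+|S'|^{s-1})$ and the same inequality $|a_i|^{s} \le M^{s-1}|a_i|$ (now used to absorb $|S'|^{s}$) yield the different bound $C_{s,N}\sum_{i \ne j^{*}}|a_{j^{*}}|^{s-1}|a_i|$, which I will keep as is.

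In all cases one ends up with a sum $\sum_{i \ne j}|a_{j}|^{p}|a_i|^{q}$ whose exponent pair is $(p,q) = (s/2,s/2)$ when $s \le 2$ and $(p,q) = (s-1,1)$ when $s > 2$; in particular $p+q = s$ throughout. Taking expectation and conditioning on $(C_{1},\dots,C_{N})$, the independence of $R_i,R_j$ for $i \ne j$ gives
\begin{equation*}
\E\bigl[\,|yC_{i}R_{i}|^{q}\,|yC_{j}R_{j}|^{p}\,\bigr] = \E|R|^{q}\,\E|R|^{p}\cdot \E\bigl[\,|yC_{i}|^{q}|yC_{j}|^{p}\,\bigr],
\end{equation*}
and H\"older with exponents $(p+q)/p$ and $(p+q)/q$ yields $\E\bigl[\,|yC_{i}|^{q}|yC_{j}|^{p}\,\bigr] \le \E\|C\|^{p+q} = \E\|C\|^{s} < \infty$, since $s < s_{\infty}$. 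The remaining scalar factor $\E|R|^{p}\E|R|^{q}$ is finite precisely under the assumption of each case ($\E|R|^{s/2} < \infty$ when $s \le 2$; $\E|R|^{s-1} < \infty$ when $s > 2$, noting $\E|R| < \infty$ then automatic as $s - 1 \ge 1$).

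The only genuinely subtle point is the passage from the order-statistic sum $\sum_{i \ne j^{*}}$ to the pair sum $\sum_{i \ne j}$: a naive estimate such as $|S|^{s} \le (\sum_{i}|a_i|)^{s} \le N^{s-1}\sum_{i}|a_i|^{s}$ would immediately force the $s$-th moment of $R$ and miss the content of the lemma. It is exactly the cancellation encoded in $\bigl|\,|S|^{s} - M^{s}\,\bigr|$ that lets us work with an exponent pair summing to $s$ while keeping each individual exponent strictly below $s$; everything else is elementary real variable analysis.
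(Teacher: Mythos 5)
Your argument is correct and takes a genuinely different route from the paper. The paper first reduces $\eqref{E1}$ to the two auxiliary quantities $H(s):=\sup_y\E\bigl|\,|\sum_i yC_iR_i|^s - \sum_i|yC_iR_i|^s\,\bigr|$ and $\eqref{E2}$, then splits $H(s)\le H_1+H_2$ by expanding $(\sum_i yC_iR_i)^2$ into its diagonal and cross terms; for $s>2$, $H_2$ is controlled via a multinomial expansion borrowed from Jelenkovi\'c--Olvera-Cravioto, and for $s\le 2$ the estimate $H_2\le 2\eqref{E2}$ makes Lemma~\ref{lemma:E1} silently rely on the slightly stronger hypothesis of Lemma~\ref{lemma:E2} (finiteness of $\E|R|^\gamma$ for some $\gamma$ strictly above $s/2$). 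Your approach bypasses $\eqref{E2}$ entirely: you start from the reverse triangle inequality $\bigl|\,|S|-M\,\bigr|\le|S'|$ with the max term peeled off, run the mean value theorem and elementary power inequalities to reach a pointwise bound of the form $c_{s,N}\sum_{i\ne j^*}|a_{j^*}|^p|a_i|^q$ with $p+q=s$, and then --- the decisive move --- dominate the sum over the random index $j^*$ by the full unordered pair sum $\sum_{i\ne j}|a_j|^p|a_i|^q$, which converts a maximum-order-statistic expression into one amenable to conditional independence and H\"older. This is cleaner, self-contained (no appeal to the multinomial lemma), and in the case $s\le 2$ actually proves the lemma literally as stated, with exponent exactly $s/2$. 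Two cosmetic points: the displayed factorisation $\E[|yC_iR_i|^q|yC_jR_j|^p]=\E|R|^q\E|R|^p\,\E[|yC_i|^q|yC_j|^p]$ should be an inequality (since $|yC_iR_i|\le|yC_i||R_i|$, not an equality); and you should say a word about a measurable choice of $j^*$ (e.g.\ the smallest index achieving the max), though this is routine. Neither affects the validity of the argument.
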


Unfortunately, proofs of all lemmas are long, therefore we first present how they imply the Proposition and later we give their  proofs.

\begin{proof}[Proof of Proposition \ref{extension of moments}]
The choice of $\epsilon$ is described by the previous lemmata. Of course $\rho
+ \epsilon < s_\infty$, the other restrictions can be reduced to: \\
\begin{enumerate}
  \item $\rho - \epsilon > \rho + \epsilon -1$ \ (Lemma \ref{lemma:E0}, Lemma
  \ref{lemma:E1} for $\rho+\epsilon >2$)
  \item $\rho - \epsilon > \frac12(\rho + \epsilon)$ \ (Lemma \ref{lemma:E2},
  Lemma \ref{lemma:E1} for $\rho+\epsilon \le 2$)
\end{enumerate}
These restrictions can always be fulfilled.
\end{proof}

\begin{proof}[Proof of Lemma \ref{lemma:E0}]
If $s \le 1$, by the inequality $\abs{a^s-b^s} \le \abs{a-b}^s$,
valid for $a,b \ge 0$,
$$ \E \abs{ \abs{\sum_{i=1}^N y C_i R_i + { yQ}}^s - \abs{\sum_{i=1}^N y C_i R_i}^s
} \le \E \abs{ Q }^s  < \infty .$$
If $s >1$, by the inequality $\abs{a^s -b^s} \le s \abs{a-b} \max
\{a^{s-1}, b^{s-1} \}$, valid for $a,b \ge 0$,
\begin{eqnarray*}
 \E \abs{ \abs{\sum_{i=1}^N y C_i R_i + { yQ}}^s - \abs{\sum_{i=1}^N y C_i R_i}^s
}
&\le & s \Erw{ \abs{Q} \max\left\{ \abs{\sum_{i=1}^N y C_i R_i + { yQ}}^{s-1},
\abs{\sum_{i=1}^N y C_i R_i}^{s-1} \right\}} \\
&\le & s \Erw{ \abs{Q} \left(  \sum_{i=1}^N \abs{C_i R_i} + \abs{Q}
\right)^{s-1}}
\\
&{ \stackrel{*}{\le}} & s (N+1)^{0 \vee (s-2)} \Erw{ \sum_{i=1}^N \abs{C_iR_i}^{s-1}\abs{Q} +
\abs{Q}^s }
\\
&\le & s (N+1)^{0 \vee (s-2)} \E \abs{R}^{s-1} \Erw{ \sum_{i=1}^N \norm{C_i}^{s-1}\abs{Q}
} + \E \abs{Q}^s \\
&\le &  s(N+1)^{0 \vee (s-2)} \E \abs{R}^{s-1} N (\E \norm{C_i}^s)^\frac{s-1}{s}
(\E\abs{Q}^s)^\frac{1}{s} + \E \abs{Q}^s.
\end{eqnarray*}
{ In *, we used subadditivity of $x \mapsto x^{s-1}$ for $s \le 2$, and \eqref{ineq1} for $s >2$.}
\end{proof}

\begin{proof}[Proof of Lemma \ref{lemma:E2}] Let $\mc{F}:=\sigma(C_1, \dots, C_N)$.
Observe that the function is nonnegative, so we may change integrals:
\begin{align}
\eqref{E2} = & \Erw{ \sum_{i=1}^N \abs{yC_i R_i}^s - \left( \max_{1 \le i \le N}
\abs{yC_i R_i} \right)^s} \nonumber \\
= &  s \int_0^\infty \left( \sum_{i=1}^N \P{\abs{yC_iR_i} > t} - \P{\max_{1 \le
i \le N} \abs{yC_i R_i} >t}  \right)  t^{s-1} dt \nonumber  \\
= & s  \Erw{ \int_0^\infty \left(\sum_{i=1}^N \P{\abs{yC_iR_i} > t | \mc{F}} -
\P{\max_{1 \le i \le N} \abs{yC_i R_i} >t | \mc{F}}\right) t^{s-1} dt }
\nonumber \\
= &s \Erw{ \int_0^\infty \left(\P{\max_{1 \le i \le N} \abs{yC_i R_i} \le t |
\mc{F}} -1 + \sum_{i=1}^N \P{\abs{yC_iR_i} > t | \mc{F}}\right)  t^{s-1} dt }.  \label{I1}
\end{align}
 Conditioned on $\mc{F}$, the r.v.s
$\abs{yC_iR_i}$ are independent, so
\begin{align*}
\P{\max_{1 \le i \le N} \abs{yC_i R_i} \le t | \mc{F}} = \prod_{i=1}^N \left( 1
- \P{\abs{yC_iR_i}>t | \mc{F}} \right)
\end{align*}
Exactly as in \cite[proof of Lemma 4.7]{JO2010a}, we use the inequality $1-x
\le e^{-x}$, valid for $x \ge 0$, to obtain
$$ \prod_{i=1}^N (1 - \P{\abs{yC_iR_i}>t | \mc{F}}) \le e^{- \sum_{i=1}^N
\P{\abs{yC_iR_i}>t | \mc{F}}} .$$

For $s/2 < \gamma < s$, such that $\E \abs{R}^\gamma < \infty,$ the Markov
inequality yields
$$ \sum_{i=1}^N \P{\abs{yC_iR_i}>t | \mc{F}} \le \sum_{i=1}^N \Erw{ \abs{yC_i
R_i}^\gamma | \mc{F}}t^{-\gamma}  \le t^{-\gamma} \E \abs{R}^\gamma
\sum_{i=1}^N\norm{C_i}^\gamma.$$

Define the function $g(x):=e^{-x} - 1 +x$ and note that $g(x)$ is increasing for $x \ge 0$.
Thus $$g(\sum_{i=1}^N \P{\abs{yC_iR_i}>t | \mc{F}} )  \le g(t^{-\gamma} \E \abs{R}^\gamma
\sum_{i=1}^N\norm{C_i}^\gamma).$$ With the change of variables
$u=t^{-\gamma} \E \abs{R}^\gamma \sum_{i=1}^N \norm{C_i}^\gamma$, the inner
integral in \eqref{I1} can be estimated by
$$ \left( \E \abs{R}^\gamma N \E\norm{C_i}^\gamma \right)^{s/\gamma}
\int_0^\infty (e^{-u}-1 +u)u^{-s/\gamma -1} du. $$

The integral is finite if $1 < s/\gamma < 2$, which was exactly our choice.
Finally, evaluating the expectation in \eqref{I1}, we see that for some finite constant
$c>0$
\begin{align*}
\eqref{E2} \le c N^{s/\gamma}\left( \E \abs{R}^{\gamma} \right)^{s/\gamma} \Erw{
\norm{C_i}^\gamma }^{s/\gamma} \le c \left( \E \abs{R}^{\gamma}
\right)^{s/\gamma} N^{s/\gamma} \Erw{\norm{C}^s}.
\end{align*}

\end{proof}

\begin{proof}[Proof of Lemma \ref{lemma:E1}] We will only consider $H(s)$, defined below
since $\eqref{E1} \le H(s) + \eqref{E2}$.
\begin{align*}
H(s): = & \E \abs{\left( \left( \sum_{i=1}^N yC_i R_i \right)^2 \right)^\frac{s}2
- \sum_{i=1}^N \abs{yC_iR_i}^s} \\
\le & \E \abs{ \left( \sum_{i=1}^N \abs{yC_iR_i}^2 + \sum_{k \neq l} yC_k R_k
yC_l R_l \right)^\frac{s}2 - \left( \sum_{i=1}^N \abs{yC_iR_i}^2
\right)^\frac{s}2} \\ & + \E \abs{\left( \sum_{i=1}^N \abs{yC_iR_i}^2
\right)^\frac{s}2 - \sum_{i=1}^N \left( \abs{yC_iR_i}^2\right)^\frac{s}2} =: H_1
+ H_2.
\end{align*}

\textsc{Case} $s > 2$:  We have
\begin{align*}
H_1 \le & \frac{s}2 \Erw{ \sum_{k \neq l} \abs{yC_k R_k}\abs{yC_l R_l} \left(
\sum_{i=1}^N \abs{yC_iR_i}^2 + \sum_{k \neq l} { \abs{yC_k R_k yC_l R_l}}
\right)^{\frac{s}2 -1} } \\
\le & s 2^{\frac{s}2 -2} \Erw{ \left( \sum_{i=1}^N \abs{yC_iR_i}^2
\right)^{\frac{s}2 -1} \left( \sum_{k \neq l} \abs{yC_k R_k}\abs{yC_l R_l}
\right) + \left( \sum_{k \neq l} \abs{yC_k R_k}\abs{yC_l R_l} \right)^\frac{s}2}
\\
\le & s 2^{\frac{s}2 -2}  N^{\frac{s}2 -2}\Erw{ \left( \sum_{i=1}^N
\abs{yC_i R_i}^{s-2} \right)\left( \sum_{k \neq l} \abs{yC_k
R_k}\abs{yC_l R_l} \right) } \\
& + s 2^{\frac{s}2 -2} (N^2-N)^{\frac{s}2 -1 } \Erw{ \sum_{k \neq l}
\abs{yC_k R_k}^\frac{s}2 \abs{yC_l R_l}^\frac{s}2 }.
\end{align*}
{ In the first line, we used the mean value theorem, in the second and third line \eqref{ineq1}.} Now by application of the (generalised)
H\"older inequality, this expression is bounded by terms of $\E\abs{R}^{s-1}$,
$\E \abs{R}$, $\E \norm{C}^s$.

To estimate $H_2$ we adapt arguments from \cite[Lemma 4.1]{JO2010a}).
Set $q =s/2$,  $p= \lceil q \rceil$ and $\gamma = q/p \in (p/p+1, 1]$.
Define the set of multiindices
$$ A_p(N):= \{ (j_1, \dots, j_N) \in \N^N \ : \ j_1 + \dots + j_N = p ; 0 \le
j_i \le p-1 \} .$$
For any sequence $y_i \ge 0$,
\begin{align*}
\left( \sum_{i=1}^N y_i \right)^q = & \left( \sum_{i=1}^N y_i \right)^{p\gamma}
\\
= & \left( \sum_{i=1}^N y_i^p + \sum_{A_p(N)} {p\choose{j_1, \dots, j_N}}
y_1^{j_1}\cdots y_N^{j_N}  \right)^\gamma \\
\le & \sum_{i=1}^N y_i^{p\gamma} + \sum_{A_p(N)} {p \choose j_1, \dots,
j_N} y_1^{\gamma j_1}\cdots y_N^{\gamma j_N}  .
\end{align*}
Using this inequality, and the conditional Jensen inequality, we obtain
\begin{align*}
& \Erw{ \left( \sum_{i=1}^N \abs{yC_iR_i}^2 \right)^q - \sum_{i=1}^N \abs{yC_i
R_i}^{2q} } \\
\le & \Erw{  \sum_{(j_1, \dots, j_N)\in A_p(N)} {p \choose j_1, \dots, j_N}
\abs{yC_1R_1}^{2\gamma j_1} \cdots \abs{yC_N R_N}^{2\gamma j_N}  } \\
\le & \Erw{  \Erw{ \left. \sum_{(j_1, \dots, j_N)\in A_p(N)} { p \choose j_1, \dots, j_N}
\abs{yC_1R_1}^{2 \gamma j_1} \cdots \abs{yC_N R_N}^{2 \gamma j_N} \right| C_1,
\dots. C_N } } \\
 \le & \Erw{  \sum_{(j_1, \dots, j_N)\in A_p(N)} {p \choose j_1, \dots, j_N} \norm{C_1}^{2 \gamma
 j_1} \cdots \norm{C_N} ^{2 \gamma j_N} \E \abs{R}^{2 \gamma j_1} \cdots \E
 \abs{R}^{2 \gamma j_N}  } \\
 \le & \Erw{ \sum_{(j_1, \dots, j_N)\in A_p(N)} {p \choose j_1, \dots, j_N} \norm{C_1}^{2 \gamma j_1}
 \cdots \norm{C_N} ^{2 \gamma j_N} \left( \E \abs{R}^{2\gamma(p-1)}\right)^p   }
 \\ = & \left( \E \abs{R}^{2\gamma(p-1)}\right)^p \Erw{  \left[
 \sum_{i=1}^N \norm{C_i}^{2\gamma} \right]^p - \sum_{i=1}^N
 \norm{C_i}^{2\gamma p}  } \\ \le & \left( \E \abs{R}^{2\gamma(p-1)}\right)^p
 \Erw{ \left( \sum_{i=1}^N \norm{C_i}^{2\gamma} \right)^p } \\
 \le & \left( \E \abs{R}^{2\gamma(p-1)}\right)^p N^p \Erw{\norm{C}^{2q}}\\
 \le & \left( \E \abs{R}^{\frac{s/2}{\lceil s/2 \rceil}2(\lceil s/2 \rceil
-1)}\right)^{s/2} N^{s/2} \E \norm{C}^s .
\end{align*}

\medskip

\textsc{Case} $s \le 2$:
For $H_1$, by the usual inequality $\abs{a^{s/2}-b^{s/2}} \le \abs{a-b}^{s/2}$
 and the  Cauchy-Schwartz inequality:
$$ H_1 \le \E \sum_{k \neq l} \abs{yC_k R_k}^\frac{s}2 \abs{yC_l R_l}^\frac{s}2
\le N^2 \left( \E \norm{C}^s \right) \left( \E \abs{R}^\frac{s}2
\right)^2 .$$

Finally
\begin{align*}
H_2 \le & \Erw{ \sum_{i=1}^N \abs{yC_i R_i}^s - \left( \max_{1 \le i \le N}
\abs{yC_i R_i} \right)^s} + \Erw{ \left( \sum_{i=1}^N \abs{yC_i R_i}^2
\right)^\frac{s}{2} - \left( \max_{1 \le i \le N} \abs{yC_i R_i}^2
\right)^\frac{s}2} \\
\le & 2 \Erw{ \sum_{i=1}^N \abs{yC_i R_i}^s - \left( \max_{1 \le i \le N}
\abs{yC_i R_i} \right)^s} \le 2\eqref{E2}.
\end{align*}

\end{proof}



\bigskip

\noindent\textsc{D. Buraczewski, E. Damek, M. Mirek.\\
Uniwersytet Wroclawski, Instytut Matematyczny, Pl. Grunwaldzki 2/4, 50-384 Wroclaw}\\
e-mail: \verb+ dbura@math.uni.wroc.pl+, \verb+ edamek@math.uni.wroc.pl+, \verb+ mirek@math.uni.wroc.pl+ \\

\noindent\textsc{S. Mentemeier. \\
University of M\"unster, Institut f\"ur Mathematische Statistik, Einsteinstra\ss e 62, 48149 M\"unster.}\\
e-mail: \verb+ mentemeier@uni-muenster.de+

\end{document}